\documentclass{amsart}
\usepackage{amssymb}

\newtheorem{theorem}{Theorem}[section]
\newtheorem{lemma}[theorem]{Lemma}
\newtheorem{proposition}[theorem]{Proposition}
\theoremstyle{remark}
\newtheorem*{remarks}{Remarks}
\newtheorem{remark}[theorem]{Remark}

\numberwithin{equation}{section}
\begin{document}

\title[Combinatorial bases of basic modules for  $C_{n}\sp{(1)}$]
{Combinatorial bases of basic modules for affine Lie algebras $C_{n}\sp{(1)}$}
\author{Mirko Primc and Tomislav \v Siki\' c}
\address{Mirko Primc, University of Zagreb, Faculty of Science, Bijeni\v cka 30, 10000 Zagreb, \mbox{\hskip 7.6em} Croatia}
\email{primc@math.hr}
\address{Tomislav \v{S}iki\'{c}, University of Zagreb, Faculty of Electrical Engineering and Com- \mbox{\hskip 8.5em} puting, Unska 3, 10000 Zagreb,  Croatia}
\email{tomislav.sikic@fer.hr}

\subjclass[2000]{Primary 17B67; Secondary 17B69, 05A19.\\
Partially supported by the Croatian Science Foundation under the project 2634 and by the\\
Croatian Scientific Centre of Excellence QuantixLie.}

\begin{abstract}

J.~Lepowsky and R.~L.~Wilson initiated the approach to combinatorial
Rogers-Ramanujan type identities via vertex operator constructions
of standard (i.e. integrable highest weight) representations of
affine Kac-Moody Lie algebras. A.~Meurman and M.~Primc developed
further this approach for $\mathfrak{sl}(2,\mathbb C)\widetilde{}\ $
by using vertex operator algebras and Verma modules. In this paper
we use the same method to construct combinatorial bases of basic
modules for affine Lie algebras of type $C_{n}\sp{(1)}$ and, as a
consequence, we obtain a series of Rogers-Ramanujan type identities.
A major new insight is a combinatorial parametrization of leading
terms of defining relations for level one standard modules for affine Lie
algebra of type $C_{n}\sp{(1)}$.
\end{abstract}
\maketitle

\section{Introduction}

J.~Lepowsky and R.~L.~Wilson \cite{LW} initiated the approach to
combinatorial Rogers-Ramanujan type identities via vertex operator
constructions of representations of affine Kac-Moody Lie algebras.
In \cite{MP1} this approach is  developed further for
$\mathfrak{sl}(2,\mathbb C)\widetilde{}\ $ by using vertex operator
algebras and Verma modules. In this paper we use the same method to
construct combinatorial bases for basic modules of affine Lie
algebra of type $C_{n}\sp{(1)}$.

The starting point in \cite{MP1} is a PBW spanning set of a standard
(i.e., integrable highest weight) module $L(\Lambda)$ of level $k$,
which is then reduced to a basis by using the relation
$$
x_\theta(z)^{k+1}=0\quad\text{on}\quad L(\Lambda).
$$
In \cite{MP1} this relation was interpreted in terms of vertex
operator algebras and it was proved for any level $k$ standard
module of any untwisted affine Kac-Moody Lie algebra.

After a PBW spanning set is reduced to a basis, it remains to prove
its linear independence. The main ingredient of the proof is a
combinatorial use of relation
$$
x_\theta(z)\tfrac{d}{dz}(x_\theta(z)^{k+1})=
(k+1)x_\theta(z)^{k+1}\tfrac{d}{dz}x_\theta(z)
$$
for the annihilating field $x_\theta(z)^{k+1}$. This relation was
also interpreted in terms of vertex operator algebras.

By following ideas developed in \cite{MP1} and \cite{MP2}, in
\cite{P1} and \cite{P2} a general construction of relations for
annihilating fields is given by using vertex operator algebras, and
by using these relations the problem of constructing combinatorial
bases of standard modules is split into a ``combinatorial part of
the problem'' and a ``representation theory part of the problem ''.

In this paper we use these results to construct combinatorial bases
of basic modules for affine Lie algebras of type $C_{n}\sp{(1)}$.  A
major new insight is a combinatorial parametrization in  \cite{PS} of leading
terms of defining relations for all standard modules for affine Lie
algebra of type $C_{n}\sp{(1)}$. This is, hopefully, an important
step towards a solution of ``combinatorial part of the problem'' of
constructing combinatorial bases of standard modules for affine Lie
algebras.

In first nine sections we give a detailed outline of ideas and
results involved in this approach, we introduce notation and recall
necessary general results from \cite{P1} and \cite{P2}. The results
from \cite{P1} on relations among relations are formulated in
``untwisted setting''---this may alleviate using the results which are quite
technical in ``twisted setting''. In Section~10 we prove Proposition~\ref{P:10.1}
which is the starting point of our construction of combinatorial
basis of the basic module $L(\Lambda_0)$ for affine Lie algebra of
type $C_{n}\sp{(1)}$. In Section~11 we prove linear independence of
combinatorial bases by using the combinatorial result from \cite{PS}
for counting the number of two-embeddings. As a consequence, in Section 12
we obtain a series of combinatorial Rogers-Ramanujan type identities.

We thank Arne Meurman for many stimulating discussions and help in understanding the combinatorics of leading terms.

\section{Vertex algebras and generating fields}

Two formal Laurent series $a(z)=\sum a_n z^{-n-1}$ and $b(z)=\sum
b_n z^{-n-1}$, with coefficients in some associative algebra, are
said to be mutually local if for some non-negative integer $N$
$$
(z_1-z_2)^Na(z_1)b(z_2)=(z_1-z_2)^Nb(z_2)a(z_1).
$$
A vertex algebra $V$ is a vector space equipped with a specified
vector $\mathbf 1 $ called the vacuum vector, a linear operator
$D$ on $V$ called the derivation and a linear map
$$
V\rightarrow (\text{End}\,V)[[z^{-1},z]],\ \ \ \ v\mapsto
Y(v,z)=\sum_{n\in\mathbb Z }v_nz^{-n-1}
$$
satisfying the following conditions for $u,v\in V$:
\begin{eqnarray}
& &u_nv=0\qquad
\text{for}\ n \ \text{sufficiently\ large},\label{E:2.1}\\
& &[D,Y(u,z)]=Y(Du,z)=\frac{d}{dz}Y(u,z),\label{E:2.2}\\
& &Y({\bf 1},z)=\text{id}_V\qquad (\text{the identity operator on}\ V),\label{E:2.3}\\
& &Y(u,z)\mathbf 1  \in (\text{End}\,V)[[z]]\quad \text{and}\quad
\lim_{z\to 0}Y(u,z)\mathbf 1  =u, \label{E:2.4}\\
& &Y(u,z)\text{\ and\ } Y(v,z) \text{\ are mutually
local.}\label{E:2.5}
\end{eqnarray}

Haisheng Li showed \cite{L} that this definition of vertex algebra
is equivalent to the original one given by R.~E.~Borcherds
\cite{B}. A formal Laurent series $Y(u,z)$ is called the vertex
operator (field) associated with the vector (state) $u$, and
(\ref{E:2.4}) gives a state-field correspondence. For coefficients
of vertex operators $Y(u,z)$ and $Y(v,z)$ we have the commutator
formula
\begin{equation}\label{E:2.6}
[u_m,v_n]=\sum_{i\geq 0}\binom{m}{i}(u_i v)_{m+n-i}.
\end{equation}

Let $M$ be a vector space and  $a(z)$ and $b(z)$ two formal
Laurent series with coefficients in $\text{End}\,M$ such that for
each $w\in M$
\begin{equation}\label{E:2.7}
a_m w=0\quad  \text{and} \quad b_m w=0 \qquad \text{for\ }  m
\text{\ sufficiently\ large.}
\end{equation}
Then for each integer $n$ we have a well defined product
\begin{equation}\label{E:2.8}
   a(z)_nb(z)=\text{Res}_{z_1}\left((z_1-z)^n a(z_1)b(z)-
(-z+z_1)^n b(z)a(z_1)\right),
\end{equation}
with the convention that $(z_1-z)^n=z_1^n(1-z/z_1)^n$ denotes a
series obtained by the binomial formula for $(1-\zeta)^n$. If we
think of vertex algebra as a vector space given $\mathbf 1 $, $D$
and multiplications $u_nv$, satisfying
(\ref{E:2.1})--(\ref{E:2.5}), then we can state the theorem on
generating fields due to Haisheng Li \cite{L}:
\begin{theorem}\label{T:2.1}
A family of mutually local formal Laurent series with coefficients
in $\text{End}\,M$, satisfying (\ref{E:2.7}), generates a vertex
algebra with the vacuum $\mathbf 1 =\text{id}_M$, the derivation
$D=\frac{d}{dz}$ and the multiplications $a(z)_nb(z)$.
\end{theorem}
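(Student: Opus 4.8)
The plan is to realize the claimed vertex algebra concretely inside $(\mathrm{End}\,M)[[z^{-1},z]]$. Write $\mathcal F$ for the given family, and let $\overline{\mathcal F}$ be the smallest linear subspace of $(\mathrm{End}\,M)[[z^{-1},z]]$ containing $\mathrm{id}_M$ and every member of $\mathcal F$ and closed under all the products $a(z)_n b(z)$ of \eqref{E:2.8}; note that condition \eqref{E:2.7} is inherited by these products, so they are always defined, and since $\tfrac{d}{dz}a(z)=a(z)_{-2}\,\mathrm{id}_M$ the space $\overline{\mathcal F}$ is automatically stable under $\tfrac{d}{dz}$. Equip $\overline{\mathcal F}$ with $\mathbf 1=\mathrm{id}_M$, $D=\tfrac{d}{dz}$ and $Y(a(z),z_0)=\sum_{n\in\mathbb Z}\bigl(a(z)_n(\,\cdot\,)\bigr)z_0^{-n-1}$. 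Since $\overline{\mathcal F}$ is manifestly generated by $\mathcal F$, it is enough to verify the axioms \eqref{E:2.1}--\eqref{E:2.5}.

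I would first dispatch the axioms that are direct computations with \eqref{E:2.8}. For the truncation \eqref{E:2.1}: if $n\ge N$, where $N$ is a locality exponent for the pair $a,b$, then the two summands under $\mathrm{Res}_{z_1}$ in \eqref{E:2.8} may be matched using $(z_1-z)^Na(z_1)b(z)=(z_1-z)^Nb(z)a(z_1)$, and they cancel, so $a(z)_n b(z)=0$. A direct evaluation of \eqref{E:2.8} gives $\mathrm{id}_M{}_n\,a(z)=\delta_{n,-1}a(z)$, hence $Y(\mathbf 1,z_0)=\mathrm{id}$, which is \eqref{E:2.3}; likewise $a(z)_n\,\mathrm{id}_M=0$ for $n\ge 0$ while $a(z)_{-1-k}\,\mathrm{id}_M=\tfrac1{k!}\bigl(\tfrac{d}{dz}\bigr)^k a(z)$ for $k\ge 0$, so $Y(a(z),z_0)\mathbf 1\in(\mathrm{End}\,M)[[z_0]]$ with value $a(z)$ at $z_0=0$, which is \eqref{E:2.4}. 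For the derivation axiom \eqref{E:2.2} one uses that $\mathrm{Res}_{z_1}$ annihilates total $z_1$-derivatives and that $\tfrac{d}{dz}(z_1-z)^n=-\tfrac{d}{dz_1}(z_1-z)^n$; integration by parts inside \eqref{E:2.8} then gives $\tfrac{d}{dz}\bigl(a(z)_n b(z)\bigr)=\bigl(\tfrac{d}{dz}a(z)\bigr)_n b(z)+a(z)_n\bigl(\tfrac{d}{dz}b(z)\bigr)$, which together with the coefficient identity $(Da)_n=-n\,a_{n-1}$ is precisely $[D,Y(u,z_0)]=Y(Du,z_0)=\tfrac{d}{dz_0}Y(u,z_0)$.

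The substance of the theorem is the locality axiom \eqref{E:2.5}, i.e.\ that any two elements of $\overline{\mathcal F}$ are mutually local. This rests on \textbf{Dong's Lemma}: if $a(z),b(z),c(z)$ are pairwise mutually local, then $a(z)_n b(z)$ and $c(z)$ are mutually local for every $n\in\mathbb Z$. I would prove Dong's Lemma by the standard formal-residue argument: fix an exponent $r$ effecting the locality swaps for the three pairs, substitute the residue formula \eqref{E:2.8} for $a(z)_n b(z)$, multiply the desired identity by a sufficiently large power $(z_2-z_3)^s$, and redistribute this power among the differences $z_1-z_2$, $z_1-z_3$, $z_2-z_3$ by repeated use of $z_2-z_3=(z_2-z_1)+(z_1-z_3)$, so that in each resulting term the operators $a(z_1)$, $b(z_2)$, $c(z_3)$ carry enough relative powers for the three locality relations to commute $c(z_3)$ past $a(z)_n b(z)$; resumming then yields the claim. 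Granted Dong's Lemma, \eqref{E:2.5} for $\overline{\mathcal F}$ follows by induction on its construction: the members of $\mathcal F$ are pairwise mutually local by hypothesis, $\mathrm{id}_M$ is mutually local with every field (with $N=0$), linear combinations of pairwise-local fields are pairwise local, and Dong's Lemma propagates pairwise locality through every product $a(z)_n b(z)$ (in particular through $a(z)_{-2}\,\mathrm{id}_M=\tfrac{d}{dz}a(z)$). With \eqref{E:2.1}--\eqref{E:2.5} in hand, $\overline{\mathcal F}$ is a vertex algebra in the sense of the definition above, and it is generated by $\mathcal F$ with the stated vacuum, derivation and multiplications.

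The one genuinely delicate step is Dong's Lemma: the bookkeeping with the two opposite binomial conventions $(z_1-z)^n$ and $(-z+z_1)^n$ occurring in \eqref{E:2.8}, and the verification that the chosen exponent $s$ is large enough to license every required commutation in every term, is the part that is not mechanical. The remaining axioms reduce to routine manipulation of formal residues and binomial coefficients.
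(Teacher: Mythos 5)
The paper offers no proof of this theorem---it is quoted from Haisheng Li's work \cite{L}---and your argument is precisely the standard proof given there: realize the generated vertex algebra as the smallest subspace of $(\mathrm{End}\,M)[[z^{-1},z]]$ containing $\mathrm{id}_M$ and the given family and closed under the products \eqref{E:2.8}, verify \eqref{E:2.1}--\eqref{E:2.4} by direct formal-residue computations, and obtain the locality axiom \eqref{E:2.5} by induction using Dong's Lemma. Your outline is correct, and you rightly single out the proof of Dong's Lemma (the bookkeeping with the two binomial conventions and the choice of the exponent $s$) as the only step demanding real care.
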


Vertex operator algebra (see \cite{FLM}) is a vertex algebra $V$
with a conformal vector $\omega$ such that $Y(\omega,z)=\sum
L_nz^{-n-2}$ gives the Virasoro algebra operators $L_n\,$, with
$L_{-1}=D$. It is also required that $L_0$ defines a $\mathbb
Z$-grading $V=\coprod V_n$ truncated from below with
finite-dimensional eigenspaces $V_n$.

For $u\in V_n$ we write $\text{wt\,}u=n$. We shall sometimes use
another convention for writing coefficients of vertex operators,
$$
Y(u,z)=\sum_{n\in\mathbb Z }u(n)z^{-n-\text{wt\,}u},
$$
so that $u(n)$ is a homogeneous operator on the graded space $V$
of degree $n$.

For a vertex operator algebra $V$ we have a vertex operator
algebra structure on $V\otimes V$ with fields $Y(u\otimes
v,z)=Y(u,z)\otimes Y(v,z)$ and the conformal vector $\omega\otimes
\mathbf 1 +\mathbf 1 \otimes\omega$ (see \cite{FHL}).

\section{Vertex algebras for affine Lie algebras}

Let ${\mathfrak g}$ be a simple complex Lie algebra, $\mathfrak h$
a Cartan subalgebra of ${\mathfrak g}$ and $\langle \ , \ \rangle$
a symmetric invariant bilinear form on ${\mathfrak g}$. Via this
form we identify $\mathfrak h$ and $\mathfrak h^*$ and we assume
that $\langle \theta , \theta \rangle=2$ for the maximal root
$\theta$ (with respect to some fixed basis of the root system).
Set
$$
\hat{\mathfrak g} =\coprod_{j\in\mathbb Z}{\mathfrak g}\otimes
t^{j}+\mathbb C c, \qquad \tilde{\mathfrak g}=\hat{\mathfrak
g}+\mathbb C d.
$$
Then $\tilde{\mathfrak g}$ is the associated untwisted affine
Kac-Moody Lie algebra (cf. \cite{K}) with the commutator
$$
[x(i),y(j)]=[x,y](i+j)+i\delta_{i+j,0}\langle x,y\rangle c.
$$
Here, as usual, $x(i)=x\otimes t^{i}$ for $x\in{\mathfrak g}$ and
$i\in\mathbb Z$, $c$ is the canonical central element, and
$[d,x(i)]=ix(i)$. Sometimes we shall denote ${\mathfrak g}\otimes
t^{j}$ by ${\mathfrak g}( {j})$. We identify ${\mathfrak g}$ and
${\mathfrak g}(0)$. Set
$$
\tilde{\mathfrak g}_{<0} =\coprod_{j<0}{\mathfrak g}\otimes
t^{j},\qquad \tilde{\mathfrak g}_{\leq 0} =\coprod_{j\leq
0}{\mathfrak g}\otimes t^{j}+\mathbb C d,\qquad\tilde{\mathfrak
g}_{\geq 0} =\coprod_{j\geq 0}{\mathfrak g}\otimes t^{j}+\mathbb C
d.
$$

For $k\in\mathbb C$ denote by $\mathbb C v_k$ the one-dimensional
$(\tilde{\mathfrak g}_{\geq 0}+\mathbb C c)$-module on which
$\tilde{\mathfrak g}_{\geq 0}$ acts trivially and $c$ as the
multiplication by $k$. The affine Lie algebra $\tilde{\mathfrak
g}$ gives rise to the vertex operator algebra (see \cite{FZ} and
\cite{L}, here we use the notation from \cite{MP1})
$$
N(k\Lambda_0)=U(\tilde{\mathfrak
g})\otimes_{U(\tilde{\mathfrak g}_{\geq 0}+\mathbb C c)}\mathbb C
v_k
$$
for level $k\neq -g^\vee$, where $g^\vee$ is the dual Coxeter
number of ${\mathfrak g}$; it is generated by the fields
\begin{equation}\label{E:3.1}
x(z)=\sum_{n\in\mathbb Z}x_nz^{-n-1},\qquad x\in{\mathfrak g},
\end{equation}
where we set $x_n=x(n)$ for $x\in{\mathfrak g}$. By the
state-field correspondence we have
$$
x(z)=Y(x(-1)\mathbf 1 ,z)\quad\text{for}\ x\in{\mathfrak g}.
$$
The $\mathbb Z$-grading is given by $L_0=-d$.

From now on we shall fix the level $k\in\mathbb Z_{>0}$, and we
shall often denote by $V$ the vertex operator algebra structure on
the generalized Verma $\tilde{\mathfrak g}$-module
$N(k\Lambda_0)$.

\section{A completion of the enveloping algebra}

Let $\mathcal U=U(\hat{\mathfrak g})/(c-k)$, where
$U(\hat{\mathfrak g})$ is the universal enveloping algebra of
$\hat{\mathfrak g}$ and $(c-k)$ is the ideal generated by the
element $c-k$. Note that $\tilde{\mathfrak g}$-modules of level
$k$ are $\mathcal U$-modules. Note that $U(\hat{\mathfrak g})$ is
graded by the derivation $d$, and so is the quotient $\mathcal U$.
Let us denote the homogeneous components of the graded algebra
$\mathcal U$ by $\mathcal U(n)$, $n\in \mathbb Z$. We take
\begin{equation}\label{E:4.1}
W_p(n)=\sum_{i\geq p} \mathcal U(n-i)\mathcal U(i),\qquad
p\in\mathbb Z_{>0} \ ,
\end{equation}
to be a fundamental system of neighborhoods of $0\in \mathcal
U(n)$. It is easy to see that we have a Hausdorff topological
group $(\mathcal U(n),+)$, and we denote by $\overline{\mathcal
U(n)}$ the corresponding completion, introduced in \cite{FZ} (cf.
also \cite{H}, \cite{FF}, and \cite{KL}). Then
$$
\overline{\mathcal U}=\coprod_{n\in \mathbb Z}\overline{\mathcal
U(n)}
$$
is a topological ring.

The definition (\ref{E:4.1}) of a fundamental system of
neighborhoods is so designed that the product $a(z)_nb(z)$ of two
formal Laurent series with coefficients in $\overline{\mathcal U}$
is well defined by the formula (\ref{E:2.8}). Haisheng Li's
arguments in the proof of Theorem~\ref{T:2.1} apply literally and
we have:
\begin{proposition}\label{P:4.1}
A family of mutually local formal Laurent series (\ref{E:3.1})
with coefficients in $\overline{\mathcal U}$ generates a vertex
algebra $V'$ with the vacuum $1\in \overline{\mathcal U}$, the
derivation $D=\frac{d}{dz}$ and the multiplications $a(z)_nb(z)$.
Moreover, the linear map
$$
Y\colon x(-1)\mathbf 1 \mapsto x(z)\qquad\text{for}\
x\in{\mathfrak g}
$$
extends uniquely to an isomorphism \ $Y\colon V\to V'$  of vertex
operator algebras.
\end{proposition}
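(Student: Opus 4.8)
The plan is to apply Proposition~\ref{P:4.1}'s predecessor, Theorem~\ref{T:2.1} (in its $\overline{\mathcal U}$-adapted form), to the family of fields $x(z)$, $x\in{\mathfrak g}$, and then identify the resulting vertex algebra $V'$ with $V=N(k\Lambda_0)$ via a universal property. First I would check that the fields $x(z)$ in \eqref{E:3.1} are mutually local: this follows from the affine commutator relation $[x(i),y(j)]=[x,y](i+j)+i\delta_{i+j,0}\langle x,y\rangle c$, which gives $(z_1-z_2)^2\,[x(z_1),y(z_2)]=0$, so $N=2$ works. I also need \eqref{E:2.7}, i.e. that on any level $k$ module (in particular on the coefficients acting inside $\overline{\mathcal U}$) the operators $x_m$ annihilate a given element for $m$ large; in $\overline{\mathcal U}$ this is precisely the point of the topology built from \eqref{E:4.1}, so the products $a(z)_n b(z)$ are well defined by \eqref{E:2.8}. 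Hence Li's construction produces a vertex algebra $V'\subseteq\overline{\mathcal U}[[z^{-1},z]]$ with vacuum $1$, derivation $\tfrac{d}{dz}$, and the multiplications $a(z)_n b(z)$, generated by the $x(z)$.

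Next I would construct the isomorphism. The key is that $N(k\Lambda_0)=U(\tilde{\mathfrak g})\otimes_{U(\tilde{\mathfrak g}_{\ge0}+\mathbb C c)}\mathbb C v_k$ is the \emph{generalized Verma module}, so it has a universal property: any $\tilde{\mathfrak g}$-module of level $k$ with a cyclic vector killed by $\tilde{\mathfrak g}_{\ge0}$ is a quotient of it. In $V'$ the vector $1$ is such a vector for the action of the modes $x_n$: indeed $x_n\cdot 1 = x(z)_n 1$, and by \eqref{E:2.8} together with $b(z)=1$ (whose only nonzero coefficient is in degree giving $\mathrm{id}$), the nonnegative modes annihilate $1$ while $c$ acts as $k$. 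This yields a $\tilde{\mathfrak g}$-module, hence $\mathcal U$-module, homomorphism $\varphi\colon N(k\Lambda_0)\to V'$ sending $v_k\mapsto 1$, equivalently $x(-1)\mathbf 1\mapsto x(z)$. One checks $\varphi$ intertwines the two vertex algebra structures: it sends $Y(u,z)$-modes to $a(z)$-modes because both sides are built from the generating fields by iterated products $u\mapsto a(z)$ and $u_n v\mapsto a(z)_n b(z)$, which are matched by construction and by the uniqueness in the reconstruction/state-field correspondence (this is the standard argument that a vertex algebra generated by fields is determined by those fields).

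Surjectivity of $\varphi$ is immediate since the $x(z)$ generate $V'$ as a vertex algebra and they lie in the image. For injectivity I would compare graded dimensions, or more directly invoke the fact that $N(k\Lambda_0)$, being freely generated over $U(\tilde{\mathfrak g}_{<0})$ by PBW, has no proper submodule meeting the top degree, while $\varphi$ is nonzero on $v_k$; alternatively one exhibits a left inverse by evaluating fields at the vacuum, using \eqref{E:2.4}: the map $a(z)\mapsto \lim_{z\to 0}a(z)\cdot 1$ composed with $\varphi$ is the identity on the cyclic generator and intertwines $D$, hence is the identity. Finally, uniqueness of the extension of $Y\colon x(-1)\mathbf 1\mapsto x(z)$ to a vertex algebra isomorphism follows because $V=N(k\Lambda_0)$ is generated as a vertex algebra by the $x(-1)\mathbf 1$, so any two vertex algebra homomorphisms agreeing on them agree everywhere.

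The main obstacle, and the only place genuine care is needed, is verifying that $\varphi$ is a homomorphism of vertex \emph{algebras} and not merely of $\tilde{\mathfrak g}$-modules: one must know that the abstract products $u_n v$ in $N(k\Lambda_0)$ correspond under $\varphi$ to the concrete products $a(z)_n b(z)$ in $\overline{\mathcal U}$. This is exactly the content of Li's reconstruction theorem applied on both sides — in $V$ the state-field correspondence already identifies $Y(u,z)$ with the field built from the generators by the $_n$-products, and the same recipe defines $V'$ — so the verification reduces to matching the recipes on generators, where it holds by definition. Everything else (locality, the truncation condition, surjectivity, uniqueness) is routine given the general machinery recalled in Sections~2–4.
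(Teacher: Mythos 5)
Your overall route is the intended one: the paper itself offers no written proof beyond the remark that Li's arguments for Theorem~\ref{T:2.1} apply literally once the truncation condition (\ref{E:2.7}) is replaced by convergence in the topology (\ref{E:4.1}), together with the observation that the resulting map $Y\colon V\to\overline{\mathcal U}[[z,z^{-1}]]$ coincides with the Frenkel--Zhu construction. Your verification of locality (with $N=2$, correct, since $[x(z_1),y(z_2)]$ is a combination of $\delta(z_1-z_2)$ and its first derivative), your use of the universal property of the generalized Verma module to get a surjection $\varphi\colon N(k\Lambda_0)\to V'$, and your reduction of the homomorphism property to matching the generating fields are all in line with what the authors have in mind.

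The one step that does not survive scrutiny is injectivity. Your first argument --- that $N(k\Lambda_0)$ ``has no proper submodule meeting the top degree, while $\varphi$ is nonzero on $v_k$'' --- only shows $v_k\notin\ker\varphi$; it does not exclude $\ker\varphi$ being a nonzero submodule avoiding the top degree, and for integral $k>0$ such submodules exist (e.g.\ $N^1(k\Lambda_0)$, generated by the singular vector $x_\theta(-1)^{k+1}\mathbf 1$, which is exactly the kind of vector whose image one must check is nonzero). The graded-dimension comparison is circular, since the graded dimension of $V'$ is what is at stake. Your third suggestion, a left inverse by ``evaluating at the vacuum,'' is the right idea but is ill-posed as written: for $a(z)\in V'$ the expression $a(z)\cdot 1$ with $1\in\overline{\mathcal U}$ is just the two-sided series $a(z)$ itself, and $\lim_{z\to 0}$ does not exist. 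The repair is to evaluate not in $\overline{\mathcal U}$ but on the module $N(k\Lambda_0)$: every coefficient of every $a(z)\in V'$ lies in $\overline{\mathcal U}$ and hence acts on $N(k\Lambda_0)$, one checks inductively on the $_n$-product construction that $a(z)v_k\in N(k\Lambda_0)[[z]]$, and then $a(z)\mapsto \lim_{z\to 0}a(z)v_k$ composed with $\varphi$ is a degree-preserving $\tilde{\mathfrak g}_{<0}$-equivariant map fixing $v_k$, hence the identity. With that correction the proof is complete and agrees with the paper's (implicit) argument.
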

The map
\begin{equation*}
Y \colon V \rightarrow \overline{\mathcal U}\,[[z,z^{-1}]],\qquad
v\mapsto Y(v,z)=\sum_{n\in\mathbb Z}v_nz^{-n-1},
\end{equation*}
was first constructed by I.~B.~Frenkel and Y.~Zhu in
\cite[Definition 2.2.2]{FZ} by using another method. From now on
we shall consider the coefficients $v_n$ of $Y(v,z)$ for $v\in V$
as elements in the completion $\overline{\mathcal U}$. Then for
any highest weight $\tilde{\mathfrak g}$-module $M$ of level $k$
the elements $v_n\in\overline{\mathcal U}$ act on $M$, defining a
representation of the vertex operator algebra $V$ on $M$.

By following the notation in \cite{FF} we set
$$
U_{\text{loc}}=\mathbb C\text{-span}\{v_n \mid v \in V, n \in
\mathbb Z\}\subset\overline{\mathcal U},
$$
 {}From the
commutator formula (\ref{E:2.6}) we see that $U_{\text{loc}}$ is a
Lie algebra. Let us denote by $U$ the associative subalgebra of
$\overline{\mathcal U}$ generated by $U_{\text{loc}}$. By
construction we have $\mathcal U\,\subset U$. Clearly
$$
U=\coprod_{n\in\mathbb Z} U(n),
$$
where $U(n)\subset U$ is the homogeneous subspace of degree $n$.

\section{Annihilating fields of standard modules}

For the fixed positive integer level $k$ the generalized Verma
$\tilde{\mathfrak g}$-module $N(k\Lambda_0)$ is  reducible, and we
denote by $N^1(k\Lambda_0)$ its maximal $\tilde{\mathfrak
g}$-submodule. By \cite[Corollary 10.4]{K} the submodule
$N^1(k\Lambda_0)$ is generated by the singular vector
$x_\theta(-1)^{k+1}\mathbf 1$, where $x_\theta$ is a root vector
in $\mathfrak g$. Set
$$
R=U(\mathfrak g)x_\theta(-1)^{k+1}\mathbf 1,\qquad \bar R =
\mathbb C\text{-span}\{r_n \mid r \in R, n \in \mathbb Z\}.
$$
Then $R\subset N^1(k\Lambda_0)$ is an irreducible $\mathfrak
g$-module, and $\bar R\subset U$ is the corresponding loop
$\tilde{{\mathfrak g}}$-module for the adjoint action given by the
commutator formula (\ref{E:2.6}).

We have the following theorem (see \cite{DL}, \cite{FZ}, \cite{L},
\cite{MP1}):
\begin{theorem}\label{T:5.1}
 Let $M$ be a highest weight $\tilde{\mathfrak
g}$-module of level $k$. The following are equivalent:
\begin{enumerate}
\item $M$ is a standard module,
\item $\bar R$ annihilates $M$.
\end{enumerate}
\end{theorem}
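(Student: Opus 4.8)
The plan is to prove the two implications separately, using the identification of $R$ with the maximal submodule $N^1(k\Lambda_0)$ and the fact established in Section~5 (via \cite[Corollary 10.4]{K}) that $N^1(k\Lambda_0)$ is generated as a $\tilde{\mathfrak g}$-module by the singular vector $x_\theta(-1)^{k+1}\mathbf 1$, with $R = U(\mathfrak g)x_\theta(-1)^{k+1}\mathbf 1$ the finite-dimensional $\mathfrak g$-module it spans.

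First I would prove $(2)\Rightarrow(1)$. Suppose $\bar R$ annihilates $M$; since $M$ is a highest weight module of level $k$, it is a quotient of $N(k\Lambda')$ for some dominant integral-on-$\mathfrak h$ weight, and in particular a $\mathcal U$-module on which the coefficients $v_n$ of fields $Y(v,z)$ act. The operators $r_n$ for $r\in R$, $n\in\mathbb Z$, are precisely (up to the state-field correspondence and the commutator formula \eqref{E:2.6}) the modes generating the submodule of $N(k\Lambda_0)$ generated by $x_\theta(-1)^{k+1}\mathbf 1$; more precisely, acting by the $r_n$ with $r = u\cdot x_\theta(-1)^{k+1}\mathbf 1$, $u\in U(\mathfrak g)$, on the highest weight vector of $M$ produces a copy of the image of $N^1(k\Lambda_0)$. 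So if all $r_n$ kill $M$, then in particular the relation $x_\theta(z)^{k+1}=0$ holds on $M$ (this is the $n$-th coefficient statement for $r = x_\theta(-1)^{k+1}\mathbf 1$), hence $M$ is a quotient of $N(k\Lambda_0)/N^1(k\Lambda_0) \cong L(k\Lambda_0)$ — wait, more carefully: $M$ highest weight of level $k$ with $\bar R$ annihilating it means the defining ideal of $L(\Lambda)$ already kills $M$, so $M$ is integrable, i.e. standard. The clean way is: the Frenkel–Zhu / \cite{DL},\cite{L} argument shows that a level $k$ highest weight module is standard iff the image of the singular vector $x_\theta(-1)^{k+1}\mathbf 1$ and all its $\tilde{\mathfrak g}$-translates vanish, which is exactly the vanishing of all $r_n$.

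For $(1)\Rightarrow(2)$: if $M$ is standard, it is integrable, so $x_\theta$ acts locally nilpotently; the standard $\mathfrak{sl}(2)$-argument (as in \cite{K}, using the representation theory of the subalgebra generated by $x_\theta(z)$, $x_{-\theta}(z)$) gives $x_\theta(z)^{k+1}=0$ on $M$, i.e. $(x_\theta(-1)^{k+1}\mathbf 1)_n$ annihilates $M$ for all $n$. Since $\bar R$ is spanned by the modes $r_n$ with $r$ ranging over $R = U(\mathfrak g)x_\theta(-1)^{k+1}\mathbf 1$, and the $U(\mathfrak g)$-action on $R$ is implemented inside $U$ by brackets with the zero-modes $x_0$, $x\in\mathfrak g$ (which preserve the space of operators annihilating $M$, since $M$ is a $\mathfrak g$-module), it follows that every $r_n$ kills $M$. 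I expect the main obstacle to be the bookkeeping identifying the abstract modes $r_n\in U\subset\overline{\mathcal U}$ with concrete annihilating operators on $M$ — i.e.\ making precise that the loop-module structure on $\bar R$ from \eqref{E:2.6} matches the submodule structure of $N^1(k\Lambda_0)$, and that ``$r_n$ kills $M$ for all $r\in R$, $n\in\mathbb Z$'' is genuinely equivalent to ``$M$ is a quotient of $N(k\Lambda_0)$'' rather than just implied by it; this is where one invokes the cited literature rather than reproving it.
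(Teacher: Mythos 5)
The paper gives no proof of Theorem~\ref{T:5.1} at all; it is quoted from the literature (\cite{DL}, \cite{FZ}, \cite{L}, \cite{MP1}), so your proposal can only be judged on its own terms. Your direction $(1)\Rightarrow(2)$ is essentially right and is the standard argument: integrability gives $x_\theta(z)^{k+1}=0$ on $M$ by the $\mathfrak{sl}(2,\mathbb C)$-theory for the root $\theta$, and the commutator formula (\ref{E:2.6}) gives $[x_0,r_n]=(x_0r)_n$ for $x\in\mathfrak g$, so the set of states all of whose modes kill $M$ is stable under the zero-mode action of $\mathfrak g$; since $R=U(\mathfrak g)x_\theta(-1)^{k+1}\mathbf 1$, every $r_n$ with $r\in R$ then annihilates $M$.

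The weak point is $(2)\Rightarrow(1)$. As written it is circular at the decisive step: ``$\bar R$ annihilating $M$ means the defining ideal of $L(\Lambda)$ already kills $M$, so $M$ is integrable, i.e.\ standard'' assumes exactly what must be proved, namely that the highest weight $\Lambda$ of $M$ is dominant integral and that $M$ is irreducible (a highest weight module of level $k$ is a quotient of an arbitrary Verma module $M(\Lambda)$ with $\Lambda(c)=k$, with no a priori integrality). The genuine content to be imported from the references is: (a) from the vanishing of the modes $r_n$ on $M$ --- in particular $x_\theta(-1)^{k+1}v_\Lambda=0$, the constant term of $x_\theta(z)^{k+1}v_\Lambda$ for a highest weight vector $v_\Lambda$ --- one deduces that $\Lambda$ is dominant integral, and (b) the identity $\bar R\,M(\Lambda)=M^1(\Lambda)$ recorded just after the theorem, which rules out proper intermediate quotients and forces $M\cong L(\Lambda)$. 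You do say at the end that this is where one invokes the cited literature rather than reproving it, which is precisely the level at which the paper itself treats the statement; just be aware that the hard implication lives entirely in that citation and is not established by the sentence quoted above.
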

This theorem implies that for a dominant integral weight
${\Lambda}$ of level ${\Lambda}(c) = k$ we have
$$
\bar RM({\Lambda}) = M^1({\Lambda}),
$$
where $M^1({\Lambda})$ denotes the maximal submodule of the Verma
$\tilde{\mathfrak g}$-module $M({\Lambda})$. Furthermore, since
$R$ generates the vertex algebra ideal $N^1(k\Lambda_0)\subset V$,
vertex operators $Y(v,z)$, $v\in N^1(k\Lambda_0)$, annihilate all
standard $\tilde{\mathfrak g}$-modules
$L({\Lambda})=M({\Lambda})/M^1({\Lambda})$ of level $k$.

We shall call the elements $r_n\in\bar R$ {\it relations} (for
standard modules), and $Y(v,z)$, $v\in N^1(k\Lambda_0)$, {\it
annihilating fields} (of standard modules). It is clear that the
field
$$
Y(x_\theta(-1)^{k+1}\mathbf 1,z)=x_\theta(z)^{k+1}
$$
generates all annihilating fields.

\section{Tensor products and induced representations}

The vertex operator algebra $V$ has a Lie algebra structure with
the commutator
\begin{equation}\label{E:6.1}
[u,v]=u_{-1}v-v_{-1}u=\sum_{n\geq 0}(-1)^nD^{(n+1)}(u_nv),
\end{equation}
and $\tilde{\mathfrak g}_{<0}\mathbf 1$ is a Lie subalgebra.
Moreover, the map
$$
\tilde{\mathfrak g}_{<0}\mathbf 1\to\tilde{\mathfrak
g}_{<0},\qquad u\mapsto u_{-1},
$$
is a Lie algebra isomorphism and we have the ``adjoint'' action
$$
u_{-1}\colon v\mapsto [u,v]
$$
of the Lie algebra $\tilde{\mathfrak g}_{<0}$ on $V$. Since
$L_{-1}$, $L_{0}$ and $y_0$, $y\in {\mathfrak g}$, are derivations
of the product $u_{-1}v$, they are also derivations of the bracket
$[u,v]$, and we can extend the ``adjoint'' action of the Lie
algebra $\tilde{\mathfrak g}_{<0}$ on $V$ to the ``adjoint''
action of the Lie algebra
$$
\mathbf C L_{-1}\ltimes\tilde{\mathfrak g}_{\leq 0}\cong
\big(\mathbf C L_{-1}+\mathbf C L_{0}+{\mathfrak g}(0)\big)\ltimes
\tilde{\mathfrak g}_{<0}\mathbf 1.
$$

The subspace
$$
\bar R\,\mathbf 1=\coprod_{i=0}^n D^iR \subset V
$$
is a $\tilde{\mathfrak g}_{\geq 0}$-submodule invariant for the
action of $D=L_{-1}$. Then the right hand side of (\ref{E:6.1})
implies that $\bar R\,\mathbf 1$ is invariant for the ``adjoint''
action of $\mathbf C L_{-1}\ltimes\tilde{\mathfrak g}_{\leq 0}$,
we shall denote it by $(\bar R\,\mathbf 1)_\text{ad}$.

Hence we have the induced $\tilde{\mathfrak g}$-module
$U(\tilde{\mathfrak g})\otimes_{U(\tilde{\mathfrak g}_{\geq
0}+\mathbb C c)}\bar R\,\mathbf 1$ and the tensor product $(\bar
R\,\mathbf 1)_\text{ad}\otimes V$ of $(\mathbf C
L_{-1}\ltimes\tilde{\mathfrak g}_{\leq 0})$-modules, and we have
two maps
\begin{equation*}
\begin{array}{ll}
\Psi \colon U(\tilde{\mathfrak g})\otimes_{U(\tilde{\mathfrak
g}_{\geq 0}+\mathbb C c)} \bar R\,\mathbf 1
\to N(k\Lambda_0),\qquad &u\otimes w \mapsto uw,\\
{}&{}\\
\Phi \colon (\bar R\,\mathbf 1)_\text{ad} \otimes V\to V,\qquad
&u\otimes w \mapsto u_{-1}w.
\end{array}
\end{equation*}
Note that the map $\Psi $ is a homomorphism of $\tilde{\mathfrak
g}$-modules, and that $\Psi$ intertwines the actions of $L_{-1}$
and $L_{0}$. Hence, by restriction, $\Psi$ is a $(\mathbf C
L_{-1}\ltimes\tilde{\mathfrak g}_{\leq 0})$-module map. The
following theorem relates $\ker \Phi$ with induced representations
of $\tilde{\mathfrak g}$:

\begin{theorem}\label{T:6.1} (i) There is a unique isomorphism of \ $(\mathbf C
L_{-1}\ltimes\tilde{\mathfrak g}_{\leq 0})$-modules
$$
\Xi \colon (\bar R\,\mathbf 1)_\text{ad}\otimes V\to
U(\tilde{\mathfrak g})\otimes_{U(\tilde{\mathfrak g}_{\geq
0}+\mathbb C c)} \bar R\,\mathbf 1
$$
such that \  $\Xi(w\otimes \mathbf 1)= 1\otimes w$ \ for all $w\in
\bar R\,\mathbf 1$.

(ii) The map $\Phi$ is is a homomorphism of $(\mathbf C
L_{-1}\ltimes\tilde{\mathfrak g}_{\leq 0})$-modules and
$\Phi=\Psi\circ \Xi$. In particular, $\ker \Phi$ is a $(\mathbf C
L_{-1}\ltimes\tilde{\mathfrak g}_{\leq 0})$-module and
$$
\Xi(\ker \Phi)=\ker \Psi.
$$
\end{theorem}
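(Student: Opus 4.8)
The plan is to realise both sides as modules induced from a common submodule. Set $\mathfrak p=\mathbf C L_{-1}\ltimes\tilde{\mathfrak g}_{\leq 0}$ and $\mathfrak q=\mathbf C L_{-1}+\mathbf C L_{0}+{\mathfrak g}(0)$; one readily checks that $\mathfrak q$ is a subalgebra of $\mathfrak p$, that $\tilde{\mathfrak g}_{<0}$ is an ideal, and that $\mathfrak p=\mathfrak q\ltimes\tilde{\mathfrak g}_{<0}$, so $U(\mathfrak p)$ is free as a right $U(\mathfrak q)$-module by PBW. I would first verify that the target $M:=U(\tilde{\mathfrak g})\otimes_{U(\tilde{\mathfrak g}_{\geq 0}+\mathbb C c)}\bar R\,\mathbf 1$ indeed carries a $\mathfrak p$-module structure: since $\bar R\,\mathbf 1$ is stable under $\tilde{\mathfrak g}_{\geq 0}$ and under $D=L_{-1}$, and since $\mathbf C L_{-1}\ltimes(\tilde{\mathfrak g}_{\geq 0}+\mathbb C c)$ and $\mathbf C L_{-1}\ltimes\tilde{\mathfrak g}$ are Lie algebras (using $[L_{-1},d]=L_{-1}$), the module $U(\mathbf C L_{-1}\ltimes\tilde{\mathfrak g})\otimes_{U(\mathbf C L_{-1}\ltimes(\tilde{\mathfrak g}_{\geq 0}+\mathbb C c))}\bar R\,\mathbf 1$ is a $(\mathbf C L_{-1}\ltimes\tilde{\mathfrak g})$-module, isomorphic to $M$ as a $\tilde{\mathfrak g}$-module by PBW; this endows $M$ with a $\mathfrak p$-structure, and in $M$ the subspace $1\otimes\bar R\,\mathbf 1$ is a $\mathfrak q$-submodule on which $\mathfrak q$ acts through the operators $L_{-1},L_{0},x_{0}$ of $V$ restricted to $\bar R\,\mathbf 1$.

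Now put $A:=(\bar R\,\mathbf 1)_\text{ad}\otimes V$. Because $L_{-1},L_{0}$ and ${\mathfrak g}(0)$ annihilate $\mathbf 1$, the subspace $\bar R\,\mathbf 1\otimes\mathbf 1$ is a $\mathfrak q$-submodule of $A$, and $w\otimes\mathbf 1\mapsto 1\otimes w$ is an isomorphism of $\mathfrak q$-modules from $\bar R\,\mathbf 1\otimes\mathbf 1$ onto $1\otimes\bar R\,\mathbf 1\subset M$ (both carry the restricted operators $L_{-1},L_{0},x_{0}$). The decisive structural point is that $A$ is generated over $U(\tilde{\mathfrak g}_{<0})$, hence over $U(\mathfrak p)$, by $\bar R\,\mathbf 1\otimes\mathbf 1$: since $V=U(\tilde{\mathfrak g}_{<0})\mathbf 1$, the Leibniz rule for the $\mathfrak p$-action yields, for $p\in U(\tilde{\mathfrak g}_{<0})$, an identity $u\otimes p\mathbf 1=p\cdot(u\otimes\mathbf 1)-\sum(p_{(1)}\cdot_\text{ad}u)\otimes(p_{(2)}\mathbf 1)$ in which $\sum p_{(1)}\otimes p_{(2)}$ is the coproduct of $U(\tilde{\mathfrak g}_{<0})$ and all correction terms have $p_{(2)}$ of strictly smaller length, so induction on the length of $p$ finishes the claim. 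In particular $\Xi$, if it exists, is unique, since a $\mathfrak p$-map out of $A$ is determined by its restriction to $\bar R\,\mathbf 1\otimes\mathbf 1$.

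Existence and bijectivity of $\Xi$ I would then obtain from a character count. The canonical $\mathfrak p$-module map $\widetilde A:=U(\mathfrak p)\otimes_{U(\mathfrak q)}(\bar R\,\mathbf 1\otimes\mathbf 1)\to A$, $u\otimes n\mapsto un$, is surjective by the generation statement above; grading everything by $d$ with finite-dimensional homogeneous pieces and using $\mathfrak p=\mathfrak q\ltimes\tilde{\mathfrak g}_{<0}$ together with $V\cong U(\tilde{\mathfrak g}_{<0})$, one gets $\mathrm{ch}\,\widetilde A=\mathrm{ch}\,U(\tilde{\mathfrak g}_{<0})\cdot\mathrm{ch}(\bar R\,\mathbf 1)=\mathrm{ch}\,V\cdot\mathrm{ch}(\bar R\,\mathbf 1)=\mathrm{ch}\,A$, so this map is an isomorphism. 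Likewise $U(\mathfrak p)\otimes_{U(\mathfrak q)}(1\otimes\bar R\,\mathbf 1)\to M$ is a $\mathfrak p$-isomorphism, and functoriality of induction applied to the $\mathfrak q$-isomorphism $\bar R\,\mathbf 1\otimes\mathbf 1\cong 1\otimes\bar R\,\mathbf 1$ identifies these two induced modules; composing the three isomorphisms produces the $\mathfrak p$-isomorphism $\Xi\colon A\to M$ with $\Xi(w\otimes\mathbf 1)=1\otimes w$, which proves (i).

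For (ii), recall that $\Psi$ is a $\mathfrak p$-module map, so $\Psi\circ\Xi$ is the unique $\mathfrak p$-map $A\to V$ sending $w\otimes\mathbf 1$ to $w$; since $\Phi(w\otimes\mathbf 1)=w_{-1}\mathbf 1=w$ as well, it suffices to show that $\Phi$ is a $\mathfrak p$-homomorphism, for then $\Phi=\Psi\circ\Xi$ by the generation statement above, and since $\Xi$ is bijective $\ker\Phi=\Xi^{-1}(\ker\Psi)$, i.e.\ $\Xi(\ker\Phi)=\ker\Psi$. To see that $\Phi$ is $\mathfrak p$-equivariant I would check it on generators of $\mathfrak p$: for $L_{-1}$, $L_{0}$ and the elements of ${\mathfrak g}(0)$ it is immediate from the commutator formula (\ref{E:2.6}) and the fact that these are derivations of the vertex algebra $V$; the single substantial case is $x(-j)\in\tilde{\mathfrak g}_{<0}$, where equivariance of $\Phi$ is exactly the operator identity $[x(-j),u_{-1}]=\big([x(-j)\mathbf 1,u]\big)_{-1}$ on $V$, with the bracket on the right given by (\ref{E:6.1}). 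I expect this identity to be the real obstacle of the proof; I would establish it by expanding both sides using the commutator formula (valid for all modes here because $(x(-1)\mathbf 1)_{i}u=x_{i}u$ vanishes for $i$ large), the elementary relation $(D^{(m)}w)_{-1}=w_{-1-m}$, and $(x(-j)\mathbf 1)_{n}=(-1)^{j-1}\binom{n}{j-1}x_{n-j+1}$, and then matching the resulting binomial coefficients. By contrast the PBW and character bookkeeping and the generation lemma are routine once the decomposition $\mathfrak p=\mathfrak q\ltimes\tilde{\mathfrak g}_{<0}$ is set up; the only other point that needs care is the preliminary verification that $M$ carries a well-defined $L_{-1}$-action.
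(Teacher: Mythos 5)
The paper states Theorem~\ref{T:6.1} without proof, recalling it from \cite{P1} and \cite{P2}, so there is no in-paper argument to compare against; your proposal is, however, a correct and complete proof, and it follows precisely the route that Section~6 sets up: the decomposition $\mathbf C L_{-1}\ltimes\tilde{\mathfrak g}_{\leq 0}\cong\big(\mathbf C L_{-1}+\mathbf C L_{0}+{\mathfrak g}(0)\big)\ltimes\tilde{\mathfrak g}_{<0}$, the identification of both sides as modules induced from $\bar R\,\mathbf 1$ via generation plus a character count, and the key commutator identity $[x(-j),u_{-1}]=\big([x(-j)\mathbf 1,u]\big)_{-1}$. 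That identity does check out against the commutator formula (\ref{E:2.6}) and the skew-symmetry form of the bracket (\ref{E:6.1}), since $\big(D^{(m+1)}w\big)_{-1}=w_{-m-2}$, so the one step you flag as the ``real obstacle'' goes through as you expect.
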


We call elements in $\ker \Phi$\, {\it relations for annihilating
fields} (cf. \cite{P1}, \cite{P2}) since
$$
\sum :Y(a,z)Y(b,z):\, =0\qquad\text{for}\qquad \sum a\otimes b\in
\ker \Phi.
$$
By Theorem~\ref{T:6.1} we may identify the relations for
annihilating fields with elements of $\ker \Psi$, which is easier
to study by using the representation theory of affine Lie
algebras.

\section{Generators of relations for annihilating fields}

Let $\{x^i\}_{i\in I}$ and $\{y^i\}_{i\in I}$ be dual bases in
$\mathfrak g$. For $r\in R$ we define Sugawara's relation
\begin{equation}\label{E:7.1}
q_r=\frac{1}{k+g^\vee}\sum_{i\in I}x^i(-1)\otimes y^i(0)r-1\otimes
Dr
\end{equation}
as an element of $U(\tilde{\mathfrak
g})\otimes_{U(\tilde{\mathfrak g}_{\geq 0}+\mathbb C c)} \bar
R\,\mathbf 1$. As in the case of Casimir operator, Sugawara's
relation $q_r$ does not depend on a choice of dual bases
$\{x^i\}_{i\in I}$ and $\{y^i\}_{i\in I}$.
\begin{proposition}\label{P:7.1}
(i) $q_r$ is an element of \ $\ker \Psi$.

(ii) $r\mapsto q_r$ is a $\mathfrak g$-module homomorphism from
$R$ into $\ker \Psi$.

(iii) $x(i)q_r=0$ for all $x\in\mathfrak g$ and $i>0$.
\end{proposition}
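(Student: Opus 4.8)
The plan is to verify the three assertions essentially by computation in the vertex operator algebra $V$ and its completion $\overline{\mathcal U}$, exploiting the fact that the map $\Phi$ in Theorem~\ref{T:6.1} sends $u\otimes w\mapsto u_{-1}w$, so that showing $q_r\in\ker\Psi$ is equivalent (by $\Xi(\ker\Phi)=\ker\Psi$ and $\Phi=\Psi\circ\Xi$) to showing that the image of $q_r$ under $\Xi^{-1}$ lies in $\ker\Phi$. Concretely, $\Xi^{-1}$ takes $1\otimes w$ to $w\otimes\mathbf 1$, and under the identification of $\tilde{\mathfrak g}_{<0}\mathbf 1$ with $\tilde{\mathfrak g}_{<0}$ via $u\mapsto u_{-1}$, the element $q_r$ corresponds to
$$
\frac{1}{k+g^\vee}\sum_{i\in I}x^i(-1)\otimes\big(y^i(0)r\big) - 1\otimes Dr,
$$
so its $\Phi$-image is $\frac{1}{k+g^\vee}\sum_i x^i(-1)_{-1}\,(y^i(0)r) - (Dr)$, i.e. $\frac{1}{k+g^\vee}\sum_i x^i(-1)_{-1}\,y^i_0\,r - L_{-1}r$ acting inside $V$. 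The first step, then, is to show this expression vanishes in $V$; this is exactly the classical Sugawara identity expressing $L_{-1}$ (equivalently $D$) in terms of the affine generators, and on the submodule $R\subset N^1(k\Lambda_0)$ it reduces to the statement that the Casimir-type operator $\sum_i x^i(-1)\,y^i(0)$ reproduces $(k+g^\vee)L_{-1}$ up to terms that annihilate the relevant vectors. The main obstacle is bookkeeping: one must be careful that $\sum_i x^i_0 y^i_0$ acts on the irreducible $\mathfrak g$-module $R$ as the Casimir scalar, and that the normal-ordering corrections produce precisely the dual Coxeter number shift $g^\vee$; this is where the hypothesis $\langle\theta,\theta\rangle=2$ and the chosen normalization enter.

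For part (ii), I would observe that each term in the defining formula for $q_r$ is built from $r$ by applying a fixed $\mathfrak g$-equivariant operation: the assignment $r\mapsto\sum_i x^i(-1)\otimes y^i(0)r$ is $\mathfrak g$-equivariant because $\sum_i x^i\otimes y^i$ is the Casimir element of $\mathfrak g\otimes\mathfrak g$, which is $\mathfrak g$-invariant (here one uses that the bracket action of $\mathfrak g(0)$ on $U(\tilde{\mathfrak g})\otimes_{\cdots}\bar R\mathbf 1$ is by derivations on the first tensor factor and by the module action on $R$ in the second), and $r\mapsto 1\otimes Dr$ is $\mathfrak g$-equivariant since $D=L_{-1}$ commutes with $\mathfrak g(0)=\mathfrak g$. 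Combined with part (i), this gives a $\mathfrak g$-module map $R\to\ker\Psi$. The independence of $q_r$ from the choice of dual bases, asserted just before the proposition, follows from the same invariance of the Casimir element and can be invoked directly.

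For part (iii), the plan is a direct commutator computation: I would compute $x(i)q_r$ for $x\in\mathfrak g$, $i>0$, using $[x(i),x^j(-1)] = [x,x^j](i-1) + i\delta_{i,1}\langle x,x^j\rangle c$ together with $[x(i),y^j(0)r] = [x,y^j](i)r$ (since $i>0$ kills $\mathbf 1$ and acts on $\bar R\mathbf 1$ only through the $\mathfrak g(0)$ part when $i=0$, which does not occur here) and $[x(i),Dr]$, then regroup the sums over the dual bases. For $i\ge 2$ every term manifestly vanishes because $x(i)$ annihilates both $\bar R\mathbf 1$ and $\mathbf 1$ and the commutators shift the first factor into $\mathfrak g(i-1)$ with $i-1\ge 1$, which again annihilates $\bar R\mathbf 1$; the only substantive case is $i=1$, where the central term $i\delta_{i,1}\langle x,x^j\rangle c = k\langle x,x^j\rangle$ appears, and one must check that after summing against $y^j(0)r$ it cancels against the contribution coming from $1\otimes Dr$ via the Sugawara normalization $1/(k+g^\vee)$ — more precisely, the $i=1$ terms collapse to a multiple of $\sum_j\langle x,x^j\rangle\,y^j(0)r - (\text{something})$, and invariance of the form plus the relation $L_{-1}r = Dr$ forces the result to be $0$. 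The main obstacle throughout is keeping the index manipulations over dual bases and the central-term contributions straight; no deep input beyond Theorem~\ref{T:6.1} and the standard Sugawara construction is needed.
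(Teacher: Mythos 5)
The paper itself gives no proof of Proposition~\ref{P:7.1}; it is recalled from \cite{P1}, \cite{P2}, so there is no in-paper argument to compare against. Your plan is the standard one and is essentially correct: (i) reduces to the Sugawara identity $L_{-1}r=\tfrac{1}{k+g^\vee}\sum_i x^i(-1)y^i(0)r$, which holds on $R$ because $\mathfrak g(j)R=0$ for $j\geq 1$ (as $R=U(\mathfrak g)x_\theta(-1)^{k+1}\mathbf 1$ sits in the lowest degree of $N^1(k\Lambda_0)$); (ii) is invariance of the Casimir tensor; (iii) is a direct commutator computation. Two points deserve tightening. First, the detour through $\Phi$ and $\Xi$ in (i) is unnecessary and slightly off: the domain of $\Phi$ is $(\bar R\,\mathbf 1)_{\text{ad}}\otimes V$ with the \emph{first} factor in $\bar R\,\mathbf 1$, so $\Xi^{-1}(q_r)$ is not literally $\sum_i x^i(-1)\mathbf 1\otimes y^i(0)r$; but since $\Psi(u\otimes w)=uw$ you can just compute $\Psi(q_r)=\tfrac{1}{k+g^\vee}\sum_i x^i(-1)y^i(0)r-Dr$ directly, which is the expression you end up with anyway. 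Second, in (iii) the only nontrivial case is $i=1$, and the cancellation there is a three-way one: the central term contributes $\tfrac{k}{k+g^\vee}\,x(0)r$, the term $\tfrac{1}{k+g^\vee}\sum_j 1\otimes[x,x^j](0)y^j(0)r$ contributes $\tfrac{g^\vee}{k+g^\vee}\,x(0)r$ via the identity $\sum_j[x,x^j]y^j=g^\vee x$ in $U(\mathfrak g)$ (the adjoint Casimir eigenvalue $2g^\vee$ under the normalization $\langle\theta,\theta\rangle=2$), and $x(1)(1\otimes Dr)=1\otimes x(0)r$ contributes $-x(0)r$. Your sketch records only the central term and the $Dr$ term explicitly and leaves the $g^\vee$ contribution as ``(something)''; without it the sum would be $-\tfrac{g^\vee}{k+g^\vee}x(0)r\neq 0$, so this identity must be stated and used, not just alluded to.
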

Let us denote the set of all Sugawara's relations (\ref{E:7.1}) by
$$
Q_\text{Sugawara}=\{q_r\mid r\in R\}\subset\ker \Psi,
$$
and let us define the $\tilde{\mathfrak g}$-module homomorphism
$$
\Psi_0  \colon U(\tilde{\mathfrak g})\otimes_{U(\tilde{\mathfrak
g}_{\geq 0}+\mathbb C c)} R \to N(k\Lambda_0),\qquad u\otimes w
\mapsto uw.
$$
Then we have:
\begin{proposition}\label{P:7.2} As a $(\mathbf C
L_{-1}\ltimes\tilde{\mathfrak g}_{\leq 0})$-module  $\ker \Psi$ is
generated by
$$
\ker\Psi_0+Q_\text{Sugawara}\,.
$$
\end{proposition}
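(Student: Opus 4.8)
The plan is to analyze $\ker\Psi$ by filtering $U(\tilde{\mathfrak g})\otimes_{U(\tilde{\mathfrak g}_{\geq 0}+\mathbb C c)}\bar R\,\mathbf 1$ according to the grading by the ``extra'' derivations applied to $R$. Recall that $\bar R\,\mathbf 1=\coprod_{i=0}^{n} D^i R$, so every element of the induced module is a sum of terms $u\otimes D^i r$ with $u\in U(\tilde{\mathfrak g}_{<0})$ and $r\in R$. Since $\Psi$ is a $(\mathbf C L_{-1}\ltimes\tilde{\mathfrak g}_{\leq 0})$-module map and we want to show $\ker\Psi$ is generated, as such a module, by $\ker\Psi_0+Q_{\text{Sugawara}}$, the first step is to reduce a general element of $\ker\Psi$ modulo the submodule generated by $Q_{\text{Sugawara}}$ to an element lying in the image of $U(\tilde{\mathfrak g})\otimes_{U(\tilde{\mathfrak g}_{\geq 0}+\mathbb C c)} R$, i.e.\ a combination of terms $u\otimes r$ with $i=0$; such elements are in $\ker\Psi$ iff they are in $\ker\Psi_0$ by definition of $\Psi_0$, and we are done.

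First I would make precise the role of $q_r$. Modulo the action of $\tilde{\mathfrak g}_{<0}$, the relation $q_r$ equates $1\otimes Dr$ with $\frac{1}{k+g^\vee}\sum_i x^i(-1)\otimes y^i(0)r$; applying $L_{-1}$ repeatedly and using that $L_{-1}=D$ acts as a derivation and that $D$ commutes appropriately, one gets, for each $i=1,\dots,n$, that $1\otimes D^i r$ is congruent, modulo the $(\mathbf C L_{-1}\ltimes\tilde{\mathfrak g}_{\leq 0})$-submodule generated by $Q_{\text{Sugawara}}$ (and using Proposition~\ref{P:7.1}(ii), which lets us iterate $r\mapsto q_r$ since $y^i(0)r\in R$ again), to a sum of terms of the form $u'\otimes D^{i'} r'$ with strictly smaller $i'$ and $u'\in U(\tilde{\mathfrak g}_{<0})$ of correspondingly higher PBW degree. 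Iterating this downward induction on $i$, every generator $1\otimes D^i r$ of the induced module is congruent modulo $\langle Q_{\text{Sugawara}}\rangle$ to an element in the image of $U(\tilde{\mathfrak g}_{<0})\otimes R$ (the $i=0$ part). Since the whole module is generated over $U(\tilde{\mathfrak g}_{<0})$ by these $1\otimes D^i r$, and the submodule we are quotienting by is $U(\tilde{\mathfrak g}_{<0})$-stable (indeed $(\mathbf C L_{-1}\ltimes\tilde{\mathfrak g}_{\leq 0})$-stable), it follows that every element of $U(\tilde{\mathfrak g})\otimes_{U(\tilde{\mathfrak g}_{\geq 0}+\mathbb C c)}\bar R\,\mathbf 1$ is congruent modulo $\langle Q_{\text{Sugawara}}\rangle$ to an element of $\mathrm{im}\bigl(U(\tilde{\mathfrak g})\otimes_{U(\tilde{\mathfrak g}_{\geq 0}+\mathbb C c)} R\bigr)$.

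Now take $x\in\ker\Psi$. By the previous paragraph, write $x=q+x_0$ where $q$ lies in the $(\mathbf C L_{-1}\ltimes\tilde{\mathfrak g}_{\leq 0})$-submodule generated by $Q_{\text{Sugawara}}$ and $x_0$ is in the image of $U(\tilde{\mathfrak g})\otimes_{U(\tilde{\mathfrak g}_{\geq 0}+\mathbb C c)} R$. By Proposition~\ref{P:7.1}(i), $q\in\ker\Psi$, hence $x_0\in\ker\Psi$ as well; but $\Psi$ restricted to the image of $U(\tilde{\mathfrak g})\otimes_{U(\tilde{\mathfrak g}_{\geq 0}+\mathbb C c)} R$ is exactly the map induced by $\Psi_0$, so $x_0$ comes from an element of $\ker\Psi_0$. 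Therefore $x\in \langle Q_{\text{Sugawara}}\rangle+\ker\Psi_0$ as a $(\mathbf C L_{-1}\ltimes\tilde{\mathfrak g}_{\leq 0})$-module, which is the claim.

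The main obstacle, and the step requiring genuine care rather than formal manipulation, is the downward induction of the second paragraph: one must check that applying $L_{-1}$ to $q_r$ and to the iterated relations $q_{y^i(0)r}$, $q_{y^j(0)y^i(0)r}$, etc., really does express $1\otimes D^i r$ in terms of lower $D$-powers \emph{without leaving the induced module} and without circularity, i.e.\ that the recursion is well-founded. This amounts to verifying that the leading ($i$-highest) term of $L_{-1}^{\,i-1}q_r$ is a nonzero scalar multiple of $1\otimes D^i r$ — which is where the hypothesis $k\neq -g^\vee$ (so that $\tfrac{1}{k+g^\vee}$ makes sense) and the derivation property of $L_{-1}$ on the product $u_{-1}v$ enter — together with the bookkeeping that all correction terms have strictly lower $D$-degree on the $R$-factor. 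I would isolate this as a short lemma before assembling the proof.
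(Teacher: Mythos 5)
The paper itself gives no proof of Proposition~\ref{P:7.2}; it is recalled from \cite{P1} and \cite{P2}, so there is nothing in the text to compare against. Your argument is correct and is essentially the natural (and, as far as I can tell, the intended) one: since $\bar R\,\mathbf 1=\coprod_i D^iR$, the elements $u'L_{-1}^{\,i-1}q_{r'}$ with $r'\in R$ express each $u'\otimes D^{i}r'$ modulo terms whose second tensor factor has strictly smaller $D$-degree, so a downward induction on that degree reduces any element of $\ker\Psi$, modulo the $(\mathbf C L_{-1}\ltimes\tilde{\mathfrak g}_{\leq 0})$-submodule generated by $Q_\text{Sugawara}\subset\ker\Psi$, to an element of the image of $U(\tilde{\mathfrak g})\otimes_{U(\tilde{\mathfrak g}_{\geq 0}+\mathbb C c)}R$, which is then forced to lie in $\ker\Psi_0$. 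The iteration is well-founded exactly as you say, because $y^i(0)r\in R$ and $[L_{-1},y^i(0)]=0$, so every correction term is again of the form $u''\otimes D^{i''}r''$ with $r''\in R$ and $i''$ strictly smaller. One small correction to your closing remark: the leading coefficient of $-1\otimes D^i r$ in $L_{-1}^{\,i-1}q_r$ is exactly $-1$ and does not involve $k+g^\vee$; the condition $k\neq -g^\vee$ enters only in making $q_r$ well defined in the first place.
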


Let us denote by $\alpha_*$ all simple roots  of $\tilde{\mathfrak
g}$ connected with $\alpha_0$ in a Dynkin diagram:
$$
\alpha_*\neq \alpha_0\,,\qquad
\langle\alpha_0,\alpha_*^\vee\rangle\neq 0.
$$
For $A_n^{(1)}$, $n\geq 2$, there are exactly two such simple
roots, for all the other untwisted affine Lie algebras
$\tilde{\mathfrak g}$ there is exactly one such simple root. In
the case $\tilde{\mathfrak g}\not\cong{\mathfrak sl}(2,\mathbb
C)\,\widetilde{}$\, we have a root vector
$x_{\theta-\alpha_*}=[x_{-\alpha_*},x_{\theta}]$ in the
corresponding finite-dimensional ${\mathfrak g}$.

Since $R$ generates the maximal $\tilde{\mathfrak g}$-submodule
$N^1(k\Lambda_0)$ of $N(k\Lambda_0)$, we have the exact sequence
of $\tilde{\mathfrak g}$-modules
$$
U(\tilde{\mathfrak g})\otimes_{U(\tilde{\mathfrak g}_{\geq
0}+\mathbb C c)} R \xrightarrow{\Psi_0} N(k\Lambda_0)\rightarrow
L(k\Lambda_0)\rightarrow 0.
$$
Generators of $\ker\Psi_0$ can be determined by using
Garland-Lepowsky's resolution
$$
\dots \rightarrow E_2 \rightarrow E_1 \rightarrow E_0\rightarrow
L(k\Lambda_0)\rightarrow 0
$$
of a standard module in terms of generalized Verma modules
\cite{GL}, or by using the BGG type resolution of a standard
module in terms of Verma modules, due to A.~Rocha-Caridi and
N.~R.~Wallach \cite{RW}:
\begin{proposition}\label{P:7.3}
Let $\tilde{\mathfrak g}\not\cong{\mathfrak sl}(2,\mathbb
C)\,\widetilde{}$\,  be an untwisted affine Lie algebra. Then
$\ker\Psi_0$ is generated by the singular vector(s)
$$
x_{\theta-\alpha_*}(-1)\otimes x_\theta(-1)^{k+1}\mathbf 1\mathbf
-x_{\theta}(-1)\otimes
x_{\theta-\alpha_*}(-1)x_\theta(-1)^{k}\mathbf 1,\quad
\langle\alpha_0,\alpha_*^\vee\rangle\neq 0.
$$
\end{proposition}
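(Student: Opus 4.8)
The plan is to read off the generators of $\ker\Psi_0$ from the Garland--Lepowsky resolution
$$
\cdots\longrightarrow E_2\xrightarrow{\,d_2\,}E_1\xrightarrow{\,d_1\,}E_0\longrightarrow L(k\Lambda_0)\longrightarrow 0
$$
of the standard module by generalized Verma modules for the maximal parabolic $\mathfrak p=\tilde{\mathfrak g}_{\geq 0}+\mathbb C c$ attached to the affine node $\alpha_0$. Writing $M_{\mathfrak p}(w\cdot k\Lambda_0)$ for the module induced from the finite-dimensional irreducible $\mathfrak g$-module of highest weight $(w\cdot k\Lambda_0)\vert_{\mathfrak h}$, one has $E_p=\bigoplus M_{\mathfrak p}(w\cdot k\Lambda_0)$, the sum over minimal coset representatives $w$ of length $p$ for $W_{\mathfrak g}$; thus $E_0=N(k\Lambda_0)$, $E_1=M_{\mathfrak p}(s_0\cdot k\Lambda_0)$ (since $s_0$ is the only length-one such representative), and $E_2=\bigoplus_{*}M_{\mathfrak p}(s_0 s_*\cdot k\Lambda_0)$, where $\alpha_*$ runs over the simple roots with $\langle\alpha_0,\alpha_*^\vee\rangle\neq 0$. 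First I would check that $E_1$ is precisely the domain of $\Psi_0$: the vector $x_\theta(-1)^{k+1}\mathbf 1$ has $\mathfrak h$-weight $(k+1)\theta$, lies in $L_0$-degree $k+1$ and is annihilated by $\tilde{\mathfrak g}_{>0}$ (this is the content of Section~6, where $R$ is seen to be a $\tilde{\mathfrak g}_{\geq 0}$-submodule of $V$), while $s_0\cdot k\Lambda_0=k\Lambda_0-(k+1)\alpha_0$ restricts to $(k+1)\theta$ on $\mathfrak h$ and has $d$-eigenvalue $-(k+1)$; hence $U(\tilde{\mathfrak g})\otimes_{U(\tilde{\mathfrak g}_{\geq 0}+\mathbb C c)}R\cong E_1$. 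Under this identification $\Psi_0$ and $d_1$ both send the generator $1\otimes x_\theta(-1)^{k+1}\mathbf 1$ to the singular vector $x_\theta(-1)^{k+1}\mathbf 1\in N(k\Lambda_0)$, so $d_1$ and $\Psi_0$ agree up to a nonzero scalar, since $\operatorname{Hom}_{\tilde{\mathfrak g}}(E_1,E_0)$ is one-dimensional (there is a unique singular vector of weight $s_0\cdot k\Lambda_0$ in $N(k\Lambda_0)$); thus $\ker\Psi_0=\ker d_1$. By exactness $\ker\Psi_0=\operatorname{im}d_2$, and as each summand of $E_2$ is generated over $U(\tilde{\mathfrak g})$ by its highest weight vector $v_*$, we get $\ker\Psi_0=\sum_* U(\tilde{\mathfrak g})\,d_2(v_*)$.

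Next I would pin each $d_2(v_*)$ down by a weight count. Using $\langle\alpha_*,\alpha_0^\vee\rangle=-1$ one computes $s_0 s_*\cdot k\Lambda_0=k\Lambda_0-\alpha_*-(k+2)\alpha_0$, which has $\mathfrak h$-component $(k+2)\theta-\alpha_*$ and $d$-eigenvalue $-(k+2)$; hence $d_2(v_*)$ lies in the degree-$(k+2)$ weight space $\bigl((\mathfrak g\otimes t^{-1})\otimes R\bigr)\bigl[(k+2)\theta-\alpha_*\bigr]$ of $E_1$. Because $\theta$ is the maximal root and $\alpha_*$ is simple, the only roots $\gamma$ with $\theta-\alpha_*\le\gamma\le\theta$ are $\gamma=\theta-\alpha_*$ and $\gamma=\theta$, while the weights $(k+1)\theta$ and $(k+1)\theta-\alpha_*$ each occur with multiplicity one in $R=V_{(k+1)\theta}$; the weight space $(k+1)\theta-\alpha_*$ is spanned by $x_{\theta-\alpha_*}(-1)x_\theta(-1)^k\mathbf 1$, which lies in $R$ because it is a multiple of $x_{-\alpha_*}(0)\,x_\theta(-1)^{k+1}\mathbf 1$ (using that $x_\theta(-1)$ commutes with $x_{\theta-\alpha_*}(-1)$, as $2\theta-\alpha_*$ is not a root). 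So the above weight space of $E_1$ is two-dimensional, spanned by $x_{\theta-\alpha_*}(-1)\otimes x_\theta(-1)^{k+1}\mathbf 1$ and $x_\theta(-1)\otimes x_{\theta-\alpha_*}(-1)x_\theta(-1)^k\mathbf 1$.

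Call the element displayed in the statement $\mathbf r_*$. A direct computation in $N(k\Lambda_0)$ gives
$$
\Psi_0(\mathbf r_*)=x_{\theta-\alpha_*}(-1)x_\theta(-1)^{k+1}\mathbf 1-x_\theta(-1)x_{\theta-\alpha_*}(-1)x_\theta(-1)^k\mathbf 1=[x_{\theta-\alpha_*},x_\theta](-2)\,x_\theta(-1)^k\mathbf 1=0,
$$
since $[x_{\theta-\alpha_*},x_\theta]\in\mathfrak g_{2\theta-\alpha_*}=0$; on the other hand $\Psi_0$ does not annihilate the first of the two spanning vectors, its image $x_{\theta-\alpha_*}(-1)x_\theta(-1)^{k+1}\mathbf 1$ being a nonzero Poincar\'e--Birkhoff--Witt element of $N(k\Lambda_0)$. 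Hence $\Psi_0$ has rank one on the two-dimensional weight space and $\ker\Psi_0$ meets it exactly in the line $\mathbb C\mathbf r_*$. As $d_2(v_*)$ lies both in $\ker\Psi_0$ and in this weight space, $d_2(v_*)\in\mathbb C\mathbf r_*$, whence $\operatorname{im}d_2\subseteq\sum_* U(\tilde{\mathfrak g})\,\mathbf r_*$; combined with $\mathbf r_*\in\ker\Psi_0$ this yields $\ker\Psi_0=\sum_* U(\tilde{\mathfrak g})\,\mathbf r_*$, which is the assertion. Note that the argument never needs $d_2(v_*)\neq 0$, and that it is uniform in $\tilde{\mathfrak g}\not\cong{\mathfrak sl}(2,\mathbb C)\,\widetilde{}\,$; for $A_n^{(1)}$ one merely gets two indices $*$, and nothing is special to type $C_n^{(1)}$.

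The hard part is the first step: setting up the affine parabolic resolution correctly and verifying that its term $E_1$ is precisely the generalized Verma module induced from $R$ with boundary map $\Psi_0$, so that the exactness of the resolution can be leveraged to force $\ker\Psi_0$ to be generated in degree $k+2$ by one singular vector per $\alpha_*$. Once that structural input is in place, the identification of these singular vectors with the $\mathbf r_*$ is the short weight-space computation above; the remaining points --- which length-two coset representatives occur, the multiplicity-one claims, and the normalization constants --- are routine.
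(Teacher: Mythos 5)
Your argument is correct and is precisely the route the paper indicates: it derives the proposition from the Garland--Lepowsky resolution by identifying $E_1$ with the domain of $\Psi_0$, so that $\ker\Psi_0=\operatorname{im}d_2$, and then locating $d_2(v_*)$ inside the two-dimensional weight space of weight $(k+2)\theta-\alpha_*$ and degree $k+2$, where $\Psi_0$ visibly has a one-dimensional kernel spanned by the displayed vector. The paper leaves exactly this verification to the reader (offering the Rocha-Caridi--Wallach BGG-type resolution as an alternative), so your write-up simply supplies the details of its intended proof.
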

\bigskip

By combining Theorem~\ref{T:6.1} and Propositions~\ref{P:7.2} and
\ref{P:7.3} we have a description of generators of relations for
annihilating fields:
\begin{theorem}\label{T:7.4}
Let $\tilde{\mathfrak g}\not\cong{\mathfrak sl}(2,\mathbb
C)\,\widetilde{}$\,  be an untwisted affine Lie algebra. Then the
$(\mathbf C L_{-1}\ltimes\tilde{\mathfrak g}_{\leq 0})$-module
$\ker \Phi$ is generated by vectors
$$
 x_\theta(-1)^{k+1}\mathbf 1 \otimes x_{\theta-\alpha_*}(-1)\mathbf 1
- x_{\theta-\alpha_*}(-1)x_\theta(-1)^{k}\mathbf 1\otimes
x_{\theta}(-1)\mathbf 1,\quad
\langle\alpha_0,\alpha_*^\vee\rangle\neq 0,
$$
$$
\tfrac{1}{k+g^\vee}\sum_{i\in I}y^i(0)x_\theta(-1)^{k+1}\mathbf 1
\otimes x^i(-1)\mathbf 1 + L_{-1}\big(\tfrac{1}{k+g^\vee}\,\Omega
-1\big)x_\theta(-1)^{k+1}\mathbf 1 \otimes\mathbf 1.
$$
\end{theorem}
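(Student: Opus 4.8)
The plan is to deduce Theorem~\ref{T:7.4} directly from the three ingredients already in place: the isomorphism $\Xi$ of Theorem~\ref{T:6.1}(i), the generation statement for $\ker\Psi$ in Proposition~\ref{P:7.2}, and the explicit generators of $\ker\Psi_0$ and of $Q_{\text{Sugawara}}$ coming from Propositions~\ref{P:7.3} and~\ref{P:7.1}. By Theorem~\ref{T:6.1}(ii) we have $\Xi(\ker\Phi)=\ker\Psi$ as $(\mathbf C L_{-1}\ltimes\tilde{\mathfrak g}_{\le 0})$-modules, so a generating set for $\ker\Phi$ is obtained by applying $\Xi^{-1}$ to a generating set for $\ker\Psi$. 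Proposition~\ref{P:7.2} gives such a generating set as $\ker\Psi_0+Q_{\text{Sugawara}}$, and Proposition~\ref{P:7.3} reduces $\ker\Psi_0$ (as a $(\mathbf C L_{-1}\ltimes\tilde{\mathfrak g}_{\le 0})$-module, since $\Psi_0$ is in particular such a module map) to the single singular vector
$$
s=x_{\theta-\alpha_*}(-1)\otimes x_\theta(-1)^{k+1}\mathbf 1-x_\theta(-1)\otimes x_{\theta-\alpha_*}(-1)x_\theta(-1)^{k}\mathbf 1.
$$
Likewise, by Proposition~\ref{P:7.1}(ii) the module $Q_{\text{Sugawara}}$ is the image of the irreducible $\mathfrak g$-module $R$ under $r\mapsto q_r$, hence as a $(\mathbf C L_{-1}\ltimes\tilde{\mathfrak g}_{\le 0})$-module it is generated by the single element $q_{x_\theta(-1)^{k+1}\mathbf 1}$ corresponding to the highest weight vector of $R$. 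So it suffices to compute $\Xi^{-1}(s)$ and $\Xi^{-1}(q_{x_\theta(-1)^{k+1}\mathbf 1})$.

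The computation of $\Xi^{-1}$ on these two elements is the technical heart. First I would record the defining property of $\Xi$: $\Xi(w\otimes\mathbf 1)=1\otimes w$, together with the fact that $\Xi$ intertwines the $(\mathbf C L_{-1}\ltimes\tilde{\mathfrak g}_{\le 0})$-actions; equivalently $\Xi^{-1}(1\otimes w)=w\otimes\mathbf 1$ and one moves factors across the tensor sign using these actions. Concretely, for $x\in\tilde{\mathfrak g}_{<0}$ one has in $U(\tilde{\mathfrak g})\otimes_{U(\tilde{\mathfrak g}_{\ge 0}+\mathbb C c)}\bar R\mathbf 1$ the relation $x\otimes w=1\otimes x_{\text{ad}}w+\dots$, and the inverse translates $x(-1)\otimes(\text{something})\mapsto(\text{something})\otimes x(-1)\mathbf 1$ up to lower-order correction terms coming from the adjoint action, exactly the shape seen on the right-hand side of Theorem~\ref{T:7.4}. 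Applying this to $s$: the term $x_{\theta-\alpha_*}(-1)\otimes x_\theta(-1)^{k+1}\mathbf 1$ becomes $x_\theta(-1)^{k+1}\mathbf 1\otimes x_{\theta-\alpha_*}(-1)\mathbf 1$ plus correction, the term $x_\theta(-1)\otimes x_{\theta-\alpha_*}(-1)x_\theta(-1)^{k}\mathbf 1$ becomes $x_{\theta-\alpha_*}(-1)x_\theta(-1)^{k}\mathbf 1\otimes x_\theta(-1)\mathbf 1$ plus correction, and one checks the corrections cancel — this is forced because $s$ is a \emph{singular} vector, so its image is annihilated by $\tilde{\mathfrak g}_{>0}$, and a $\tilde{\mathfrak g}_{>0}$-invariant element of $(\bar R\mathbf 1)_{\text{ad}}\otimes V$ sitting in the relevant weight space must be the displayed clean expression. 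For $q_r$ with $r=x_\theta(-1)^{k+1}\mathbf 1$, I would expand the definition~(\ref{E:7.1}), $q_r=\tfrac{1}{k+g^\vee}\sum_i x^i(-1)\otimes y^i(0)r-1\otimes Dr$, apply $\Xi^{-1}$ termwise — the $1\otimes Dr$ term maps to $Dr\otimes\mathbf 1=L_{-1}x_\theta(-1)^{k+1}\mathbf 1\otimes\mathbf 1$, and the $x^i(-1)\otimes y^i(0)r$ terms move across to produce $\tfrac{1}{k+g^\vee}\sum_i y^i(0)x_\theta(-1)^{k+1}\mathbf 1\otimes x^i(-1)\mathbf 1$ plus a correction term which, upon resumming $\sum_i x^i y^i$ on the $\bar R\mathbf 1$ side, assembles into the Casimir $\Omega$ acting on $x_\theta(-1)^{k+1}\mathbf 1$, yielding precisely $L_{-1}\big(\tfrac{1}{k+g^\vee}\Omega-1\big)x_\theta(-1)^{k+1}\mathbf 1\otimes\mathbf 1$.

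The main obstacle I anticipate is bookkeeping the correction terms produced when transporting a factor $x(-1)$ across the tensor sign under $\Xi^{-1}$: each such move generates, via the adjoint action~(\ref{E:6.1}) $[u,v]=\sum_{n\ge 0}(-1)^nD^{(n+1)}(u_nv)$, a sum of lower-degree terms, and one must verify these recombine into the stated expressions rather than contributing spurious pieces. I would control this either by working weight-space by weight-space (the relevant weight and degree are small, so the spaces involved are explicitly finite-dimensional) and invoking the singular-vector / $\mathfrak g$-invariance characterization to pin down the answer, or by a direct but careful use of the Casimir identity $\sum_i x^i(0)y^i(0)=\Omega$ together with $\Omega\cdot r=2(k+g^\vee)\cdot(\text{const})\,r$ on the irreducible $\mathfrak g$-module $R$ to collapse the Sugawara correction. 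Granting Propositions~\ref{P:7.1}--\ref{P:7.3} and Theorems~\ref{T:6.1},\,\ref{T:7.4}'s inputs as stated, once $\Xi^{-1}(s)$ and $\Xi^{-1}(q_r)$ are identified with the two displayed vectors the theorem follows immediately, since these two elements generate $\Xi^{-1}(\ker\Psi_0+Q_{\text{Sugawara}})=\ker\Phi$ as a $(\mathbf C L_{-1}\ltimes\tilde{\mathfrak g}_{\le 0})$-module.
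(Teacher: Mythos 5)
Your proposal is correct and follows essentially the same route as the paper, which obtains Theorem~\ref{T:7.4} precisely by combining Theorem~\ref{T:6.1} with Propositions~\ref{P:7.2} and~\ref{P:7.3} and transporting the generators of $\ker\Psi$ back through $\Xi^{-1}$. You supply more detail than the paper does on how the correction terms from moving $x(-1)$ across the tensor sign cancel (for the singular vector) or assemble into the Casimir term (for the Sugawara vector), and you correctly identify that bookkeeping as the only technical point.
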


This description of generators of relations for annihilating
fields has some disadvantages when it comes to combinatorial
applications. Namely, the obvious relation
$$
x_\theta(z)^{k+1}\tfrac{d}{dz}x_\theta(z)-
\tfrac{1}{k+1}\,\tfrac{d}{dz}(x_\theta(z)^{k+1}) x_\theta(z)=0
$$
for the annihilating field $x_\theta(z)^{k+1}$ comes from the
element
\begin{equation}\label{E:7.2}
q_{(k+2)\theta}=x_{\theta}(-2)\otimes x_\theta(-1)^{k+1}\mathbf
1\mathbf -x_{\theta}(-1)\otimes
x_{\theta}(-2)x_\theta(-1)^{k}\mathbf 1
\end{equation}
in $\ker\Psi$. This element $q_{(k+2)\theta}$ has length $k+2$ in
the natural filtration, but when written in terms of generators
described in Theorem~\ref{T:7.4}, it is expressed in terms of
elements of length $>k+2$. On the other hand, we can obtain from
(\ref{E:7.2}) both the singular vector(s)
$$
q_{(k+2)\theta-\alpha_*}=x_{\theta-\alpha_*}(-1)\otimes
x_\theta(-1)^{k+1}\mathbf 1\mathbf -x_{\theta}(-1)\otimes
x_{\theta-\alpha_*}(-1)x_\theta(-1)^{k}\mathbf 1
$$
in $\ker\Psi_0$ and the Sugawara singular vector
$$
q_{(k+1)\theta}=\tfrac{1}{k+g^\vee}\sum_{i\in I}x^i(-1)\otimes
y^i(0)x_\theta(-1)^{k+1}\mathbf 1-1\otimes
Dx_\theta(-1)^{k+1}\mathbf 1
$$
by using the action of $\tilde{\mathfrak g}_{\geq 0}$ on
$\ker\Psi$:
\begin{lemma}\label{L:7.5} Let $\Omega$ be the Casimir
operator for ${\mathfrak g}\not\cong{\mathfrak sl}(2,\mathbb C)$
and $\lambda=(k+2)\theta-\alpha_*$. Then
\begin{align*}
&q_{(k+2)\theta-\alpha_*}=x_{-\alpha_*}(1)q_{(k+2)\theta},\\
&q_{(k+1)\theta}=\tfrac{k+1}{2(k+2)(k+g^\vee)}\big(\Omega-(\lambda+2\rho,\lambda)\big)
x_{-\theta}(1)q_{(k+2)\theta}.
\end{align*}
\end{lemma}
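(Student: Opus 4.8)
The plan is to obtain both identities by letting suitable elements of $\tilde{\mathfrak g}_{\geq 0}$ act, inside the $\tilde{\mathfrak g}$-module $U(\tilde{\mathfrak g})\otimes_{U(\tilde{\mathfrak g}_{\geq 0}+\mathbb C c)}\bar R\,\mathbf 1$, on the relation $q_{(k+2)\theta}$ of (\ref{E:7.2}). The only tool is a concrete description of this action: an operator $x(m)$ with $m\geq 0$ acts by left multiplication and is pushed through the $U(\tilde{\mathfrak g})$-factor by $[x(m),y(-l)]=[x,y](m-l)+m\delta_{m-l,0}\langle x,y\rangle c$; once it reaches $\bar R\,\mathbf 1$ one uses that $\mathbf 1$ is killed by $\tilde{\mathfrak g}_{\geq 0}$, that the singular vector $x_\theta(-1)^{k+1}\mathbf 1$ is killed by $\mathfrak n_+$ and by $\mathfrak g\otimes t^{\geq 1}$, that $D=L_{-1}$ commutes with $\mathfrak g(0)$ and satisfies $[L_{-1},x(m)]=-m\,x(m-1)$, and that $x_\theta(-2)x_\theta(-1)^{k}\mathbf 1=\tfrac{1}{k+1}D\big(x_\theta(-1)^{k+1}\mathbf 1\big)\in\bar R\,\mathbf 1$. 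I would also record the $\mathfrak{sl}_2$-triple $e=x_\theta(-1)$, $f=x_{-\theta}(1)$, $h'=h_\theta(0)-c$ acting on $N(k\Lambda_0)$, which gives $x_{-\theta}(1)\,x_\theta(-1)^{m}\mathbf 1=m(k+1-m)\,x_\theta(-1)^{m-1}\mathbf 1$; in particular it vanishes for $m=k+1$, consistent with $x_\theta(-1)^{k+1}\mathbf 1$ being singular.

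For the first identity I apply $x_{-\alpha_*}(1)$ to each of the two summands of (\ref{E:7.2}). Each summand splits into a term in which $x_{-\alpha_*}(1)$ crosses a factor $x_\theta(-j)$, producing $[x_{-\alpha_*},x_\theta](-j+1)=x_{\theta-\alpha_*}(-j+1)$ with \emph{no} central correction (here $\langle x_{-\alpha_*},x_\theta\rangle=0$ because $\theta\neq\alpha_*$ for $\mathfrak g\not\cong\mathfrak{sl}(2,\mathbb C)$), and a term in which $x_{-\alpha_*}(1)$ reaches $\bar R\,\mathbf 1$; the latter vanishes, since $x_{\theta-\alpha_*}=[x_{-\alpha_*},x_\theta]\in\mathfrak n_+$ and $[x_{\theta-\alpha_*},x_\theta]=0$ (because $2\theta-\alpha_*$ is not a root), so $x_{-\alpha_*}(1)$ annihilates both $x_\theta(-1)^{k+1}\mathbf 1$ and $x_\theta(-2)x_\theta(-1)^{k}\mathbf 1$. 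Collecting the two surviving terms gives exactly $q_{(k+2)\theta-\alpha_*}$.

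For the second identity I proceed in steps. (a) A computation of the same kind shows $x_\alpha(0)\,q_{(k+2)\theta}=0$ for every positive root $\alpha$, so $q_{(k+2)\theta}$ is a highest weight vector for $\mathfrak g$ of weight $(k+2)\theta$; likewise $q_{(k+2)\theta-\alpha_*}$ and $q_{(k+1)\theta}$ are highest weight vectors for $\mathfrak g$, and, using Proposition~\ref{P:7.3} and Proposition~\ref{P:7.2} together with a degree count, the $L_0$-eigenspace of eigenvalue $k+2$ in $\ker\Psi$ is $V(\lambda)+V((k+1)\theta)$, where $\lambda=(k+2)\theta-\alpha_*$. (b) I then compute $\xi:=x_{-\theta}(1)\,q_{(k+2)\theta}$ explicitly from (\ref{E:7.2}); now $\langle x_{-\theta},x_\theta\rangle\neq0$, so the central element contributes the level $k$, and the $\mathfrak{sl}_2$-identity above is used. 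The only contribution along the subspace $1\otimes\bar R\,\mathbf 1$ comes from the bracket $[x_{-\theta}(1),x_\theta(-1)]=-h_\theta(0)+c$ acting on $x_\theta(-2)x_\theta(-1)^{k}\mathbf 1$, a weight $(k+1)\theta$ vector on which $h_\theta(0)$ acts by $2(k+1)$, giving the term $(k+2)\cdot 1\otimes x_\theta(-2)x_\theta(-1)^{k}\mathbf 1$. (c) Since $\xi\in\ker\Psi$ has $L_0$-eigenvalue $k+2$ and weight $(k+1)\theta$, by (a) it equals $a\,q_{(k+1)\theta}+\eta$ with $\eta$ a weight $(k+1)\theta$ vector of $V(\lambda)$; neither $\eta$ nor the Sugawara summand of $q_{(k+1)\theta}$ has any component along $1\otimes\bar R\,\mathbf 1$, whereas that component of $q_{(k+1)\theta}$ is $-(k+1)\cdot 1\otimes x_\theta(-2)x_\theta(-1)^{k}\mathbf 1$ (coming from $-1\otimes D x_\theta(-1)^{k+1}\mathbf 1$), so (b) forces $a=-\tfrac{k+2}{k+1}$. (d) Finally $\Omega$ acts on $V(\nu)$ by the scalar $(\nu+2\rho,\nu)$; with $(\theta,\theta)=2$, $(\theta,\alpha_*)=1$, $(\rho,\theta)=g^\vee-1$ and $(\rho,\alpha_*)=\tfrac12(\alpha_*,\alpha_*)$ (so that the $(\alpha_*,\alpha_*)$-terms cancel) one gets $(\lambda+2\rho,\lambda)=2(k+2)(k+g^\vee)$ and $((k+1)\theta+2\rho,(k+1)\theta)=2(k+1)(k+g^\vee)$, hence $\Omega-(\lambda+2\rho,\lambda)$ kills $\eta$ and multiplies $q_{(k+1)\theta}$ by $-2(k+g^\vee)$. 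Therefore $\big(\Omega-(\lambda+2\rho,\lambda)\big)\,x_{-\theta}(1)\,q_{(k+2)\theta}=\tfrac{2(k+2)(k+g^\vee)}{k+1}\,q_{(k+1)\theta}$, which is the asserted formula.

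The step I expect to be the main obstacle is (c): establishing that no $\mathfrak g$-isotypic component other than $V(\lambda)$ and $V((k+1)\theta)$ can occur and pinning down the scalar $a$ — this is precisely where the Sugawara normalization $1/(k+g^\vee)$ must interlock with the Casimir eigenvalues. If the clean degree argument via Proposition~\ref{P:7.2} proves awkward, one can instead use the factorization $x_{-\theta}(1)=\mathrm{const}\cdot[\,x_{-(\theta-\alpha_*)}(0),\,x_{-\alpha_*}(1)\,]$, which rewrites $\xi$ as $x_{-(\theta-\alpha_*)}(0)\,q_{(k+2)\theta-\alpha_*}$ (manifestly in $V(\lambda)$) plus $x_{-\alpha_*}(1)$ applied to a vector of $V((k+2)\theta)$, on which $\Omega$ is again under control, or else verify the value of $a$ by a direct PBW expansion of $\xi$, $q_{(k+1)\theta}$ and $x_{-(\theta-\alpha_*)}(0)\,q_{(k+2)\theta-\alpha_*}$ in the weight $(k+1)\theta$ subspace.
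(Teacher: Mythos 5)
Your proposal is correct, and it follows exactly the route the paper indicates: Lemma~\ref{L:7.5} is stated without proof, immediately after the remark that both singular vectors are obtained from $q_{(k+2)\theta}$ by the action of $\tilde{\mathfrak g}_{\geq 0}$ on $\ker\Psi$, which is precisely your computation. The individual steps all check out --- the bracket calculus giving $x_{-\alpha_*}(1)q_{(k+2)\theta}=q_{(k+2)\theta-\alpha_*}$, the identification of the degree-$(k+2)$ part of $\ker\Psi$ as $V(\lambda)\oplus V((k+1)\theta)$ via Propositions~\ref{P:7.2} and \ref{P:7.3}, the comparison of $1\otimes\bar R\,\mathbf 1$-components forcing $a=-\tfrac{k+2}{k+1}$, and the Casimir eigenvalues $2(k+2)(k+g^\vee)$ and $2(k+1)(k+g^\vee)$ using $(\theta,\alpha_*)=1$.
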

For any untwisted affine Lie algebra $\tilde{\mathfrak g}$,
including ${\mathfrak sl}(2,\mathbb C)\,\widetilde{}$\,, the
$(\mathbf C L_{-1}\ltimes\tilde{\mathfrak g})$-module $\ker \Psi$
is generated by the vector $q_{(k+2)\theta}$. This generator plays
an important role in combinatorial applications.

\section{Leading terms}

The associative algebra $\mathcal U=U(\hat{\mathfrak g})/(c-k)$
inherits {}from $U(\hat{\mathfrak g})$ the filtration $\mathcal
U_\ell$, $\ell\in\mathbb Z_{\geq 0}$; let us denote by $\mathcal
S\cong S(\bar{\mathfrak g})$ the corresponding commutative graded
algebra.

Let $B$ be a basis of ${\mathfrak g}$. We fix the basis $\tilde B$
of $\tilde{\mathfrak g}$,
$$
\tilde B=\bar B\cup\{c, d\},\quad \bar B=\bigcup_{j\in\mathbb Z}
B\otimes t^j,
$$
so that $\bar B$ may also be viewed as a basis of $\bar{\mathfrak
g}=\hat{\mathfrak g}/\mathbb C c$. Let $\preceq$ be a linear order
on $\bar B$ such that
$$
i<j\quad\text{implies}\quad x(i)\prec y(j).
$$

 The symmetric algebra $\mathcal S$ has a basis $\mathcal P$
consisting of monomials in basis elements $\bar B$. Elements
$\pi\in\mathcal P$ are finite products of the form
$$
\pi=\prod_{i=1}^\ell b_i(j_i), \quad b_i(j_i)\in\bar B,
$$
and we shall say that $\pi$ is a colored partition of degree
$|\pi|=\sum_{i=1}^\ell j_i\in \mathbb Z$ and length $\ell
(\pi)=\ell$, with parts $b_i(j_i)$ of degree $j_i$ and color
$b_i$. We shall usually assume that parts of $\pi$ are indexed so
that
$$
b_1(j_1)\preceq b_2(j_2)\preceq \dots\preceq b_\ell(j_\ell).
$$
We associate with a colored partition $\pi$  its shape
$\text{sh\,}\pi$, the ``plain'' partition
$$
j_1\leq j_2\leq \dots\leq j_\ell.
$$
The basis element $1\in\mathcal P$ we call the colored partition
of degree 0 and length 0, we may also denote it by $\varnothing$,
suggesting it has no parts. The set of all colored partitions of
degree $n$ and length $\ell$ is denoted as $\mathcal P^\ell(n)$.
The set of all colored partitions with parts $b_i(j_i)$ of degree
$j_i<0$ (respectively $j_i\leq 0$) is denoted as $\mathcal P_{<0}$
(respectively $\mathcal P_{\leq 0}$).

Note that $\mathcal P\subset\mathcal S$ is a monoid with the unit
element 1, the product of monomials $\kappa$ and $\rho$ is denoted
by $\kappa\rho$. For colored partitions $\kappa$, $\rho$ and
$\pi=\kappa\rho$ we shall write $\kappa=\pi/\rho$ and
$\rho\subset\pi$. We shall say that $\rho\subset\pi$ is an
embedding (of $\rho$ in $\pi$), notation suggesting that $\pi$
``contains'' all the parts of $\rho$.

We shall fix a monomial basis
$$
u(\pi)=b_1(j_1)b_2(j_2) \dots b_n(j_\ell), \quad \pi\in\mathcal P,
$$
of the enveloping algebra $\mathcal U$.

Clearly $\bar B\subset\mathcal P$, viewed as colored partitions of
length 1. We assume that on $\mathcal P$ we have a linear order
$\preceq$ which extends the order $\preceq$ on $\bar B$. Moreover,
we assume that order $\preceq$ on $\mathcal P$ has the following
properties:
\begin{itemize}
\item  $\ell(\pi)>\ell(\kappa)$ implies $\pi\prec\kappa$.
\item  $\ell(\pi)=\ell(\kappa)$, $|\pi|<|\kappa|$ implies $\pi\prec\kappa$.
\item  Let $\ell(\pi)=\ell(\kappa)$, $|\pi|=|\kappa|$. Let $\pi$ be a partition
$b_1(j_1)\preceq b_2(j_2)\preceq \dots\preceq b_\ell(j_\ell)$ and
$\kappa$ a partition $a_1(i_1)\preceq a_2(i_2)\preceq \dots\preceq
a_\ell(i_\ell)$. Then $\pi\preceq \kappa$ implies $j_\ell\leq
i_\ell$.
\item  Let $\ell \ge 0$, $n \in  \mathbb Z$ and let
$S \subset \mathcal P$ be a nonempty subset such that all $\pi$ in
$S$ have length $\ell(\pi) \le \ell$ and degree $\vert \pi \vert =
n$. Then $S$ has a minimal element.
\item   $\mu  \preceq  \nu$ implies
$\pi\mu  \preceq \pi\nu$.
\item  The relation $\pi\prec\kappa$ is a well order on $\mathcal P_{\leq 0}$.
\end{itemize}

\begin{remark}
An order with these properties is used in \cite{MP1}; colored
partitions are compared first by length and degree, and then by
comparing degrees of parts and colors of parts in the reverse
lexicographical order.
\end{remark}

For $\pi\in \mathcal P$, $|\pi|=n$, set

$$
U_{[\pi]}^\mathcal P =\overline{\mathbb C\text{-span}\{u(\pi')\mid
|\pi'|=|\pi|, \pi' \succeq\pi\}}\ ,
$$

$$
U_{(\pi)}^\mathcal P  =\overline{\mathbb
C\text{-span}\{u(\pi')\mid |\pi'|=|\pi|, \pi' \succ \pi\}}\ ,
$$

\noindent
 the closure taken in $\overline{\mathcal U(n)}$. Set
$$
U^\mathcal P (n)=  \bigcup_{\pi\in \mathcal P, \ |\pi|=n}
U_{[\pi]}^\mathcal P ,\qquad U^\mathcal P =\coprod_{n\in\mathbb
Z}U^\mathcal P (n) \subset\overline{\mathcal U}.
$$
The construction of $U^\mathcal P$ depends on a choice of
$(\mathcal P, \preceq)$. Since by assumption $\mu  \preceq  \nu$
implies $\pi\mu  \preceq \pi\nu$, we have that $U^\mathcal P $ is
a subalgebra of $\overline{\mathcal U}$. Moreover, we have a
sequence of subalgebras:
\begin{proposition}
\label{P:8.1} \quad$\mathcal U\subset U \subset U^\mathcal P
\subset \overline{\mathcal U}$.
\end{proposition}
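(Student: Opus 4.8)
The plan is as follows. The inclusion $\mathcal U\subset U$ was already recorded in Section~4, and $U^{\mathcal P}\subset\overline{\mathcal U}$ holds by construction, since each $U_{[\pi]}^{\mathcal P}$ is defined as a subspace of $\overline{\mathcal U(n)}$; so the content of the proposition is the middle inclusion $U\subset U^{\mathcal P}$. Because $U^{\mathcal P}$ is a subalgebra of $\overline{\mathcal U}$ (observed above, using $\mu\preceq\nu\Rightarrow\pi\mu\preceq\pi\nu$) and $U$ is by definition the associative algebra generated by $U_{\text{loc}}=\mathbb C\text{-span}\{v_n\mid v\in V,\ n\in\mathbb Z\}$, it suffices to prove $U_{\text{loc}}\subset U^{\mathcal P}$; and since $v\mapsto Y(v,z)$ is linear and $\{u(\pi)\mathbf 1\mid\pi\in\mathcal P_{<0}\}$ is a basis of $V=N(k\Lambda_0)$, this reduces to showing $(u(\pi)\mathbf 1)_n\in U^{\mathcal P}$ for every $\pi\in\mathcal P_{<0}$ and $n\in\mathbb Z$.

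I would prove this by induction on $\ell(\pi)$, in the sharper form that $(u(\pi)\mathbf 1)_n\in U_{[\kappa(\pi,n)]}^{\mathcal P}$ for a suitable colored partition $\kappa(\pi,n)$ of degree $n$ --- its ``leading term'' --- as in the $\mathfrak{sl}(2,\mathbb C)$ analysis of \cite{MP1}. For $\ell(\pi)=0$ we have $(\mathbf 1)_n=\delta_{n,-1}\,\text{id}$, and for $\ell(\pi)=1$, say $u(\pi)=b(-j)$, the identity $Y(b(-j)\mathbf 1,z)=\tfrac{1}{(j-1)!}\,\partial_z^{\,j-1}b(z)$ exhibits $(u(\pi)\mathbf 1)_n$ as a scalar multiple of the length-one partition $b(n-j+1)$. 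For the inductive step, write $\pi=b(-j)\,\rho$ with $b(-j)$ the smallest part of $\pi$ and $\rho=\pi/b(-j)\in\mathcal P_{<0}$, so that $u(\pi)=b(-j)\,u(\rho)$ in the fixed monomial basis and $u(\pi)\mathbf 1=(b(-1)\mathbf 1)_{-j}(u(\rho)\mathbf 1)$; the iterate (associativity) formula for vertex operators then gives
$$
Y(u(\pi)\mathbf 1,z)=b(z)_{-j}\,Y(u(\rho)\mathbf 1,z)=\text{Res}_{z_1}\Big((z_1-z)^{-j}b(z_1)Y(u(\rho)\mathbf 1,z)-(-z+z_1)^{-j}Y(u(\rho)\mathbf 1,z)b(z_1)\Big).
$$
Extracting the coefficient of $z^{-n-1}$ shows that $(u(\pi)\mathbf 1)_n$ is an infinite linear combination of the operators $b(p)(u(\rho)\mathbf 1)_{n-p}$ with $p\le -j$ and $(u(\rho)\mathbf 1)_{n-p}b(p)$ with $p\ge 0$; by the inductive hypothesis each $(u(\rho)\mathbf 1)_{n-p}$ lies in $U^{\mathcal P}$, and as $\mathcal U\subset U^{\mathcal P}$ and $U^{\mathcal P}$ is a subalgebra, every individual summand lies in $U^{\mathcal P}$.

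The substantive point --- and the step I expect to be the main obstacle --- is to show that this infinite sum converges inside a \emph{single} subspace $U_{[\kappa]}^{\mathcal P}$, i.e.\ that the support of $(u(\pi)\mathbf 1)_n$ in the monomial basis $\{u(\pi')\}$ is bounded below in $(\mathcal P,\preceq)$. This is precisely the ``leading term'' bookkeeping of \cite{MP1}: since $\mathcal U$ is an enveloping algebra, reordering $b(p)\,u(\sigma)$ into the basis $\{u(\pi')\}$ yields the length-$(\ell(\sigma)+1)$ monomial on the colored partition $b(p)\sigma$ together with strictly shorter --- hence, in $\preceq$, strictly larger --- corrections, so the support of $(u(\pi)\mathbf 1)_n$ has bounded length; and for each fixed length and the fixed total degree $n$, the order axioms (comparison by length, then degree, then reverse-lexicographic comparison of degrees and colors of the parts), together with the fact that the part degrees run off to $\pm\infty$ as $|p|\to\infty$, force the occurring partitions to be $\succeq\kappa(\pi,n)$, where one takes $\kappa(\pi,n)$ to be the minimum of the $b(p)\,\kappa(\rho,n-p)$. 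Since these monomials moreover tend to $0$ in $\overline{\mathcal U(n)}$ --- they lie in $W_{n-p}(n)$, resp.\ $W_{p}(n)$, with $n-p\to\infty$, resp.\ $p\to\infty$ --- we get $(u(\pi)\mathbf 1)_n\in U_{[\kappa(\pi,n)]}^{\mathcal P}\subset U^{\mathcal P}$, completing the induction and the proof.
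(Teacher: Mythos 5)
Your proposal is correct and follows the route the paper itself (implicitly, deferring to \cite{MP1}) relies on: $\mathcal U\subset U$ and $U^{\mathcal P}\subset\overline{\mathcal U}$ hold by construction, and the only real content is $U_{\text{loc}}\subset U^{\mathcal P}$, which you establish by the standard iterate-formula induction showing each coefficient $(u(\pi)\mathbf 1)_n$ converges inside a single $U^{\mathcal P}_{[\kappa]}$. The paper states the proposition without proof, so your argument simply supplies the details it leaves to the reader.
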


As in \cite{MP1}, we have:
\begin{lemma}
\label{L:8.2} For $\pi \in \mathcal P $ we have
$U_{[\pi]}^\mathcal P =\mathbb C u(\pi)+ U_{(\pi)}^\mathcal P $.
Moreover,
$$
\dim U_{[\pi]}^\mathcal P / U_{(\pi)}^\mathcal P = 1.
$$
\end{lemma}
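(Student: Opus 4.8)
The plan is to produce on $U^{\mathcal P}_{[\pi]}$ a well-defined ``coefficient of $u(\pi)$'', to show that subtracting the corresponding multiple of $u(\pi)$ sends every element of $U^{\mathcal P}_{[\pi]}$ into $U^{\mathcal P}_{(\pi)}$, and to read off from this both the equality $U^{\mathcal P}_{[\pi]}=\mathbb Cu(\pi)+U^{\mathcal P}_{(\pi)}$ and the fact that $u(\pi)\notin U^{\mathcal P}_{(\pi)}$ (which gives the dimension statement).

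The technical heart, and the step I expect to be the main obstacle, is the interaction of $W_p(n)$ with the PBW basis. For a colored partition $\kappa$ let $\delta^+(\kappa)$ be the sum of the degrees of those parts of $\kappa$ of positive degree. If $\delta^+(\kappa)\ge p$, then grouping the positive parts on the right exhibits $u(\kappa)$ as a product of an element of $\mathcal U(|\kappa|-\delta^+(\kappa))$ with an element of $\mathcal U(\delta^+(\kappa))$, so $u(\kappa)\in W_p(|\kappa|)$. The essential point is the refinement in the other direction: when $\deg\nu=i$, reordering $u(\mu)u(\nu)$ into the basis $\{u(\tau)\}$ produces only $\tau$ with $\delta^+(\tau)\ge i$, because in the relevant straightening the positive-degree ``mass'' carried by $\nu$ (which is $\ge|\nu|=i$) is never lost --- a positive part is only ever commuted past, or merged with, other positive parts, and never interacts with a part of negative degree. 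Granting this, $W_p(n)=\mathbb C\text{-span}\{u(\tau)\mid |\tau|=n,\ \delta^+(\tau)\ge p\}$; in particular $W_p(n)$ is spanned by basis monomials, and the images of the remaining monomials $u(\tau)$, $|\tau|=n$, $\delta^+(\tau)<p$, form a basis of $\mathcal U(n)/W_p(n)$. This is the $C_n^{(1)}$ analogue of the corresponding fact in \cite{MP1}, proved by the same PBW straightening; everything after it is formal.

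Fix $\pi$ and put $n=|\pi|$. Since $\bigcap_p W_p(n)=0$ and the $W_p(n)$ decrease with $p$, there is $p_0$ with $u(\pi)\notin W_p(n)$ for all $p\ge p_0$. For such $p$ the set $F_p=\{\pi'\mid |\pi'|=n,\ \pi'\succeq\pi,\ u(\pi')\notin W_p(n)\}$ is finite and contains $\pi$: indeed $\pi'\succeq\pi$ forces $\ell(\pi')\le\ell(\pi)$ by the first property of $\preceq$, and $u(\pi')\notin W_p(n)$ forces $\delta^+(\pi')<p$ by the previous paragraph, so $\pi'$ has at most $\ell(\pi)$ positive parts (each of degree in $[1,p-1]$), a bounded number of negative parts of bounded magnitude, at most $\ell(\pi)$ parts of degree $0$, and colors from the finite set $B$; here the bound on the length is exactly what excludes arbitrarily many parts of degree $0$.

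Now take $x\in U^{\mathcal P}_{[\pi]}$ and $p\ge p_0$. The element $x$ is a limit of elements of $\mathbb C\text{-span}\{u(\pi')\mid |\pi'|=n,\ \pi'\succeq\pi\}$, and the natural map from $\overline{\mathcal U(n)}$ onto the discrete group $\mathcal U(n)/W_p(n)$ is continuous, so the image of $x$ there coincides with the image of one such finite sum; since $W_p(n)$ is spanned by basis monomials and $F_p$ is finite, that image equals $\sum_{\pi'\in F_p}c_{\pi'}\,\overline{u(\pi')}$ for uniquely determined scalars $c_{\pi'}$, the $\overline{u(\pi')}$ ($\pi'\in F_p$) being distinct basis vectors of $\mathcal U(n)/W_p(n)$. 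Compatibility of these quotients as $p$ increases, together with the survival of $u(\pi)$ in each of them, shows that the scalar $c(x):=c_\pi$ does not depend on $p$. I then check that $x-c(x)u(\pi)\in U^{\mathcal P}_{(\pi)}$: for every $p$ it lies in $\mathbb C\text{-span}\{u(\pi')\mid \pi'\succeq\pi\}+W_p(n)$ with vanishing coefficient on $u(\pi)$ in the first summand, whence, discarding $u(\pi)$ and absorbing into $W_p(n)$ each $u(\pi')$ that already lies in $W_p(n)$, it lies in $\mathbb C\text{-span}\{u(\pi')\mid \pi'\succ\pi\}+W_p(n)$; as $p$ is arbitrary this gives $x-c(x)u(\pi)\in U^{\mathcal P}_{(\pi)}$. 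The reverse inclusion $\mathbb Cu(\pi)+U^{\mathcal P}_{(\pi)}\subseteq U^{\mathcal P}_{[\pi]}$ being immediate, we conclude $U^{\mathcal P}_{[\pi]}=\mathbb Cu(\pi)+U^{\mathcal P}_{(\pi)}$; and if $u(\pi)$ belonged to $U^{\mathcal P}_{(\pi)}$ the same computation would force $c(u(\pi))=0$, contradicting $c(u(\pi))=1$, so $u(\pi)\notin U^{\mathcal P}_{(\pi)}$ and therefore $\dim U^{\mathcal P}_{[\pi]}/U^{\mathcal P}_{(\pi)}=1$.
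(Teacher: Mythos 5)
Your argument is correct and is essentially the one the paper delegates to \cite{MP1} with the phrase ``As in [MP1], we have''; the formal steps (finiteness of $F_p$, extraction of the coefficient $c(x)$ from the compatible discrete quotients $\mathcal U(n)/W_p(n)$, and the two conclusions) all go through. The only point I would tighten is your justification of the key identity $W_p(n)=\mathbb C\text{-span}\{u(\tau)\mid |\tau|=n,\ \delta^+(\tau)\ge p\}$: the assertion that in straightening ``a positive part never interacts with a part of negative degree'' is not literally true, since commuting $x(a)$ with $a>0$ past $y(b)$ with $b<0$ produces $[x,y](a+b)$, whose degree may be $\le 0$, plus a central term. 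The identity itself is nevertheless correct, and the clean way to see it is via the triangular decomposition: the right-hand span equals $\sum_{j\ge p}\mathcal U\cdot U_j$, where $U_j$ denotes the degree-$j$ component of $U\big(\coprod_{m\ge 1}{\mathfrak g}\otimes t^{m}\big)$; this is a left ideal of $\mathcal U$, and it contains $\mathcal U(i)$ for every $i\ge p$ because a PBW monomial of total degree $i>0$ must end in a positive tail of degree at least $i$. With that repair the proof is complete.
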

\noindent For $u \in U_{[\pi]}^\mathcal P $, $u \notin
U_{(\pi)}^\mathcal P $ we define the {\it leading term}
$$
\ell \!\text{{\it t\,}}(u) = \pi.
$$

\begin{proposition}
\label{P:8.3}  Every element
 $u \in U^\mathcal P (n)$, $u \neq 0$, has a unique leading
 term $\ell \!\text{{\it t\,}}(u)$.
\end{proposition}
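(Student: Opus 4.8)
The plan is to obtain both assertions from Lemma~\ref{L:8.2} together with the well-ordering property of $\preceq$ on colored partitions of fixed degree and bounded length. I would dispose of uniqueness first, since it is immediate: if a nonzero $u\in U^{\mathcal P}(n)$ were to lie in both $U_{[\pi_1]}^{\mathcal P}\setminus U_{(\pi_1)}^{\mathcal P}$ and $U_{[\pi_2]}^{\mathcal P}\setminus U_{(\pi_2)}^{\mathcal P}$ with $\pi_1\prec\pi_2$ (the order being linear), then every $\pi'$ with $\pi'\succeq\pi_2$ would satisfy $\pi'\succ\pi_1$, whence $U_{[\pi_2]}^{\mathcal P}\subseteq U_{(\pi_1)}^{\mathcal P}$ and so $u\in U_{(\pi_1)}^{\mathcal P}$, a contradiction. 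The same argument shows that the leading term $\ell \!\text{{\it t\,}}(u)$ defined just before the statement does not depend on the choice of $\pi$ with $u\in U_{[\pi]}^{\mathcal P}\setminus U_{(\pi)}^{\mathcal P}$.

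For existence I would start from some $\pi_0$ with $u\in U_{[\pi_0]}^{\mathcal P}$, available by the definition of $U^{\mathcal P}(n)$. The first of the listed properties of $\preceq$ gives $\ell(\pi')\le\ell(\pi_0)$ for every $\pi'$ with $|\pi'|=n$ and $\pi'\succeq\pi_0$, so $U_{[\pi_0]}^{\mathcal P}$ is the closure of the span of the monomials $u(\pi')$, $|\pi'|=n$, $\pi'\succeq\pi_0$, all of bounded length. Since for colored partitions of a fixed degree and bounded length only finitely many $u(\pi')$ lie outside a given $W_p(n)$, these monomials are topologically linearly independent in $\overline{\mathcal U(n)}$, and $u$ has a unique convergent expansion $u=\sum_{\pi'}c_{\pi'}u(\pi')$ over this index set. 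I would then set $T=\{\pi':c_{\pi'}\neq 0\}$, which is nonempty because $u\neq 0$ and consists of colored partitions of degree $n$ and length $\le\ell(\pi_0)$, and apply the well-ordering property to pick the minimal element $\pi^{*}$ of $T$.

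It remains to check $\ell \!\text{{\it t\,}}(u)=\pi^{*}$. Every $\pi'\in T$ satisfies $\pi'\succeq\pi^{*}$, so $u\in U_{[\pi^{*}]}^{\mathcal P}$; and in the splitting $u=c_{\pi^{*}}u(\pi^{*})+\sum_{\pi'\in T,\ \pi'\neq\pi^{*}}c_{\pi'}u(\pi')$ minimality forces each remaining $\pi'\succ\pi^{*}$, so $u-c_{\pi^{*}}u(\pi^{*})\in U_{(\pi^{*})}^{\mathcal P}$ with $c_{\pi^{*}}\neq 0$. If $u$ were in $U_{(\pi^{*})}^{\mathcal P}$ this would put $u(\pi^{*})$ in $U_{(\pi^{*})}^{\mathcal P}$, contradicting $\dim U_{[\pi^{*}]}^{\mathcal P}/U_{(\pi^{*})}^{\mathcal P}=1$ from Lemma~\ref{L:8.2}. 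Hence $u\in U_{[\pi^{*}]}^{\mathcal P}\setminus U_{(\pi^{*})}^{\mathcal P}$, i.e.\ $\ell \!\text{{\it t\,}}(u)=\pi^{*}$, and uniqueness was already established above.

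The step I expect to be the real work is the topological linear independence of the PBW monomials $u(\pi')$ inside the completion — that membership of $u$ in $U_{[\pi_0]}^{\mathcal P}$ genuinely confines its expansion to partitions $\pi'\succeq\pi_0$, and in particular to partitions of length $\le\ell(\pi_0)$, so that the well-ordering property of $\preceq$ can be applied to the support $T$. This rests on the explicit form of the fundamental neighbourhoods $W_p(n)$ of \eqref{E:4.1} and on the PBW basis of $\mathcal U$; once it is granted, everything else is order-theoretic bookkeeping around Lemma~\ref{L:8.2}.
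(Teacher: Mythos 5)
Your argument is correct, and it is essentially the standard one: the paper states Proposition~\ref{P:8.3} without proof, recalling it from \cite{MP1}, and what you have written is a faithful reconstruction of that argument (uniqueness from the linearity of $\preceq$ and the inclusion $U_{[\pi_2]}^{\mathcal P}\subseteq U_{(\pi_1)}^{\mathcal P}$ for $\pi_1\prec\pi_2$; existence by expanding $u$ in PBW monomials, bounding the lengths via the first property of $\preceq$, and taking the minimal element of the support). You have also correctly isolated the one step that is not pure order-theoretic bookkeeping: that membership of $u$ in the closure $U_{[\pi_0]}^{\mathcal P}$ really does give a \emph{unique} convergent expansion $u=\sum c_{\pi'}u(\pi')$ supported on $\{\pi'\succeq\pi_0\}$. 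Note that this needs more than the observation that all but finitely many monomials of fixed degree and bounded length lie in a given $W_p(n)$ (which gives convergence); one must also know that $W_p(n)$, and hence its closure, meets the span of these monomials only in the span of those monomials it already contains, so that the coefficients are well defined. That compatibility of the filtration \eqref{E:4.1} with the PBW basis is exactly the content of the analysis of the completion in \cite{MP1}, on which Lemma~\ref{L:8.2} also rests, so within the conventions of this paper your proof is complete.
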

By Proposition~\ref{P:8.3} every nonzero homogeneous $u$ has the
unique leading term. For a nonzero element $u\in U^\mathcal P $ we
define the leading term $\ell \!\text{{\it t\,}}(u)$ as the
leading term of the nonzero homogeneous component of $u$ of
smallest degree. For a subset $S\subset U^\mathcal P $ set
$$
\ell \!\text{{\it t\,}}(S)=\{\ell \!\text{{\it t\,}}(u)\mid u\in
S, u\neq 0\}.
$$
We are interested mainly in leading terms of elements in $U
\subset U^\mathcal P$, which have the following properties:
\begin{proposition}
\label{P:8.4} For all $u, v \in U^\mathcal P \backslash\{0\}$ we
have $\ell \!\text{{\it t\,}}(uv)=\ell \!\text{{\it t\,}}(u)\ell
\!\text{{\it t\,}}(v)$.
\end{proposition}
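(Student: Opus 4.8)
The plan is to reduce the statement first to homogeneous $u$ and $v$, then to a single computation about products of the fixed PBW monomials $u(\pi)$, and then to push that computation through the definitions of $U^{\mathcal P}_{[\pi]}$, $U^{\mathcal P}_{(\pi)}$ and the order axioms on $\mathcal P$. For the \emph{reduction}, write $u=\sum_{j\ge n}u_j$ and $v=\sum_{j\ge m}v_j$ with $u_n,v_m$ the lowest nonzero homogeneous components, so $\ell\!\text{{\it t\,}}(u)=\ell\!\text{{\it t\,}}(u_n)$ and $\ell\!\text{{\it t\,}}(v)=\ell\!\text{{\it t\,}}(v_m)$ by the definition of leading term for inhomogeneous elements. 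The homogeneous component of $uv$ of least possible degree $n+m$ is $u_nv_m$; granting the homogeneous case, $\ell\!\text{{\it t\,}}(u_nv_m)=\ell\!\text{{\it t\,}}(u_n)\ell\!\text{{\it t\,}}(v_m)$ is a well-defined colored partition, so in particular $u_nv_m\ne 0$, whence it is the lowest nonzero homogeneous component of $uv$ and $\ell\!\text{{\it t\,}}(uv)=\ell\!\text{{\it t\,}}(u_nv_m)=\ell\!\text{{\it t\,}}(u)\ell\!\text{{\it t\,}}(v)$. Thus from now on $u,v$ are homogeneous with $\ell\!\text{{\it t\,}}(u)=\pi$, $\ell\!\text{{\it t\,}}(v)=\rho$, and by Lemma~\ref{L:8.2} it suffices to show $uv\equiv c\,u(\pi\rho)\pmod{U^{\mathcal P}_{(\pi\rho)}}$ for some $c\ne 0$.

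The computational core is the behaviour of products of PBW monomials: for colored partitions $\kappa,\sigma$ one has
$$
u(\kappa)u(\sigma)=u(\kappa\sigma)+\sum_\tau c_\tau\,u(\tau),
$$
the sum over $\tau$ with $|\tau|=|\kappa|+|\sigma|$ and $\ell(\tau)<\ell(\kappa)+\ell(\sigma)=\ell(\kappa\sigma)$. This is the standard PBW statement for $\mathcal U=U(\hat{\mathfrak g})/(c-k)$: in the associated graded $\mathcal S\cong S(\bar{\mathfrak g})$ the symbol of $u(\kappa)u(\sigma)$ in filtration degree $\ell(\kappa)+\ell(\sigma)$ is the product of the two symbols, namely the monomial $\kappa\sigma\in\mathcal P$, while the correction terms come from commutators $[x(i),y(j)]=[x,y](i+j)+i\delta_{i+j,0}\langle x,y\rangle c$ with $c$ acting as the scalar $k$, each of which strictly shortens the length while preserving the total $d$-degree $|\kappa|+|\sigma|$. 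Since $\ell(\tau)<\ell(\kappa\sigma)$ forces $\tau\succ\kappa\sigma$ (longer colored partitions being $\prec$-smaller), every correction term lies in $U^{\mathcal P}_{(\kappa\sigma)}$, so $u(\kappa)u(\sigma)\in\mathbb C u(\kappa\sigma)+U^{\mathcal P}_{(\kappa\sigma)}$ and $\ell\!\text{{\it t\,}}(u(\kappa)u(\sigma))=\kappa\sigma$. Separately, the monotonicity axiom $\mu\preceq\nu\Rightarrow\pi\mu\preceq\pi\nu$ together with cancellativity of the free commutative monoid $\mathcal P$ on $\bar B$ gives: if $\kappa\preceq\kappa'$ and $\sigma\preceq\sigma'$ (equal degrees) then $\kappa\sigma\preceq\kappa'\sigma'$, with strict inequality whenever either of the two is strict.

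Now apply this. By Lemma~\ref{L:8.2} write $u=c_1u(\pi)+u'$ and $v=c_2u(\rho)+v'$ with $c_1,c_2\ne 0$, $u'\in U^{\mathcal P}_{(\pi)}$, $v'\in U^{\mathcal P}_{(\rho)}$, and expand
$$
uv=c_1c_2\,u(\pi)u(\rho)+c_1\,u(\pi)v'+c_2\,u'u(\rho)+u'v'.
$$
The first summand is $c_1c_2\,u(\pi\rho)$ modulo $U^{\mathcal P}_{(\pi\rho)}$ by the displayed identity. For the other three, $u'$ is a limit of finite combinations of monomials $u(\pi')$ with $\pi'\succ\pi$ and $v'$ a limit of finite combinations of $u(\rho')$ with $\rho'\succ\rho$; the monomial products occurring are $u(\pi)u(\rho')$, $u(\pi')u(\rho)$, $u(\pi')u(\rho')$, with leading terms $\pi\rho'$, $\pi'\rho$, $\pi'\rho'$ respectively, each strictly $\succ\pi\rho$ by the strict monotonicity above, hence each such product lies in $U^{\mathcal P}_{(\pi\rho)}$. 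Because multiplication in the topological ring $\overline{\mathcal U}$ is continuous and $U^{\mathcal P}_{(\pi\rho)}\subset\overline{\mathcal U(n+m)}$ is closed, the three terms $c_1u(\pi)v'$, $c_2u'u(\rho)$, $u'v'$ all lie in $U^{\mathcal P}_{(\pi\rho)}$. Therefore $uv\equiv c_1c_2\,u(\pi\rho)\pmod{U^{\mathcal P}_{(\pi\rho)}}$ with $c_1c_2\ne 0$; by Lemma~\ref{L:8.2} this means $uv\in U^{\mathcal P}_{[\pi\rho]}$, in particular $uv\ne 0$, and $\ell\!\text{{\it t\,}}(uv)=\pi\rho=\ell\!\text{{\it t\,}}(u)\ell\!\text{{\it t\,}}(v)$.

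The combinatorial half above---the PBW length estimate and the monoid bookkeeping with the order axioms---is routine. The point needing care is the topological one: an element of $U^{\mathcal P}_{(\pi)}$ is only presented as a limit, in the $W_p(n)$-topology, of finite combinations of the $u(\pi')$, and one must be sure that multiplying such limits term by term, in both factors at once, still lands in the \emph{closed} subspace $U^{\mathcal P}_{(\pi\rho)}$. This rests on continuity of multiplication and on closedness of the $U^{\mathcal P}_{(\pi)}$, both built into the construction (Proposition~\ref{P:8.1} and the definitions of $U^{\mathcal P}_{[\pi]}$ and $U^{\mathcal P}_{(\pi)}$), and should be carried out exactly as in the corresponding argument of \cite{MP1}.
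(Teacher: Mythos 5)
Your proof is correct and follows the standard argument: the paper itself states Proposition~\ref{P:8.4} without proof, deferring to \cite{MP1}, and your reduction to homogeneous elements, the PBW length-drop computation for $u(\kappa)u(\sigma)$, the strict monotonicity of $\preceq$ under multiplication via cancellativity of $\mathcal P$, and the closure/continuity argument for the cross terms are exactly the ingredients of that argument. No gaps.
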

\begin{proposition}
\label{P:8.5} Let $W\subset U^\mathcal P $ be a finite-dimensional
subspace and let \newline $\ell \!\text{{\it t\,}}(W)\rightarrow
W$ be a map such that
$$
\rho\mapsto w(\rho),\qquad \ell \!\text{{\it t\,}}\big(
w(\rho)\big)=\rho.
$$
Then $\{w(\rho)\mid \rho\in\ell \!\text{{\it t\,}}(W)\}$ is a
basis of $W$.
\end{proposition}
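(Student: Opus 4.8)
The plan is to prove the statement in two steps: first linear independence of $\{w(\rho)\mid\rho\in\ell t(W)\}$, which I will obtain as an instance of the general principle that \emph{nonzero elements of $U^{\mathcal P}$ with pairwise distinct leading terms are linearly independent}; and then that this family spans $W$, which I will obtain by a finite-dimensional ``leading-term reduction'' argument. Both steps come down to the one-dimensionality statement of Lemma~\ref{L:8.2} together with the minimal-element property of the order $\preceq$, applied to the appropriate homogeneous component.

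For linear independence, suppose $\sum_{i=1}^{r}c_iu_i=0$ with $u_1,\dots,u_r$ nonzero and $\ell t(u_1),\dots,\ell t(u_r)$ distinct, and let $I=\{i\mid c_i\neq0\}$; I want $I=\varnothing$. Set $m=\min\{\,|\ell t(u_i)|\mid i\in I\,\}$. Since $\ell t(u_i)$ is by definition the leading term of the lowest-degree nonzero homogeneous component of $u_i$, for $i\in I$ with $|\ell t(u_i)|>m$ the degree-$m$ component $(u_i)_m$ vanishes, whereas for $i\in I_m:=\{i\in I\mid|\ell t(u_i)|=m\}$ one has $(u_i)_m\neq0$ with $\ell t\big((u_i)_m\big)=\ell t(u_i)$; extracting the degree-$m$ part of the relation therefore gives $\sum_{i\in I_m}c_i(u_i)_m=0$ in $U^{\mathcal P}(m)$. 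The set $\{\ell t(u_i)\mid i\in I_m\}$ consists of colored partitions of the fixed degree $m$ and of bounded length, so by the minimal-element axiom for $\preceq$ it has a minimal element $\mu=\ell t(u_{i_0})$, and then $\ell t(u_i)\succ\mu$ for $i\in I_m\setminus\{i_0\}$, whence $(u_i)_m\in U_{[\ell t(u_i)]}^{\mathcal P}\subset U_{(\mu)}^{\mathcal P}$ directly from the definitions. Thus $c_{i_0}(u_{i_0})_m\in U_{(\mu)}^{\mathcal P}$ with $c_{i_0}\neq0$, contradicting $(u_{i_0})_m\notin U_{(\mu)}^{\mathcal P}$ by Lemma~\ref{L:8.2}. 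Applying this to one chosen representative $w(\rho)$ per $\rho\in\ell t(W)$ shows these elements are linearly independent in $W$, and hence also $|\ell t(W)|\leq\dim W<\infty$.

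For spanning, put $W'=\mathbb C\text{-span}\{w(\rho)\mid\rho\in\ell t(W)\}$ and assume $W'\subsetneq W$. Every nonzero element of $W$ has leading term in the finite set $\ell t(W)$, so I may choose $v\in W\setminus W'$ with $|\ell t(v)|$ maximal and, among such, with $\rho:=\ell t(v)$ maximal in $\preceq$; write $n=|\rho|$. Then $v$ has no nonzero component of degree $<n$ and $(v)_n\in U_{[\rho]}^{\mathcal P}\setminus U_{(\rho)}^{\mathcal P}$, and the same holds for $w(\rho)$; by Lemma~\ref{L:8.2} the images of $(v)_n$ and $(w(\rho))_n$ in the one-dimensional quotient $U_{[\rho]}^{\mathcal P}/U_{(\rho)}^{\mathcal P}$ are nonzero multiples of $u(\rho)$, so there is $c\in\mathbb C$ with $(v)_n-c\,(w(\rho))_n\in U_{(\rho)}^{\mathcal P}$. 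Then $v'=v-c\,w(\rho)$ lies in $W\setminus W'$ and is nonzero (else $v=c\,w(\rho)\in W'$), has no component of degree $<n$, and has $(v')_n\in U_{(\rho)}^{\mathcal P}$. Since a nonzero element of $U_{(\rho)}^{\mathcal P}$ has leading term $\succ\rho$ (immediate from the definitions and Proposition~\ref{P:8.3}), either $(v')_n\neq0$, forcing $|\ell t(v')|=n$ and $\ell t(v')\succ\rho$, contradicting maximality of $\rho$; or $(v')_n=0$, forcing $|\ell t(v')|>n$, contradicting maximality of $n$. Hence $W'=W$, and together with the first step $\{w(\rho)\mid\rho\in\ell t(W)\}$ is a basis of $W$.

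The only real subtlety is the bookkeeping across homogeneous degrees, since $W$ need not be homogeneous: in both steps one must first isolate the homogeneous component carrying the leading term and argue there, where the one-dimensionality of $U_{[\pi]}^{\mathcal P}/U_{(\pi)}^{\mathcal P}$ and the minimal-element property of $\preceq$ take over. I expect that the care needed is mostly in checking that the ``reducing'' step $v\mapsto v-c\,w(\rho)$ in the spanning argument genuinely increases the invariant $(|\ell t(v)|,\ell t(v))$ — which is why I split the case analysis according to whether the degree-$n$ component survives the subtraction; once this is phrased correctly, each individual step is short.
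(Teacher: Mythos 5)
Your proof is correct. The paper states Proposition~\ref{P:8.5} without proof (it is quoted from [MP1]), and your two-step argument --- linear independence of elements with pairwise distinct leading terms via the one-dimensionality of $U_{[\pi]}^{\mathcal P}/U_{(\pi)}^{\mathcal P}$ from Lemma~\ref{L:8.2}, followed by spanning via leading-term reduction on the invariant $(|\ell\!\text{{\it t\,}}(v)|,\ell\!\text{{\it t\,}}(v))$ --- is exactly the standard argument this framework is designed for, including the correct bookkeeping of passing to the lowest-degree homogeneous component before invoking Proposition~\ref{P:8.3}.
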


Since $R$ is finite-dimensional, the space $\bar R\subset U$ is a
direct sum of finite-dimension\-al homogeneous subspaces. Hence
Proposition~\ref{P:8.5} implies that we can parametrize a basis of
$\bar R$ by the set of leading terms $\ell \!\text{{\it t\,}}
(\bar R)$: we fix a map
$$
\ell \!\text{{\it t\,}} (\bar{R}) \rightarrow \bar R ,\quad\rho
\mapsto r(\rho)\quad \text{such that}\quad r(\rho)\in U(\vert \rho
\vert),\ \ell \!\text{{\it t\,}} (r(\rho)) = \rho,
$$
then $\{r(\rho)\mid \rho\in \ell \!\text{{\it t\,}} (\bar R)\}$ is
a basis of $\bar R$. We will assume that this map is such that the
coefficient $C$ of ``the leading term'' $u(\rho)$ in ``the
expansion'' of $r(\rho)=Cu(\rho)+\dots$ is chosen to be $C=1$.
Note that our assumption $R\subset N^1(k\Lambda_0)$ implies that
$1\not\in \ell \!\text{{\it t\,}} (\bar R)$ and that $ \ell
\!\text{{\it t\,}} (\bar R)\cdot\mathcal P$ is a proper ideal in
the monoid $\mathcal P$.

For an embedding $\rho\subset\pi$, where $\rho\in\ell \!\text{{\it
t\,}} (\bar R)$, we define the element $u(\rho\subset\pi)$ in $ U$
by
$$
u(\rho\subset\pi)=  u(\pi/\rho)r(\rho).
$$

\section{A rank theorem}

Let $a\in V$ be a homogeneous element. Then we have
$$
Y(a,z)=\sum_{n\in\mathbb Z}a(n)z^{-n-\text{wt}\, a}\,,\qquad
a(n)\in U(n).
$$
If $M$ is a level $k$ highest weight $\tilde{\mathfrak g}$-module,
then the action of coefficients $a(n)$ on $M$ makes $M$ a
$V$-module with vertex operators
$$
Y_M(a,z)=\sum_{n\in\mathbb Z}a(n)z^{-n-\text{wt}\, a}\,,\qquad
a(n)\in \text{End}\,M.
$$
Then $M\otimes M$ is a $V\otimes V$-module. For a homogeneous
element $q=a\otimes b$ the vertex operator is defined by
$$
Y_{M\otimes M}(q,z)= Y_M(a,z)\otimes Y_M(b,z)=\sum_{n\in\mathbb
Z}\Big(\sum_{i+j=n}a(i)\otimes b(j)\Big)z^{-n-\text{wt}\,
a-\text{wt}\, a}\,.
$$
Since the condition (\ref{E:2.7}) is satisfied, the coefficient
$$
q(n)=\sum_{i+j=n}a(i)\otimes b(j)
$$
is a well defined operator on $M\otimes M$. On the other hand, we
want to make sense of this formula for $a(i), b(j)\in U$, where
the condition (\ref{E:2.7}) is replaced by the convergence in the
completion $\overline{\mathcal U}$. For this reason set
$$
\big(U\bar\otimes U\big)(n)= \prod_{i+j=n} \big(U(i)\otimes
U(j)\big), \qquad U\bar\otimes U= \coprod_{n\in\mathbb Z}
\big(U\bar\otimes U\big)(n).
$$
The elements of $U\bar\otimes U$ are finite sums of homogeneous
sequences in $U\otimes U$, we shall denote them as
$\sum_{i+j=n}a(i)\otimes b(j)$. For a fixed $n\in\mathbb Z$ we
have a linear map
$$
\chi(n) \colon V\otimes V\rightarrow \big(U\bar\otimes U\big)(n)
$$
defined for homogeneous elements $a$ and $b$ by
$$
\chi(n) \colon a\otimes b\mapsto \sum_{p+r=n}a(p)\otimes b(r).
$$
We think of $\chi(n)(q)$ as ``the coefficient $q(n)$ of the vertex
operator $Y(q,z)$''. We shall write $q(n)=\chi(n)(q)$ for an
element $q\in V\otimes V$ and $Q(n)=\chi(n)(Q)$ for a subspace
$Q\subset V\otimes V$.

Since we have the adjoint action of $\hat{\mathfrak g}$ on $U$, we
define ``the adjoint action'' of $\hat{\mathfrak g}$ on
$U\bar\otimes U$ by
$$
[x(m),\sum_{p+r=n}a(p)\otimes b(r)]=
\sum_{p+r=n}[x(m),a(p)]\otimes b(r)+\sum_{p+r=n}a(p)\otimes
[x(m),b(r)].
$$
Note that we have the action of $\hat{\mathfrak g}$ on $V\otimes
V$ given by
$$
x_i(a\otimes b)= (x_i a)\otimes b+a\otimes (x_i b)\,,\qquad
x\in\mathfrak g, \ i\in\mathbb Z.
$$
As expected, we have the following commutator formula for
$q(n)=\chi(n)(q)$:
\begin{proposition}
\label{P:9.1} For\ $x(m)\in\hat{\mathfrak g}$ and homogeneous
$q\in V\otimes V$ we have
$$
[x(m),q(n)]=\sum_{i\geq 0}\binom{m}{i}(x_iq)(m+n),\qquad
(Dq)(n)=-(n+\text{wt}\, q)q(n).
$$
\end{proposition}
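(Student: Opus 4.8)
The plan is to reduce both identities to the case of a decomposable tensor $q=a\otimes b$ with $a,b\in V$ homogeneous, since $\chi(n)$ is linear, each side of each formula is linear in $q$, and a homogeneous element of $V\otimes V$ of weight $w$ is a finite sum of such $a\otimes b$ with $\text{wt}\,a+\text{wt}\,b=w$. For such a $q$ one has, by definition, $q(n)=\chi(n)(q)=\sum_{p+r=n}a(p)\otimes b(r)$ and
\[
[x(m),q(n)]=\sum_{p+r=n}\bigl([x(m),a(p)]\otimes b(r)+a(p)\otimes[x(m),b(r)]\bigr),
\]
so the whole computation reduces to a single-tensor-factor statement applied twice.

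First I would translate the vertex-algebra commutator formula (\ref{E:2.6}) into the grading convention $Y(u,z)=\sum u(n)z^{-n-\text{wt}\,u}$. Since every $x\in{\mathfrak g}$ has $\text{wt}\,x=1$ we have $x_m=x(m)$ and $\text{wt}(x_ia)=\text{wt}\,a-i$; substituting $a_{n}=a(n-\text{wt}\,a+1)$ and $(x_ia)_{m+n-i}=(x_ia)(m+n-i-\text{wt}(x_ia)+1)$ into (\ref{E:2.6}) and simplifying the index arithmetic collapses the degree shift to
\[
[x(m),a(p)]=\sum_{i\ge 0}\binom{m}{i}(x_ia)(m+p),
\]
and likewise with $b$ in place of $a$. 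By Proposition~\ref{P:4.1} these are genuine identities in $\overline{\mathcal U}$ (the coefficients $u(n)$ live there and carry the vertex-algebra structure of $V$), and by (\ref{E:2.1}) the sum over $i$ is finite. I would then substitute these into the displayed expansion of $[x(m),q(n)]$, pull the finite sum over $i$ outside, reindex the first family of terms by $(p,r)\mapsto(m+p,r)$ and the second by $(p,r)\mapsto(p,m+r)$ — so that in both cases the new indices sum to $m+n$ — and finally use the definition $x_i(a\otimes b)=(x_ia)\otimes b+a\otimes(x_ib)$ together with the linearity of $\chi(m+n)$ to recognize the two reindexed sums as $\chi(m+n)\bigl((x_ia)\otimes b\bigr)+\chi(m+n)\bigl(a\otimes(x_ib)\bigr)=(x_iq)(m+n)$. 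This yields the first formula.

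For the derivation formula I would start from (\ref{E:2.2}) in the form $Y(Du,z)=\tfrac{d}{dz}Y(u,z)$, again valid coefficientwise in $\overline{\mathcal U}$ by Proposition~\ref{P:4.1}. Since $\text{wt}(Du)=\text{wt}\,u+1$, comparing the coefficient of $z^{-p-\text{wt}\,u-1}$ on the two sides gives the single-factor identity $(Du)(p)=-(p+\text{wt}\,u)\,u(p)$. As $D$ acts on $V\otimes V$ as $D\otimes 1+1\otimes D$, we get $(Dq)(n)=\sum_{p+r=n}\bigl((Da)(p)\otimes b(r)+a(p)\otimes(Db)(r)\bigr)$; applying the single-factor identity to each summand and pulling out the scalar $-(p+\text{wt}\,a)-(r+\text{wt}\,b)=-(n+\text{wt}\,q)$ gives $(Dq)(n)=-(n+\text{wt}\,q)\,q(n)$.

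The argument is in the end pure bookkeeping, and I expect the only delicate points to be the passage between the two coefficient conventions $u_n$ and $u(n)$ — it is precisely the weight-one normalization of $x\in{\mathfrak g}$ that makes the binomial coefficient come out as $\binom{m}{i}$ with the \emph{uniform} degree shift $m+p$ rather than something $\text{wt}$-dependent — together with the routine but not quite automatic point that (\ref{E:2.6}) and (\ref{E:2.2}) may be invoked for coefficients regarded as elements of the completion $\overline{\mathcal U}$ and not merely as operators on a module, which is exactly what Proposition~\ref{P:4.1} provides; finiteness of the inner sum over $i$ then makes all the rearrangements of $\sum_{p+r=n}\sum_{i\ge 0}$ legitimate.
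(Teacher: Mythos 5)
Your proof is correct, and it is exactly the routine verification the paper leaves implicit (Proposition~\ref{P:9.1} is stated there with no proof beyond ``as expected''): reduce to decomposable tensors, convert the commutator formula (\ref{E:2.6}) and the derivation axiom (\ref{E:2.2}) to the $u(n)$-convention, and reindex so the two families of terms assemble into $\chi(m+n)(x_iq)$. The index arithmetic (using $\mathrm{wt}\,x=1$ and $\mathrm{wt}(x_ia)=\mathrm{wt}\,a-i$ to get the uniform shift $m+p$) and the appeal to Proposition~\ref{P:4.1} to place the identities in $\overline{\mathcal U}$ are both handled correctly.
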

So if a subspace $Q\subset V\otimes V$ is invariant for
$\tilde{\mathfrak g}_{\geq 0}$, then
$$
\coprod_{n\in\mathbb Z} Q(n)
$$
is a loop $\hat{\mathfrak g}$-module, in general reducible.

Now assume that $q=\sum a\otimes b$ is a homogeneous element in
$\bar R{\mathbf 1}\otimes V$. Note that for $a\in\bar R{\mathbf
1}$ the coefficient $a(i)$ of the corresponding field $Y(a,z)$ can
be written as a finite linear combination of basis elements
$r(\rho)$, $\rho\in\ell \!\text{{\it t\,}}(\bar R)$. Hence each
element of the sequence $q(n)=\chi(n)(\sum a\otimes b)\in
\big(U\bar\otimes U\big)(n)$, say $c_i$, can be written uniquely
as a finite sum of the form
$$
c_i=\sum_{\substack{ \rho\in\ell \!\text{{\it t\,}}(\bar R)}}
r(\rho)\otimes b_\rho,
$$
where $b_\rho\in U$. If $b_\rho\neq 0$, then it is clear that
$|\rho|+|\ell \!\text{{\it t\,}}(b_\rho)|=n$. Let us assume that
$q(n)\neq 0$, and for nonzero ``$i$-th'' component $c_i$ let
$\pi_i$ be the smallest possible $\rho\,\ell \!\text{{\it
t\,}}(b_\rho)$ that appears in the expression for $c_i$. Denote by
$S$ the set of all such $\pi_i$. Since $q$ is a finite sum of
elements of the form $a\otimes b$, it is clear that there is
$\ell$ such that $\ell(\pi_i)\leq\ell$. Then, by our assumptions
on the order $\preceq$, the set $S$ has the minimal element, and
we call it the leading term $\ell \!\text{{\it
t\,}}\big(q(n)\big)$ of $q(n)$. For a subspace $Q\subset \bar
R{\mathbf 1}\otimes V$ set
$$
\ell \!\text{{\it t\,}}(Q(n))=\{\ell \!\text{{\it t\,}}(q(n))\mid
q\in Q,\, q(n)\neq 0\}.
$$

For a colored partition $\pi$ of set
$$
N(\pi)=\max \{\#\mathcal E(\pi)\!-\!1,\,0\},\quad \mathcal
E(\pi)=\{\rho\in \ell \!\text{{\it t\,}} (\bar R) \mid
\rho\subset\pi\}.
$$

Note that $V\otimes V$ has a natural filtration $(V\otimes
V)_\ell$, $\ell\in\mathbb Z_{\geq 0}$, inherited {}from the
filtration $\mathcal U_{\,\ell}$, $\ell\in\mathbb Z_{\geq 0}$.
Then we have the following ``rank theorem'':

\begin{theorem}
\label{T:9.2} Let $Q\subset \ker \left(\Phi\, | \,(\bar R \mathbf
1\otimes V)_\ell\right)$
 be a finite-dimensional subspace and $n\in\mathbb Z$.
Assume that $\ell(\pi)=\ell$ for all $\pi\in\ell \!\text{{\it
t\,}} (Q(n))$. If
\begin{equation}\label{E:9.1}
\sum_{\pi\in\mathcal P^\ell(n)} N(\pi)=\dim Q(n),
\end{equation}
then for any two embeddings
 $\rho_1 \subset \pi$ and $\rho_2 \subset \pi$ in $\pi\in\mathcal P^\ell(n)$,
where $\rho_1, \rho_2 \in\ell \!\text{{\it t\,}}(\bar{R})$, we
have a relation
\begin{equation}\label{E:9.2}
u(\rho_1 \subset \pi) \in u(\rho_2 \subset \pi) + \overline
{\mathbb C\text{-span}\{u(\rho \subset \pi')\mid
 \rho \in \ell \!\text{{\it t\,}} (\bar{R}), \rho \subset \pi', \pi \prec \pi'\}}.
\end{equation}
\end{theorem}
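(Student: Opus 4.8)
The plan is in two steps. \textbf{Step one:} show that each $q\in Q\subseteq\ker\Phi$ produces, through the coefficient $q(n)=\chi(n)(q)$, an honest relation in $\overline{\mathcal U}$ of the form
\[
\textstyle\sum_{\rho\in T_q}c_\rho\,u(\rho\subset\pi)\ \in\ \overline{\mathbb C\text{-span}\{u(\rho'\subset\pi')\mid\rho'\in\ell\!\operatorname{t}(\bar R),\ \rho'\subset\pi',\ \pi\prec\pi'\}},
\]
with $\pi=\ell\!\operatorname{t}(q(n))$, $T_q=\{\rho\in\mathcal E(\pi)\mid\ell\!\operatorname{t}(b_\rho)=\pi/\rho\}$ a nonempty subset of $\mathcal E(\pi)$, $c_\rho\ne 0$ the leading coefficient of $b_\rho$, and $\sum_{\rho\in T_q}c_\rho=0$. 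This is where the ``representation theory'' input sits: $\Phi(q)=0$ forces the annihilating field $Y(\Phi(q),z)=\sum:Y(a,z)Y(b,z):$ to vanish, hence all its $\overline{\mathcal U}$-coefficients vanish; expanding such a coefficient through the decomposition $c_i=\sum_\rho r(\rho)\otimes b_\rho$ of Section~9 and rewriting each $r(\rho)b_\rho$ in the $u(\rho'\subset\pi')$-form — directly when a factor $r(\rho)$ already stands on the right, and otherwise by commuting it rightward using the stability of $\bar R$ under the adjoint action of $\hat{\mathfrak g}$, which costs only strictly shorter, hence (by $\ell(\pi)>\ell(\kappa)\Rightarrow\pi\prec\kappa$) strictly later, terms — puts the relation into the displayed shape; the vanishing $\sum_{\rho\in T_q}c_\rho=0$ then drops out by reading off the coefficient of $u(\pi)$ on both sides, since Proposition~\ref{P:8.4} and the normalisation $r(\rho)=u(\rho)+\cdots$ give each $u(\rho\subset\pi)$ leading term $\pi$ with coefficient $1$, while the right-hand side contributes nothing at $u(\pi)$.

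\textbf{Step two} is a counting argument. For $\pi\in\mathcal P^\ell(n)$ put $Z_\pi=\{(\lambda_\rho)_{\rho\in\mathcal E(\pi)}\mid\sum_\rho\lambda_\rho=0\}$, of dimension $N(\pi)$, and $F_\pi=\{q(n)\in Q(n)\mid q(n)=0\ \text{or}\ \ell\!\operatorname{t}(q(n))\succeq\pi\}$ (and $F_{\succ\pi}$ with $\succeq$ replaced by $\succ$). The hypothesis that all of $\ell\!\operatorname{t}(Q(n))$ has length $\ell$ forces, for $q(n)\in F_\pi$ and $\rho\in\mathcal E(\pi)$, that $b_\rho\in U_{[\pi/\rho]}^{\mathcal P}$, so by Lemma~\ref{L:8.2} the coefficient $\lambda(q)_\rho$ of $u(\pi/\rho)$ in $b_\rho$ is a linear functional of $q(n)$; by Step one the resulting vector $\lambda(q)$ lies in $Z_\pi$, and $\lambda(q)=0$ exactly when $q(n)\in F_{\succ\pi}$. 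Thus $\lambda$ induces an injection $F_\pi/F_{\succ\pi}\hookrightarrow Z_\pi$. Since $\mathcal P^\ell(n)$ is well ordered by $\preceq$ and $Q(n)$ is finite-dimensional, $\{F_\pi\}$ is a filtration of $Q(n)$ with only finitely many jumps, so
\[
\textstyle\dim Q(n)=\sum_{\pi\in\mathcal P^\ell(n)}\dim\bigl(F_\pi/F_{\succ\pi}\bigr)\ \le\ \sum_{\pi\in\mathcal P^\ell(n)}\dim Z_\pi=\sum_{\pi\in\mathcal P^\ell(n)}N(\pi).
\]
Hypothesis~\eqref{E:9.1} makes this an equality, whence $F_\pi/F_{\succ\pi}\cong Z_\pi$ for every $\pi$: every sum-zero vector in $\mathbb C^{\mathcal E(\pi)}$ is $\lambda(q)$ for some $q\in Q$ with $\ell\!\operatorname{t}(q(n))=\pi$.

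To conclude, given $\pi\in\mathcal P^\ell(n)$ and $\rho_1,\rho_2\in\ell\!\operatorname{t}(\bar R)$ with $\rho_1,\rho_2\subset\pi$ — the case $\rho_1=\rho_2$ being trivial — the vector $e_{\rho_1}-e_{\rho_2}$ lies in $Z_\pi$, so by Step two there is $q\in Q$ with $\ell\!\operatorname{t}(q(n))=\pi$ and $\lambda(q)=e_{\rho_1}-e_{\rho_2}$, i.e.\ $T_q=\{\rho_1,\rho_2\}$ with $c_{\rho_1}=1$, $c_{\rho_2}=-1$; feeding this $q$ into the relation of Step one is precisely~\eqref{E:9.2}. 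I expect Step one to be the main obstacle: making the passage from $\Phi(q)=0$ to the displayed relation rigorous in $\overline{\mathcal U}$ requires keeping careful track of the normally ordered product, of the commutations that move every factor $r(\rho)$ to one side, and — crucially — of the matching between the combinatorially defined $\ell\!\operatorname{t}(q(n))$ and the genuine leading partition of the relation, with the order axioms (especially that shortening a partition below length $\ell$ moves it later in $\preceq$) doing the bookkeeping. Granting Step one, Step two is routine linear algebra over the well-ordered set $\mathcal P^\ell(n)$, and~\eqref{E:9.1} enters exactly to force the associated graded of $Q(n)$ to fill each $Z_\pi$.
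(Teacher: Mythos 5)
The paper does not prove Theorem~\ref{T:9.2} itself---it is recalled from \cite{P1}, \cite{P2} (ultimately \cite{MP1})---and your two-step argument is essentially the proof given there: first, each $q\in\ker\Phi$ yields, via the coefficient $q(n)$ and the rewriting $b_\rho r(\rho)=\lambda_\rho u(\rho\subset\rho\,\ell\!\text{{\it t\,}}(b_\rho))+\cdots$ (with commutator corrections of smaller length, hence $\succ\pi$, absorbed using the ad-invariance of $\bar R$), a relation whose leading coefficients sum to zero; second, the leading-term filtration of $Q(n)$ injects each graded piece into the sum-zero space $Z_\pi$ of dimension $N(\pi)$, and hypothesis~\eqref{E:9.1} forces these injections to be isomorphisms, producing the difference vector $e_{\rho_1}-e_{\rho_2}$ and hence~\eqref{E:9.2}. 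Your proposal is correct and follows the same route as the cited sources, with the genuinely delicate point (the passage from $\Phi(q)=0$ to a leading-term relation in $\overline{\mathcal U}$) correctly identified and correctly sketched.
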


Combinatorial relations (\ref{E:9.2}) for the defining relations
$r(\rho)$ of standard modules are needed for construction of
combinatorial bases of standard modules. The left hand side of
(\ref{E:9.1}) is, for a given degree $n$, the total number $N(n)$
of relations needed, and the right hand side of (\ref{E:9.1}) is
the number of relations that we can construct by using the
representation theory. As expected, $N(n)\geq \dim Q(n)$.

It should be noted that relations of the form (\ref{E:9.2}) are
easy to obtain when $\rho_1\rho_2 \subset \pi$. The problem is
when two embeddings ``intersect''. Such relations for $r(\rho)$ of
the combinatorial form (\ref{E:9.2}) are obtained as linear
combinations of relations constructed from ``coefficients $q(n)$
of vertex operators $Y(q,z)$''. In another words, a relation of
the form (\ref{E:9.2}) is a solution of certain system of linear
equations, its existence is guaranteed by the condition
(\ref{E:9.1}).

\section{The problem of constructing a combinatorial basis of $L(k\Lambda_0)$}

We shall illustrate the (desired) construction of combinatorial
bases of standard modules on the simpler case of $L(k\Lambda_0)$.

We assume we have an ordered basis $B$ and we define the order
$\preceq$ on $\mathcal P$ by comparing partitions gradually
\begin{enumerate}
\item  by length,
\item  by degree,
\item  by shape with reverse lexicographical order,
\item  by colors with reverse lexicographical order.
\end{enumerate}

Set $r_{(k+1)\theta}=x_\theta(-1)^{k+1}\mathbf 1$. Then, as in
\cite{MP1}, we have
$$
\ell \!\text{{\it
t\,}}\left(r_{(k+1)\theta}(n)\right)=x_\theta(-j-1)^ax_\theta(-j)^b
$$
with $a+b=k+1$ and $(-j-1)a+(-j)b=n$. Since we can obtain all
other elements $r(n)$ for $r\in R$ by the adjoint action of
$\mathfrak g$, which does not change the length and degree, we
have that shapes of leading terms of $r(n)$ remain the same:
\begin{equation}\label{shapes}
\text{sh\,}\ell \!\text{{\it t\,}}\big(r(n)\big)=(-j-1)^a(-j)^b
\end{equation}
with $a+b=k+1$ and $(-j-1)a+(-j)b=n$. Let us introduce the
notation
$$
\mathcal D=\ell \!\text{{\it t\,}}(\bar{R})\cap\mathcal
P_{<0}\,,\qquad \mathcal{RR}=P_{<0}\backslash \big(\mathcal D\cdot
P_{<0}\big).
$$

We shall denote by $\mathbf 1$ the highest weight vector in the
standard module $L(k\Lambda_0)=N(k\Lambda_0)/N^1(k\Lambda_0)$.

\begin{proposition}
\label{P:10.1} If for each $\ell\in\{k+2,\dots,2k+1\}$ there
exists a finite-dimen\-sion\-al subspace
$Q_\ell\subset\ker\left(\Phi\, | \,(\bar R \mathbf 1\otimes
V)_\ell\right)$ such that $\ell(\pi)=\ell$ for all $\pi\in\ell
\!\text{{\it t\,}} (Q_\ell(n))$ and
$$
\sum_{\pi\in\mathcal P^\ell(n)} N(\pi)=\dim Q_\ell(n),
$$
for all $n\leq -k-2$, then the set of vectors
\begin{equation}\label{E:10.3}
u(\pi)\mathbf 1\,, \qquad \pi\in\mathcal{RR},
\end{equation}
is a basis of the standard module $L(k\Lambda_0)$.
\end{proposition}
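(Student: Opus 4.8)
\emph{Strategy.} This is a ``spanning set is a basis'' statement, and I would prove it by establishing, in every degree $n$, the two inequalities $\dim L(k\Lambda_0)_{(n)}\le rr(n)$ and $\dim\mathcal N_{(n)}\le p(n)-rr(n)$, where $\mathcal N=N^1(k\Lambda_0)$, $p(n)=\#\{\pi\in\mathcal P_{<0}\mid|\pi|=n\}=\dim N(k\Lambda_0)_{(n)}$ and $rr(n)=\#\{\pi\in\mathcal{RR}\mid|\pi|=n\}$. Everything is done inside $N(k\Lambda_0)=U(\tilde{\mathfrak g}_{<0})\mathbf 1$, which carries the genuine PBW basis $\{u(\pi)\mathbf 1\mid\pi\in\mathcal P_{<0}\}$, so every homogeneous element of $N(k\Lambda_0)$ has a well-defined leading term. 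Since $\mathcal N=U(\tilde{\mathfrak g}_{<0})\bar R\mathbf 1$ and, by the shape formula (\ref{shapes}), the basis vectors $r(\rho)$ of $\bar R$ with $r(\rho)\mathbf 1\ne 0$ have $\rho\in\mathcal D$ (so $\bar R\mathbf 1=\sum_{\rho\in\mathcal D}\mathbb C\,r(\rho)\mathbf 1$), the submodule $\mathcal N$ is spanned by the elements $u(\rho\subset\pi)\mathbf 1=u(\pi/\rho)r(\rho)\mathbf 1$ with $\rho\in\mathcal D$ and $\rho\subset\pi\in\mathcal P_{<0}$; by Proposition~\ref{P:8.4} and $\tilde{\mathfrak g}_{\geq 0}\mathbf 1=0$, each such element equals $u(\pi)\mathbf 1+\sum_{\sigma\succ\pi,\ \sigma\in\mathcal P_{<0}}c_\sigma\,u(\sigma)\mathbf 1$, so its leading term is exactly $\pi$.

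\emph{Spanning.} For $\pi\in\mathcal P_{<0}\setminus\mathcal{RR}$ pick $\rho\in\mathcal D$ with $\rho\subset\pi$; then $u(\rho\subset\pi)\mathbf 1=0$ in $L(k\Lambda_0)$, and the displayed expansion rewrites $u(\pi)\mathbf 1$ in $L(k\Lambda_0)$ through the $u(\sigma)\mathbf 1$ with $\sigma\succ\pi$ of the same degree. As $\preceq$ well-orders $\mathcal P_{<0}$ (indeed the colored partitions of a fixed degree with all parts negative form a finite set), iterating terminates and expresses every $u(\pi)\mathbf 1$, $\pi\in\mathcal P_{<0}$, through the $u(\pi')\mathbf 1$ with $\pi'\in\mathcal{RR}$; hence $\dim L(k\Lambda_0)_{(n)}\le rr(n)$.

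\emph{Bounding $\mathcal N$.} For each $\pi\in\mathcal P_{<0}\setminus\mathcal{RR}$ fix one embedding $\rho(\pi)\in\mathcal D$, $\rho(\pi)\subset\pi$. The crux is to show that every generator $u(\rho\subset\pi)\mathbf 1$ of $\mathcal N$ lies in $\mathrm{span}\{u(\rho(\pi')\subset\pi')\mathbf 1\mid\pi'\in\mathcal P_{<0}\setminus\mathcal{RR},\ \pi\preceq\pi'\}$; granting this, $\{u(\rho(\pi)\subset\pi)\mathbf 1\mid\pi\in\mathcal P_{<0}\setminus\mathcal{RR}\}$ spans $\mathcal N$, whence $\dim\mathcal N_{(n)}\le p(n)-rr(n)$. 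To prove it I use a relation of the form (\ref{E:9.2}) for the two embeddings $\rho,\rho(\pi)\subset\pi$. If $\rho$ and $\rho(\pi)$ are disjoint this is the easy case noted after Theorem~\ref{T:9.2}. If they overlap, (\ref{shapes}) shows each of $\rho,\rho(\pi)$ has length $k+1$ with overlap of length between $1$ and $k$, so their join $\pi_1=\rho\vee\rho(\pi)$ (the smallest colored partition containing both) has length $\ell\in\{k+2,\dots,2k+1\}$ and degree $|\pi_1|\le-(k+2)\le-k-2$; by the hypothesis, $Q_\ell$ then meets the assumptions of Theorem~\ref{T:9.2} in degree $|\pi_1|$, yielding (\ref{E:9.2}) for $\pi_1$, and left multiplication by $u(\pi/\pi_1)$, controlled by Proposition~\ref{P:8.4}, promotes it to (\ref{E:9.2}) for $\pi$. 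Evaluating the resulting relation on $\mathbf 1$ and eliminating the finitely many terms $u(\rho'\subset\pi')\mathbf 1$ with $\pi'\notin\mathcal P_{<0}$ (commute the non-negative modes onto $\mathbf 1$ and use the $\tilde{\mathfrak g}_{\geq 0}$-stability of $\bar R\mathbf 1$: as in \cite{MP1} such a term either vanishes or becomes a combination of $u(\rho''\subset\pi'')\mathbf 1$ with $\pi''\in\mathcal P_{<0}\setminus\mathcal{RR}$ of strictly larger shape, hence $\pi''\succ\pi$) gives $u(\rho\subset\pi)\mathbf 1\in u(\rho(\pi)\subset\pi)\mathbf 1+\mathrm{span}\{u(\rho''\subset\pi'')\mathbf 1\mid\rho''\in\mathcal D,\ \pi''\in\mathcal P_{<0}\setminus\mathcal{RR},\ \pi\prec\pi''\}$; since there are only finitely many colored partitions of degree $|\pi|$, a straightforward induction now yields the crux.

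\emph{Conclusion and main obstacle.} Combining the two bounds, $p(n)=\dim N(k\Lambda_0)_{(n)}=\dim L(k\Lambda_0)_{(n)}+\dim\mathcal N_{(n)}\le rr(n)+(p(n)-rr(n))=p(n)$, so both inequalities are equalities; in particular $\dim L(k\Lambda_0)_{(n)}=rr(n)$, and the spanning set (\ref{E:10.3}) has in each degree exactly $\dim L(k\Lambda_0)_{(n)}$ elements, hence is a basis. The substantive input — the spaces $Q_\ell$ with $\sum_\pi N(\pi)=\dim Q_\ell(n)$ — is assumed (it is the ``representation theory part''), so within this proof the main obstacle is the combinatorics of the bounding step: one must be certain that the leading terms of all relations $r(\rho)$ have length exactly $k+1$ (so that overlaps of two of them fall precisely in the window $k+2,\dots,2k+1$ covered by the hypothesis), that $N(\pi)$ counts exactly the relations (\ref{E:9.2}) needed to trade an arbitrary embedding in $\pi$ for $\rho(\pi)$, and — the real point — that this supply of relations is sufficient to make the reduction confluent, i.e.\ that no combination of generators of $\mathcal N$ can have its leading term in $\mathcal{RR}$; confluence is exactly what the exact numerical match in the hypothesis secures, and verifying it relies on the listed properties of the order $\preceq$ together with the multiplicativity of leading terms (Proposition~\ref{P:8.4}).
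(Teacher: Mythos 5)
Your proof is correct and follows essentially the same route as the paper: reduce the PBW spanning set of $N^1(k\Lambda_0)$ to one indexed by $\mathcal D\cdot\mathcal P_{<0}$ using the relations (\ref{E:9.2}) supplied by Theorem~\ref{T:9.2} under the stated hypothesis, reduce the PBW spanning set of $L(k\Lambda_0)$ to (\ref{E:10.3}), and conclude by counting. The only (immaterial) difference is at the end: the paper proves linear independence of the spanning set of $N^1(k\Lambda_0)$ directly via leading terms and then reads off $\operatorname{ch}L(k\Lambda_0)=\sum_{\pi\in\mathcal{RR}}e^{\operatorname{wt}\pi}$, whereas you obtain the same conclusion from the additivity $\dim N(k\Lambda_0)_{(n)}=\dim L(k\Lambda_0)_{(n)}+\dim N^1(k\Lambda_0)_{(n)}$ squeezing both spanning sets into bases.
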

\begin{proof}
Since elements in $R$ are of degree $k+1$, and there is no element
in  $N^1(k\Lambda_0)$ of smaller degree, for $\rho \in \ell
\!\text{{\it t\,}} (\bar{R})$ we have that $r(\rho)\mathbf 1=0$
whenever $|\rho|>-k-1$. Hence (\ref{shapes}) implies that $\rho\in
\mathcal D$ whenever $r(\rho)\mathbf 1\neq 0$. Since
$N^1(k\Lambda_0)=\bar R N(k\Lambda_0)=U(\tilde{\mathfrak
g}_{<0})\bar R\mathbf 1$, we have a spanning set of
$N^1(k\Lambda_0)$
$$
u(\kappa)r(\rho)\mathbf 1=u(\rho\subset\kappa\rho)\mathbf 1\,,
\qquad \kappa\in\mathcal P_{<0}\,, \ \rho\in\mathcal D.
$$
For each $\pi\in\mathcal D\cdot P_{<0}$ choose exactly one
$\rho_\pi\in\mathcal D$ such that $\rho_\pi\subset\pi$. Since by
our assumptions we can apply Theorem~\ref{T:9.2}, for each
$\pi\in\mathcal D\cdot P_{<0}$ such that
$\pi=\kappa_1\rho_1=\kappa_2\rho_2$ we have a relation
(\ref{E:9.2}). Hence, by using induction, we se that
\begin{equation}\label{E:10.4}
u(\rho_\pi\subset\pi)\mathbf 1\,, \qquad \pi\in\mathcal D\cdot
P_{<0}\,,
\end{equation}
is a spanning set of $N^1(k\Lambda_0)$. Since by
Proposition~\ref{P:8.4}
$$
\ell \!\text{{\it t\,}}\big(u(\pi/\rho_\pi)r(\rho_\pi)\big)
=\big(\pi/\rho_\pi)\cdot\rho_\pi=\pi\,,
$$
we have that
$$
u(\rho_\pi\subset\pi)\mathbf 1\in u(\pi)\mathbf 1 +
U_{(\pi)}^\mathcal P \,\mathbf 1,
$$
and by induction we see that the set (\ref{E:10.4}) is linearly
independent. Hence this set is a basis of $N^1(k\Lambda_0)$.

In the obvious way we can assign to each colored partition $\pi$
its weight $\text{wt\,}\pi$, and we have characters
$$
\text{ch\,}N(k\Lambda_0)=\sum_{\pi\in\mathcal
P_{<0}}e^{\text{wt\,}\pi},\qquad
\text{ch\,}N^1(k\Lambda_0)=\sum_{\pi\in\mathcal D\cdot
P_{<0}}e^{\text{wt\,}\pi}.
$$
Hence we have
\begin{equation}\label{E:10.5}
\text{ch\,}L(k\Lambda_0)=\sum_{\pi\in\mathcal{RR}}e^{\text{wt\,}\pi}.
\end{equation}

To find a basis of $L(k\Lambda_0)$ we start with the PBW spanning
set
$$
u(\pi)\mathbf 1\,, \qquad \pi\in\mathcal P_{<0}\,.
$$
For $\pi\in\mathcal D\cdot P_{<0}$ we have
$$
u(\pi)\in u(\pi/\rho_\pi)r(\rho_\pi)+U_{(\pi)}^\mathcal P .
$$
Since $r(\rho_\pi)\mathbf 1=0$ in $L(k\Lambda_0)$, we have
$$
u(\pi)\mathbf 1\in U_{(\pi)}^\mathcal P\,\mathbf 1 \quad\text{for}
\ \pi\in\mathcal D\cdot P_{<0},
$$
and by using induction we can reduce the PBW spanning set to a
spanning set (\ref{E:10.3}). By the character formula
(\ref{E:10.5}) this set is linearly independent.
\end{proof}
\begin{remarks}
(i) At the moment just a few examples are known where the
conditions of Theorem~\ref{T:9.2} are satisfied, the simplest is
for the basic ${\mathfrak sl}(2,\mathbb C)\,\widetilde{}\,$-module
(see \cite{MP1}). With the usual notation $x=x_{\theta}$,
$h=\theta^\vee$ and $y=x_{-\theta}$, the set of leading terms
$\ell \!\text{{\it t\,}}(\bar{R})$ is:
$$
b_1(-j)b_2(-j)\quad\text{with colors \ } b_1b_2:\quad yy, yh, hh,
hx, xx,
$$
$$
b_1(-j-1)b_2(-j)\quad\text{with colors \ } b_1b_2:\quad yy, hy,
xy, xh, xx.
$$
If one takes
$$
Q_3=U(\mathfrak g)q_{2\theta}\oplus U(\mathfrak g)q_{3\theta},
$$
then, by using Proposition~\ref{P:9.1} and loop modules, $\dim
Q_3(n)=5+7$ and (\ref{E:9.1}) holds for all $n$. If one takes
$(1,2)$-specialization of the Weyl-Kac character formula on one
side, and (\ref{E:10.5}) on the other side, one obtains a
Capparelli identity \cite{C}.

(ii) The results in Section 9 can be extended to twisted
affine Lie algebras (see \cite{P1}). In such formulation of
Theorem~\ref{T:9.2} the equality (\ref{E:9.1}) also holds for
level $1$ twisted ${\mathfrak sl}(3,\mathbb
C)\,\widetilde{}\,$-modules (see \cite{S}).

(iii) The character formula (\ref{E:10.5}) is a generating
function for numbers of colored partitions in $\mathcal{RR}$
satisfying ``difference $\mathcal{D}$ conditions'', and combined
with the Weyl-Kac character formula gives a Rogers-Ramanujan type
identity.
\end{remarks}
\section{Combinatorial bases of basic modules for $C_n\sp{(1)}$}

We fix a simple Lie algebra $\mathfrak{g}$ of type $C_n$, $n\geq 2$. For a given
Cartan subalgebra $\mathfrak h$ and the corresponding
root system $\Delta$ we can write
\begin{equation*}
\Delta = \{\pm(\varepsilon_i\pm\varepsilon_j) \mid i,j=1,...,n\}\setminus\{0\}\ .
\end{equation*}
We chose simple roots as in \cite{Bou}
\begin{equation*}
\alpha_1= \varepsilon_1-\varepsilon_2,  \ \alpha_2=\varepsilon_2-\varepsilon_3, \  \cdots \ \alpha_{n-1}=\varepsilon_{n-1}-\varepsilon_{n},
 \  \alpha_n = 2\varepsilon_n.
\end{equation*}
Then $\theta=2 \varepsilon_1$ and $\alpha^{\star}=\alpha_1$. By Lemma 7.5 for each degree $m$ we have a space of relations for annihilating fields
$$
Q_3(m)=U(\mathfrak g)q_{2\theta}(m)\oplus U(\mathfrak g)q_{3\theta}(m)\oplus U(\mathfrak g)q_{3\theta-\alpha\sp*}(m)\ .
$$
The Weyl dimension formula
for $\mathfrak{g}$ gives
\begin{eqnarray}
\dim L(2\theta) &=& {2n+3\choose 4},\label{E:11.1}\\
\dim L(3\theta) &=& {2n+5\choose 6},\label{E:11.2}\\
\dim L(3\theta-\alpha^{\star}) &=& \frac{(2n+5)(n-1)}{3}{2n+3\choose 4}.\label{E:11.3}
\end{eqnarray}
Hence we have
\begin{equation}\label{tri sume}
\dim Q_3(m) = \dim L(2\theta) + \dim L(3\theta)  + \dim L(3\theta-\alpha^{\star}) =2n
{2n+4\choose 5} .
\end{equation}
For each $\alpha\in\Delta$ we chose a root vector $x_{\alpha}$ such that
$[x_{\alpha},x_{-\alpha}]=\alpha^{\vee}$. For root vectors
$x_{\alpha}$ we shall use the following notation:
$$\begin{array}{ccc}
x_{ij}\quad \text{or just}\quad ij &  \text{if}\   &\alpha =\varepsilon_i + \varepsilon_j\ , \ i\leq j\,,\\
x_{\underline{i}\underline{j}}\quad \text{or just}\quad \underline{i}\underline{j}
 & \ \text{if}\  &\alpha =-\varepsilon_i - \varepsilon_j\ , \ i\geq j\,,\\
x_{i\underline{j}}\quad \text{or just}\quad i \underline{j} & \ \text{if}\
&\alpha =\varepsilon_i - \varepsilon_j\ , \ i\neq j\,.\\
\end{array}
$$
With previous notation $x_\theta=x_{11}$. We also write for $i=1, \dots, n$
$$
x_{i\underline{i}}=\alpha_i^{\vee}\ \text{or just}\ i\underline{i} \,.
$$
These vectors $x_{ab}$ form a basis $B$ of $\mathfrak g$ which we shall write in
a triangular scheme. For example, for $n=3$ the basis $B$ is
$$\begin{array}{cccccc}
11 &  &&  & & \\
12 & 22 & & & & \\
13 & 23 & 33 & & & \\
1\underline{3} & 2\underline{3} & 3\underline{3} & \underline{3}\underline{3}  & & \\
1\underline{2} & 2\underline{2} & 3\underline{2} & \underline{3}\underline{2}  & \underline{2}\underline{2}& \\
1\underline{1} & 2\underline{1} & 3\underline{1} & \underline{3}\underline{1}  & \underline{2}\underline{1} & \underline{1}\underline{1}.
\end{array}
$$
In general for the set of indices $\{1,2,\cdots ,n,\underline{n},\cdots ,\underline{2},\underline{1}\}$ we use order
\begin{equation*}
1\succ 2\succ\cdots\succ n-1\succ n\succ \underline{n} \succ  \underline{n-1} \succ\cdots
\succ \underline{2} \succ \underline{1}
\end{equation*}
and a basis element $x_{ab}$ we write in $a^{th}$ column and $b^{th}$ row,
\begin{equation}\label{E:12.6}
B=\{x_{ab}\mid b\in\{1,2,\cdots ,n,\underline{n},\cdots ,\underline{2},\underline{1}\},\ a\in\{1,\cdots ,b\}\} .
\end{equation}
By using (\ref{E:12.6}) we define on $B$ the corresponding reverse  lexicographical order, i.e.
\begin{equation}\label{orderB}
x_{ab}\succ x_{a'b'} \ if\ b\succ b'\ or \ b=b' \ and\ a\succ a'\ .
\end{equation}
In other words, $x_{ab}$ is larger than $x_{a' b'}$ if $x_{a' b'}$ lies in a row $b'$ below the row $b$, or $x_{ab}$ and
$x_{a' b'}$ are in the same row $b=b'$, but $x_{a'b'}$  is to the right of $x_{ab}$.

For $r\in \{1,\cdots , n,\underline{n}, \cdots , \underline{1}\}$ we introduce the notation
$$
\triangle_r \quad\text{and} \quad {}\sp r\!\triangle
$$
for triangles in $B$ consiting of rows $\{1,\dots,r\}$ and columns $\{r,\dots,\underline{1}\}$.   For example, for $n=3$ and $r=\underline{3}$ we have triangles $\triangle_{\underline{3}}$ and $ {}\sp {\underline{3}}\triangle$
$$\begin{array}{cccccc}
11 &  &&  & & \\
12 & 22 & & & & \\
13 & 23 & 33 & & & \\
1\underline{3} & 2\underline{3} & 3\underline{3} & \underline{3}\underline{3}  & & \\
&& & \underline{3}\underline{2}  & \underline{2}\underline{2}& \\
&&& \underline{3}\underline{1}  & \underline{2}\underline{1} & \underline{1}\underline{1}.
\end{array}
$$
With order $\preceq$ on $B$ we  define a linear order on $\bar B=\{x(j)\mid x\in B, j\in\mathbb Z\}$ by
\begin{equation}\label{orderB(j)}
x_\alpha(i)\prec x_\beta(j)\quad\text{if}\quad i<j \quad\text{or}\quad i=j, \ x_\alpha\prec x_\beta.
\end{equation}
With order $\preceq$ on $\bar B$ we  define a linear order on  $\mathcal{P}$ by
$$
\pi\prec\kappa\quad\text{if}
$$
\begin{itemize}
\item  $\ell(\pi)>\ell(\kappa)$ or
\item  $\ell(\pi)=\ell(\kappa)$, $|\pi|<|\kappa|$ or
\item  $\ell(\pi)=\ell(\kappa)$, $|\pi|=|\kappa|$, $\text{sh\,}\pi\prec\text{sh\,}\kappa$ in the  reverse
lexicographical order or
\item  $\ell(\pi)=\ell(\kappa)$, $|\pi|=|\kappa|$, $\text{sh\,}\pi=\text{sh\,}\kappa$ and colors of
$\pi$ are smaller than the colors of $\kappa$ in reverse lexicographical order.
\end{itemize}
\begin{lemma}\label{L: leading terms}
The set of leading terms of relations $\bar R$ for level $1$ standard $\tilde{\mathfrak g}$-modules
consists of quadratic monomials
$$
x_{a_1b_1}(-j) x_{a_2b_2}(-j), \quad j\in\mathbb Z, \quad b_{2}\preceq b_1 \ \text{and} \ a_{2}\succeq a_1,
$$
and quadratic monomials
$$
x_{a_1b_1}(-j-1) x_{a_2b_2}(-j),\quad j\in\mathbb Z, \quad   b_{1} \succeq a_2.
$$
\end{lemma}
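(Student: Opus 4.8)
The strategy is to use the general ``shape'' result to reduce the lemma to a finite linear-algebra problem about the $\mathfrak g$-module $R$, and then to settle that problem by the explicit combinatorics of \cite{PS}. Since $k=1$, we have $r_{2\theta}=x_{11}(-1)^{2}\mathbf 1$, and $R=U(\mathfrak g)r_{2\theta}$ is the $L(2\theta)$-isotypic component of the weight $2$ subspace of $V$; identifying $\mathfrak g\cong S^{2}(\mathbb C^{2n})$ we have $R\cong L(2\theta)=S^{4}(\mathbb C^{2n})\subset S^{2}(S^{2}(\mathbb C^{2n}))=S^{2}(\mathfrak g)$, with $\dim R=\binom{2n+3}{4}$ by (\ref{E:11.1}). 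Writing $r\in R$ in the monomial basis of $S^{2}(\mathfrak g)$ as $r=\sum_{x_{ab}\preceq x_{cd}}c^{r}_{x_{ab}x_{cd}}\,x_{ab}(-1)x_{cd}(-1)\mathbf 1$ gives $Y(r,z)=\sum c^{r}_{x_{ab}x_{cd}}\,{:}x_{ab}(z)x_{cd}(z){:}$. By (\ref{shapes}) with $k=1$, every nonzero $r(m)$, $r\in\bar R$, has a leading term which is a colored partition of length $2$ of shape $(-j)(-j)$ when $m=-2j$ and of shape $(-j-1)(-j)$ when $m=-2j-1$; so the leading terms are quadratic monomials of these two shapes, and only the occurring colors remain. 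As $\bar R=\coprod_{m}\bar R(m)$ is the loop $\tilde{\mathfrak g}$-module attached to $R$, each graded piece is $\cong R$ as a $\mathfrak g$-module, so Proposition~\ref{P:8.5} gives $\#\,\ell \!\text{{\it t\,}}(\bar R(m))=\dim R=\binom{2n+3}{4}$; and the degree shift $x_{ab}(i)\mapsto x_{ab}(i-1)$ shows the admissible colors for a fixed shape are independent of $j$, so it suffices to treat degrees $m=-2$ and $m=-3$.

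Next I would make the leading term explicit. Expanding the normal-ordered products and dropping everything strictly larger in $\preceq$ (shorter monomials, and length-two monomials of larger shape, including those with a non-negative part), one obtains
$$
r(-2j)\equiv\!\!\sum_{x_{ab}\preceq x_{cd}}\!\! c^{r}_{x_{ab}x_{cd}}\,x_{ab}(-j)x_{cd}(-j),\quad r(-2j-1)\equiv\!\!\sum_{x_{ab}\preceq x_{cd}}\!\! c^{r}_{x_{ab}x_{cd}}\bigl(x_{ab}(-j-1)x_{cd}(-j)+x_{cd}(-j-1)x_{ab}(-j)\bigr).
$$
Hence the leading term of $r(m)$ is the $\preceq$-smallest quadratic monomial of the relevant shape whose color pair (unordered in the even case, ordered after symmetrization in the odd case) has $c^{r}\neq 0$. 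Consequently a quadratic monomial $\rho$ of the correct shape lies in $\ell \!\text{{\it t\,}}(\bar R(m))$ iff the system ``$c^{r}_{\rho}\neq 0$ and $c^{r}_{\rho'}=0$ for all $\rho'\prec\rho$'' is solvable with $r\in R$; equivalently, the monomials that are \emph{not} leading terms are exactly the $\preceq$-largest monomials of the functionals in the annihilator $R^{\perp}\subset(S^{2}\mathfrak g)^{\ast}$, of which there are $\dim R^{\perp}=\dim S^{2}(\mathfrak g)-\binom{2n+3}{4}$, all distinct.

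It remains to identify those largest monomials. Since $R^{\perp}=(S^{2}(\mathfrak g)/L(2\theta))^{\ast}$ is spanned by the iterated contractions with the symplectic form on $\mathbb C^{2n}$, I would write these contraction relations in the triangular basis $B$ of (\ref{E:12.6}), read off their $\preceq$-largest monomials, and check that the monomials so excluded are precisely those \emph{not} of the stated form---$x_{a_{1}b_{1}}(-j)x_{a_{2}b_{2}}(-j)$ with $b_{2}\preceq b_{1}$, $a_{2}\succeq a_{1}$, and $x_{a_{1}b_{1}}(-j-1)x_{a_{2}b_{2}}(-j)$ with $b_{1}\succeq a_{2}$---and then verify, by a Vandermonde-type identity, that the number of admissible monomials of each shape equals $\binom{2n+3}{4}$, forcing $\ell \!\text{{\it t\,}}(\bar R(m))$ to be exactly that set. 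The main obstacle is this last step: the bookkeeping of the $\preceq$-leading monomials of the symplectic contraction relations in the basis $B$ and the verification that the complicated list they produce is the complement of the two simple conditions $b_{2}\preceq b_{1},\,a_{2}\succeq a_{1}$ and $b_{1}\succeq a_{2}$. This is precisely the combinatorial parametrization of leading terms of defining relations---the ``major new insight''---obtained in \cite{PS}, which enters the proof in the level-one case either as a direct computation or by citation.
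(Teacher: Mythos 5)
Your reduction of the lemma to a single pair of degrees is sound and is how any proof must begin: by (\ref{shapes}) with $k=1$ the leading terms are quadratic of shape $(-j)(-j)$ or $(-j-1)(-j)$, the admissible colors are independent of $j$, each graded piece $\bar R(m)\cong R\cong L(2\theta)$ contributes exactly $\dim R=\binom{2n+3}{4}$ leading terms by Proposition~\ref{P:8.5} and (\ref{E:11.1}), and each of the two families in the statement has exactly $\binom{2n+3}{4}$ members (both are parametrized by chains $a_2\preceq a_1\preceq b_1\preceq b_2$, resp.\ $a_1\preceq b_1\preceq a_2\preceq b_2$, i.e.\ by $4$-multisets from the index set). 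Be aware, though, that the paper itself gives no proof beyond citing Theorem~6.1 of \cite{PS} and remarking that the level-one case is ``very simple'', so on the decisive step your proposal and the paper are on the same footing.

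Two things in your plan for that decisive step need repair. First, the assertion that $R^{\perp}=(S^{2}(\mathfrak g)/L(2\theta))^{*}$ is spanned by iterated contractions with the symplectic form is false: with $\mathfrak g\cong S^2V$, $V=\mathbb C^{2n}$, the complement of $S^4V$ in $S^2(S^2V)$ is the $GL(V)$-plethysm component $S^{(2,2)}V$, which under $Sp(V)$ contains the summand $L(2\varepsilon_1+2\varepsilon_2)$ annihilated by every symplectic contraction; the annihilator of $R$ is spanned instead by the ``exchange'' functionals dual to $x_{ab}x_{cd}-x_{ac}x_{bd}$ (and for odd degree one must work in the larger space of ordered pairs, where the relevant annihilator is strictly larger than $R^{\perp}$). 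Second, and more importantly, the identification of the excluded monomials is the entire content of the lemma and is left unexecuted. The ``very simple argument'' avoids $R^{\perp}$ altogether: since $R$ projects isomorphically onto the image of the symmetrization $S^4V\to S^2(S^2V)$, the element of $R$ attached to a monomial $e_{i_1}e_{i_2}e_{i_3}e_{i_4}$ with $i_1\le i_2\le i_3\le i_4$ is, modulo larger terms, the sum with positive coefficients of its pair-splittings $x_{i_1i_2}x_{i_3i_4}$, $x_{i_1i_3}x_{i_2i_4}$, $x_{i_1i_4}x_{i_2i_3}$; a direct check with the order (\ref{orderB}) shows the $\preceq$-minimal resulting colored partition is $x_{i_2i_3}(-j)x_{i_1i_4}(-j)$ for shape $(-j)(-j)$ and $x_{i_1i_2}(-j-1)x_{i_3i_4}(-j)$ for shape $(-j-1)(-j)$, which are exactly the monomials in the statement. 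This exhibits every claimed monomial as a leading term, and your count then forces $\ell \!\text{{\it t\,}}(\bar R)$ to equal the claimed set. As written, your proposal does not close this gap except by citing \cite{PS}.
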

This lemma is a special case of Theorem 6.1 in \cite{PS}. The proof for this level one case reduces to a very simple argument.
\begin{remark}
Note that a quadratic monomial $x_{a_1b_1}(-j-1) x_{a_2b_2}(-j)$ is a leading term of relation if and only if
there is $r$ such that
\begin{equation*}
x_{a_1b_1}\in\triangle_r \quad\text{and}\quad x_{a_2b_2}\in {}\sp r\!\triangle.
\end{equation*}
\end{remark}
\begin{theorem}\label{T: a basis of basic module}
The set of monomial vectors which have no leading term as a factor,
i.e., the set of vectors
\begin{equation}\
u(\pi)\mathbf 1\,, \qquad \pi\in\mathcal{RR},
\end{equation}
is a basis of the basic $\tilde{\mathfrak g}$-module $L(\Lambda_0)$.
\end{theorem}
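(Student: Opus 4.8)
The plan is to deduce Theorem~\ref{T: a basis of basic module} from Proposition~\ref{P:10.1} specialized to level $k=1$. (The order $\preceq$ on $\mathcal P$ fixed above is routinely checked to satisfy the six properties required in Section~8.) For $k=1$ the index set $\{k+2,\dots,2k+1\}$ is just $\{3\}$, so the whole statement reduces to producing one finite-dimensional subspace $Q_3\subset\ker\big(\Phi\,|\,(\bar R\mathbf 1\otimes V)_3\big)$ such that every $\pi\in\ell\!\text{{\it t\,}}(Q_3(n))$ has length $3$ and
$$
\sum_{\pi\in\mathcal P^3(n)}N(\pi)=\dim Q_3(n)\qquad\text{for all } n\le -3 .
$$
For $Q_3$ I would take the space introduced above, $U(\mathfrak g)q_{2\theta}\oplus U(\mathfrak g)q_{3\theta}\oplus U(\mathfrak g)q_{3\theta-\alpha^{\star}}$, viewed inside $\ker\Phi$ via the identification $\Xi(\ker\Phi)=\ker\Psi$ of Theorem~\ref{T:6.1}. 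By Lemma~\ref{L:7.5} all three summands are obtained from $q_{3\theta}=q_{(k+2)\theta}$ by the action of $\tilde{\mathfrak g}_{\geq 0}$, which preserves the natural filtration; since $q_{(k+2)\theta}$ has length $k+2=3$, we get $Q_3\subset(\bar R\mathbf 1\otimes V)_3$. That every leading term of $Q_3(n)$ has length exactly $3$ is established by the same leading-term computation as in \cite{MP1}: each of $q_{2\theta},q_{3\theta},q_{3\theta-\alpha^{\star}}$ is a genuinely cubic element whose top component survives under $\chi(n)$ for $n\le -3$, and the $\mathfrak g$-action generating the rest of $Q_3$ changes neither length nor degree.

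The right-hand side $\dim Q_3(n)$ is already at hand. Viewing $\coprod_n Q_3(n)$ as a loop $\hat{\mathfrak g}$-module via Proposition~\ref{P:9.1}, one checks---just as in the example discussed in the Remarks following Proposition~\ref{P:10.1}---that for $n\le -3$ the three $U(\mathfrak g)$-submodules remain independent and each $\chi(n)$ is injective on its summand, so that $\dim Q_3(n)=\dim L(2\theta)+\dim L(3\theta)+\dim L(3\theta-\alpha^{\star})$. By the Weyl dimension formula this equals the quantity computed in \eqref{E:11.1}--\eqref{tri sume}, namely $2n\binom{2n+4}{5}$.

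The heart of the proof, and the step I expect to be the main obstacle, is the combinatorial identity
$$
\sum_{\pi\in\mathcal P^3(n)}N(\pi)=2n\binom{2n+4}{5}\qquad(n\le -3).
$$
Here $N(\pi)$ is one less than the number $\#\mathcal E(\pi)$ of leading-term sub-partitions of the length-$3$ colored partition $\pi$ whenever that number is positive, and the total is exactly the count of two-embeddings of \cite{PS}. The sum is finite: $N(\pi)>0$ forces two of the three two-element sub-partitions of $\pi$ to be leading terms, and since a leading term pairs parts of equal or consecutive degree, this squeezes the three part-degrees $d_1\le d_2\le d_3$ of $\pi$ into $d_3-d_1\le 2$, leaving only the shapes $(-j,-j,-j)$, $(-j-1,-j,-j)$, $(-j-1,-j-1,-j)$ and $(-j-2,-j-1,-j)$ for the finitely many $j$ compatible with total degree $n$. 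For each such shape one enumerates, using the explicit list of leading terms in Lemma~\ref{L: leading terms}---the equal-degree condition $b_2\preceq b_1$, $a_2\succeq a_1$ together with the triangle condition $x_{a_1b_1}\in\triangle_r$, $x_{a_2b_2}\in{}\sp r\!\triangle$ of the Remark following it---the colour triples $(b_1,b_2,b_3)$ carrying at least two leading-term sub-partitions, and sums their contributions. This bookkeeping is purely combinatorial but intricate; it is precisely the two-embedding count of \cite{PS} (whose Theorem~6.1 also supplies Lemma~\ref{L: leading terms}), and the point is that it collapses to the closed form $2n\binom{2n+4}{5}$ matching the dimension computed above.

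Having verified the hypotheses of Proposition~\ref{P:10.1} (whose proof in turn invokes the rank theorem, Theorem~\ref{T:9.2}), we conclude that $\{u(\pi)\mathbf 1\mid\pi\in\mathcal{RR}\}$ is a basis of $L(\Lambda_0)$, which is the assertion of the theorem.
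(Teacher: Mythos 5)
Your reduction is exactly the paper's: invoke Proposition~\ref{P:10.1} with $k=1$, take $Q_3=U(\mathfrak g)q_{2\theta}\oplus U(\mathfrak g)q_{3\theta}\oplus U(\mathfrak g)q_{3\theta-\alpha^{\star}}$, compute $\dim Q_3(m)=2n\binom{2n+4}{5}$ from the Weyl dimension formula as in \eqref{E:11.1}--\eqref{tri sume}, and then match this against $\sum_{\pi\in\mathcal P^3(m)}N(\pi)$. Your preliminary analysis is also sound: the observation that $N(\pi)>0$ forces the three part-degrees to span at most $2$, yielding precisely the four shapes $(-j-1,-j,-j)$, $(-j-1,-j-1,-j)$, $(-j-2,-j-1,-j)$, $(-j-1)^3$, is exactly the case division (I), (II), (IIIa), (IIIb) used in the paper.

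The gap is that you stop at the point where the paper's proof actually begins. The identity $\sum_{\pi\in\mathcal P^3(m)}N(\pi)=2n\binom{2n+4}{5}$ is the entire content of the argument, and you assert rather than establish it: ``one enumerates \dots and the point is that it collapses to the closed form.'' The paper carries this out concretely --- for case (I) it splits the count into five subcases (I1)--(I5) according to which of the three two-element subpartitions are leading terms, converts each subcase into an explicit quadruple sum over positions in the re-indexed triangular scheme (using the $\triangle_r$ / ${}^r\!\triangle$ description of leading terms), and verifies that the five sums total $2n\binom{2n+4}{5}$; cases (II) and (III) are treated similarly. None of that bookkeeping appears in your write-up, and your fallback citation does not cover it: Theorem~6.1 of \cite{PS} supplies the classification of leading terms (Lemma~\ref{L: leading terms}), not the evaluation of the three-part embedding count, so the key hypothesis of Proposition~\ref{P:10.1} remains unverified. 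A secondary, smaller point: the claims that every leading term of $Q_3(m)$ has length exactly $3$ and that $\dim Q_3(m)$ equals the full sum of the three Weyl dimensions for all $m\le -3$ (i.e.\ that the loop-module components do not degenerate) are asserted by analogy with \cite{MP1}; the paper is equally terse here, so I would not count this against you, but it is worth being aware that it is not checked in detail in either text.
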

\begin{proof}
By Proposition \ref{P:10.1} and  (\ref{tri sume}) it is enough to show
\begin{equation}\label{sum P^3(n)}
\sum_{\pi\in\mathcal P^3(m)} N(\pi)  = 2n {2n+4\choose 5}\ .
\end{equation}
In order to simplify the counting of embeddings of leading terms we introduce a slightly different indexation of a triangular scheme for a basis $B$.  By using
\begin{equation}\label{reindexation}
k \mapsto  k \quad \underline{k} \mapsto  2n-k+1
\end{equation}
and matrix notation for rows and columns we can rewrite the basis $$B=\{x_{k,l} \mid k\in\{1,\cdots, 2n\}\ ,\ l\in\{1,\cdots , k\} \}\ .$$
We need to count embeddings in (\ref{sum P^3(n)}) for $m=-3j-1$, $-3j-2$ and $-3j-3$, that is, we need to
consider three cases:
\begin{itemize}
\item[(I)]$x_{k_1l_1}(-j-1)x_{k_2l_2}(-j)x_{k_3l_3}(-j)$ where $x_{k_2l_2}\preceq x_{k_3l_3}$
\item[(II)]$x_{k_1l_1}(-j-1)x_{k_2l_2}(-j-1)x_{k_3l_3}(-j)$ where $x_{k_1l_1}\preceq x_{k_2l_2}$
\item[(IIIa)]$x_{k_1l_1}(-j-2)x_{k_2l_2}(-j-1)x_{k_3l_3}(-j)$ 
\item[(IIIb)] $x_{k_1l_1}(-j-1)x_{k_2l_2}(-j-1)x_{k_3l_3}(-j-1)$ where $x_{k_1l_1}\preceq x_{k_2l_2}\preceq x_{k_3l_3}$\ .
\end{itemize}
Denote by $N$ the number of embeddings. During counting embeddings of leading terms we need to multiply the count by a factor $N-1$.
We describe calculation of the first case in all details.
\medskip\\
\emph{The first case} $x_{k_1l_1}(-j-1)x_{k_2l_2}(-j)x_{k_3l_3}(-j)$ where $x_{k_2l_2}\preceq x_{k_3l_3}$.\smallskip\\
Depending on the type and number of embeddings the first case is split in the following five subcases:
\begin{itemize}
\item[(I1)] $N=3$; $x_{k_1l_1}(-j-1)x_{k_2l_2}(-j)$, $x_{k_1l_1}(-j-1)x_{k_3l_3}(-j)$ and $x_{k_2l_2}(-j)x_{k_3l_3}(-j)$ are leading terms  (+ condition $x_{k_2l_2}\neq x_{k_3l_3}$)
\item[(I2)] $N=2$; $x_{k_1l_1}(-j-1)x_{k_2l_2}(-j)$, $x_{k_1l_1}(-j-1)x_{k_3l_3}(-j)$ and $x_{k_2l_2}(-j)x_{k_3l_3}(-j)$ are leading term  (+ condition $x_{k_2l_2} = x_{k_3l_3}$)
\item[(I3)] $N=2$; $x_{k_1l_1}(-j-1)x_{k_2l_2}(-j)$, $x_{k_1l_1}(-j-1)x_{k_3l_3}(-j)$ are leading terms and $x_{k_2l_2}(-j)x_{k_3l_3}(-j)$ not leading terms (+ condition $x_{k_2l_2}\neq x_{k_3l_3}$)
\item[(I4)] $N=2$; $x_{k_1l_1}(-j-1)x_{k_2l_2}(-j)$ not leading term, $x_{k_1l_1}(-j-1)x_{k_3l_3}(-j)$ and $x_{k_2l_2}(-j)x_{k_3l_3}(-j)$ are leading terms (+ condition $x_{k_2l_2}\neq x_{k_3l_3}$)
\item[(I5)] $N=2$; $x_{k_1l_1}(-j-1)x_{k_2l_2}(-j)$ is leading term, $x_{k_1l_1}(-j-1)x_{k_3l_3}(-j)$ not leading term and $x_{k_2l_2}(-j)x_{k_3l_3}(-j)$ is leading term (+ condition $x_{k_2l_2}\neq x_{k_3l_3}$)
\end{itemize}
\emph{Subcase} (I1):
Note that $x_{k_1l_1}(-j-1) \in\triangle_r$ and $x_{k_2l_2}(-j)\in{}\sp r\!\triangle$ and selection of their position is entirely free. Therefore, the  number of embedded leading terms in this subcase is given by
\begin{equation}\label{sumaI1}
\sum_{I1} = \sum_{k_1=1}^{2n}\sum_{l_1=1}^{k_1}\sum_{k_2=k_1}^{2n}\sum_{l_2=k_1}^{k_2} [(N-1)(\sharp \ x_{k_3l_3})]
\end{equation}
where $\sharp \ x_{k_3l_3}$ is number of admissible position for $x_{k_3l_3}$. Since $x_{k_2l_2}\preceq x_{k_3l_3}$ then $\sharp \ x_{k_3l_3} = (l_2-k_1)+[1+2+\cdots +(k_2-l_2)]$ (see Figure I1) and  the  sum (\ref{sumaI1}) is
\begin{equation}\label{sumaI1 final}
\sum_{I1}=\sum_{k_1=1}^{2n}\sum_{l_1=1}^{k_1}\sum_{k_2=k_1}^{2n}\sum_{l_2=k_1}^{k_2} [2(l_2-k_1)+(k_2-l_2)(k_2-l_2+1)]
\end{equation}

$$
\begin{array}{cccccccccccccc}
\cdot &&&&&&&&&&&&& \\
\cdot & \cdot &&&&&&&&&&&& \\
\cdot & \cdot & \cdot &&&&&&&&&&& \\
\cdot & \cdot & \cdot & \cdot  &&&&&&&&&& \\
\cdot & \cdot & \cdot & \cdot  & \cdot &&&&&&&&& \\
\cdot & \cdot & \cdot & \star  & \cdot & \cdot &&&&&&&& \\
&&&&& \cdot &\cdot &&&&&&& \\
&&&&& \cdot & \cdot & \bullet &&&&&& \\
&&&&& \cdot & \cdot & \bullet & \bullet &&&&& \\
&&&&& \cdot & \cdot & \bullet & \bullet & \bullet &&&& \\
&&&&& \cdot & \cdot & \bullet & \bullet & \bullet & \bullet &&& \\
&&&&& \bullet & \bullet & \ast & \cdot & \cdot & \cdot & \cdot && \\
&&&&& \cdot & \cdot & \cdot & \cdot & \cdot & \cdot & \cdot & \cdot & \\
&&&&& \cdot & \cdot & \cdot & \cdot & \cdot & \cdot & \cdot & \cdot & \cdot
\end{array}
$$
$$ Figure\ I1\ (\star \sim x_{k_1l_1};\ \ast\sim x_{k_2l_2};\ \bullet\sim x_{k_3l_3})$$
\emph{Subcase} (I2):
This subcase is similar as subcase (I1) for $N=2$ and $\sharp \ x_{k_3l_3} =1$. From this immediately follows
\begin{equation}\label{sumaI2 final}
\sum_{I2}=\sum_{k_1=1}^{2n}\sum_{l_1=1}^{k_1}\sum_{k_2=k_1}^{2n}\sum_{l_2=k_1}^{k_2}1\ .
\end{equation}
\emph{Subcase} (I3):
In this subcase we have again the same following setting
$$N=2\ ; x_{k_2l_2}\prec x_{k_3l_3}\ ;\ x_{k_1l_1}(-j-1) \in\triangle_r\ ;\ x_{k_2l_2}(-j)\in{}\sp r\!\triangle\ .$$
Since the $x_{k_2l_2}(-j)x_{k_3l_3}(-j)$ is not leading term then $\sharp \ x_{k_3l_3} = \frac{(l_2-k_1)(2k_2-k_1-l_2+1)}{2}$ (see Figure I3) and  the  sum $\sum_{I3}$ is
\begin{equation}\label{sumaI3 final}
\sum_{I3}=\sum_{k_1=1}^{2n}\sum_{l_1=1}^{k_1}\sum_{k_2=k_1}^{2n}\sum_{l_2=k_1}^{k_2} [\frac{(l_2-k_1)(2k_2-k_1-l_2+1)}{2}]\ .
\end{equation}

$$
\begin{array}{cccccccccccccc}
\cdot &&&&&&&&&&&&& \\
\cdot & \cdot &&&&&&&&&&&& \\
\cdot & \cdot & \cdot &&&&&&&&&&& \\
\cdot & \cdot & \cdot & \cdot  &&&&&&&&&& \\
\cdot & \cdot & \cdot & \cdot  & \cdot &&&&&&&&& \\
\cdot & \cdot & \cdot & \star  & \cdot & \bullet &&&&&&&& \\
&&&&& \bullet & \bullet &&&&&&& \\
&&&&& \bullet & \bullet & \bullet &&&&&& \\
&&&&& \bullet & \bullet & \bullet & \cdot &&&&& \\
&&&&& \bullet & \bullet & \bullet & \cdot & \cdot &&&& \\
&&&&& \bullet & \bullet & \bullet & \cdot & \cdot & \cdot &&& \\
&&&&& \cdot & \cdot & \cdot &  \ast & \cdot & \cdot & \cdot && \\
&&&&& \cdot & \cdot & \cdot & \cdot & \cdot & \cdot & \cdot & \cdot & \\
&&&&& \cdot & \cdot & \cdot & \cdot & \cdot & \cdot & \cdot & \cdot & \cdot
\end{array}
$$
$$ Figure\ I3\ (\star \sim x_{k_1l_1};\ \ast\sim x_{k_2l_2};\ \bullet\sim x_{k_3l_3})$$
\emph{Subcase} (I4):
In this subcase we have the  following setting
$$N=2\ ; x_{k_2l_2}\prec x_{k_3l_3}\ ;\ x_{k_1l_1}(-j-1) \in\triangle_r\ ;\ x_{k_2l_2}(-j)\in{}\sp r\!\triangle\ .$$
Since the $x_{k_1l_1}(-j-1)x_{k_3l_3}(-j)$ is not leading term then $\sharp \ x_{k_3l_3} = k_1 -1$ (see Figure I4) and  the  sum $\sum_{I4}$ is
\begin{equation}\label{sumaI4 final}
\sum_{I4}=\sum_{i_1=1}^{2n}\sum_{j_1=1}^{i_1}\sum_{i_2=i_1}^{2n}\sum_{j_2=i_1}^{i_2} [i_1-1]\ .
\end{equation}

$$
\begin{array}{cccccccccccccc}
\cdot &&&&&&&&&&&&& \\
\cdot & \cdot &&&&&&&&&&&& \\
\cdot & \cdot & \cdot &&&&&&&&&&& \\
\cdot & \cdot & \cdot & \cdot  &&&&&&&&&& \\
\cdot & \cdot & \cdot & \cdot  & \cdot &&&&&&&&& \\
\cdot & \cdot & \cdot & \star  & \cdot & \cdot &&&&&&&& \\
&&&&& \cdot & \cdot &&&&&&& \\
&&&&& \cdot & \cdot & \cdot &&&&&& \\
&&&&& \cdot & \cdot & \cdot & \cdot &&&&& \\
&&&&& \cdot & \cdot & \cdot & \cdot & \cdot &&&& \\
&&&&& \cdot & \cdot & \cdot & \cdot & \cdot & \cdot &&& \\
\bullet&\bullet&\bullet&\bullet&\bullet& \cdot & \cdot & \cdot &  \ast & \cdot & \cdot & \cdot && \\
&&&&& \cdot & \cdot & \cdot & \cdot & \cdot & \cdot & \cdot & \cdot & \\
&&&&& \cdot & \cdot & \cdot & \cdot & \cdot & \cdot & \cdot & \cdot & \cdot
\end{array}
$$
$$ Figure\ I4\ (\star \sim x_{k_1l_1};\ \ast\sim x_{k_2l_2};\ \bullet\sim x_{k_3l_3})$$
\emph{Subcase} (I5):
Since in this subcase $x_{k_1l_1}(-j-1)x_{k_2l_2}(-j)$ is not leading term then we select  entirely free the  position of $x_{k_1l_1}(-j-1) \in\triangle_r$ and $x_{k_3l_3}(-j)\in{}\sp r\!\triangle$. Then the  corresponding  setting is
$$N=2\ ; x_{k_2l_2}\prec x_{k_3l_3}\ ;\ x_{k_1l_1}(-j-1) \in\triangle_r\ ;\ x_{k_3l_3}(-j)\in{}\sp r\!\triangle\ .$$
Since the $x_{k_1l_1}(-j-1)x_{k_2l_2}(-j)$ is not leading term then $\sharp \ x_{k_2l_2} = (2n-k_3)(k_1 -1)$ (see Figure I5) and  the  sum $\sum_{I5}$ is
\begin{equation}\label{sumaI4 final}
\sum_{I5}=\sum_{k_1=1}^{2n}\sum_{l_1=1}^{k_1}\sum_{k_3=k_1}^{2n}\sum_{l_3=k_1}^{k_3} (2n-k_3)(k_1 -1)\ .
\end{equation}

$$
\begin{array}{cccccccccccccc}
\cdot &&&&&&&&&&&&& \\
\cdot & \cdot &&&&&&&&&&&& \\
\cdot & \cdot & \cdot &&&&&&&&&&& \\
\cdot & \cdot & \cdot & \cdot  &&&&&&&&&& \\
\cdot & \cdot & \cdot & \cdot  & \cdot &&&&&&&&& \\
\cdot & \cdot & \cdot & \star  & \cdot & \cdot &&&&&&&& \\
&&&&& \cdot & \cdot &&&&&&& \\
&&&&& \cdot & \cdot & \cdot &&&&&& \\
&&&&& \cdot & \cdot & \cdot & \cdot &&&&& \\
&&&&& \cdot & \cdot & \bullet & \cdot & \cdot &&&& \\
\ast&\ast&\ast&\ast&\ast&  \cdot & \cdot & \cdot & \cdot & \cdot & \cdot &&& \\
\ast&\ast&\ast&\ast&\ast& \cdot & \cdot & \cdot &  \cdot & \cdot & \cdot & \cdot && \\
\ast&\ast&\ast&\ast&\ast&  \cdot & \cdot & \cdot & \cdot & \cdot & \cdot & \cdot & \cdot & \\
\ast&\ast&\ast&\ast&\ast&  \cdot & \cdot & \cdot & \cdot & \cdot & \cdot & \cdot & \cdot & \cdot
\end{array}
$$
$$ Figure\ I5\ (\star \sim x_{k_1l_1};\ \ast\sim x_{k_2l_2};\ \bullet\sim x_{k_3l_3})$$
Finally we have
$$\sum_{I1}+\sum_{I2}+\sum_{I3}+\sum_{I4}+\sum_{I5} = 2n
{2n+4\choose 5}\ .$$\smallskip

In other two cases counting of embeddings of leading terms is similar and shows that (\ref{sum P^3(n)}) holds.
\end{proof}

\section{Combinatorial Rogers-Ramanujan type identities}

As a consequence of Theorem \ref{T: a basis of basic module}  we have a combinatorial Rogers-Ramanujan
type identities by using Lepowsky's product formula for principaly specialized characters of $C_n\sp{(1)}$-modules
$L(\Lambda_0)$ (see \cite{L} and \cite{M}, cf. \cite{MP2} for $n=1$)
\begin{equation}\label{E: conjecture 2}
\prod_{\substack{j\geq 1\\j\not\equiv 0\,\textrm{mod}\,2}}\!\!\frac{1}{1-q^j}
\prod_{\substack{j\geq 1\\j\not\equiv 0,\pm1\,\textrm{mod}\,n+2}}\!\!\frac{1}{1-q^{2j}}.
\end{equation}
This product can be interpreted combinatorially as a generating function for number of partitions
\begin{equation}\label{E: partition of m}
N=\sum_{m\geq 1}mf_{m}.
\end{equation}
of $N$ with parts $m$ satisfying  congruence conditions.
\begin{equation}\label{E: conjecture 4}
f_m=0\quad\text{if}\quad  m\equiv 0, \pm 2\,\textrm{mod}\,2n+4.
\end{equation}

On the other hand, in the principal specialization $e\sp{-\alpha_i}\mapsto q\sp1$, $i=0,1,\dots, n$, the sequence
of basis elements in $C_n\sp{(1)}$
\begin{equation}\label{E: conjecture 5}
X_{ab}(-1), \ ab\in B,\quad X_{ab}(-2),\  ab\in B,\quad X_{ab}(-3), \ ab\in B,\quad \dots
\end{equation}
obtains degrees
$$
|X_{ab}(-j)|=a+b-1+2n(j-1),
$$
where we prefer row and column indices of basis elements $X_{ab}\in B$ to be natural numbers
$$
b=1,\dots, 2n, \qquad a=1,\dots, b.
$$
For example, the basis elements for $C_2\sp{(1)}$ in the sequence (\ref{E: conjecture 5}) obtan degrees
\begin{equation}\label{E: conjecture 6}
\begin{array}{cccc}
1 &  &&   \\
2 & 3 & &  \\
3 & 4 & 5 & \\
4 & 5 & 6 & 7  \\
\end{array}
\begin{array}{cccc}
5 &  &&   \\
6 & 7 & &  \\
7 & 8 & 9 & \\
8 & 9 & 10 & 11  \\
\end{array}
\begin{array}{cccc}
9  &  &&   \\
10 & 11& &  \\
11 &  12& 13 & \\
12& 13 & 14& 15  \\
\end{array}
\dots
\end{equation}
As we see, there are several basis elements of a given degree $m$,
$$
m=a+b-1+2n(j-1),
$$
so we make them ``distinct'' by assigning to each degree $m$ a ``color'' $b$, the row index in which $m$ appears:
$$
m_b,\qquad |m_b|=m.
$$
For example, for $n=2$ we have
\begin{equation}
\begin{array}{cccc}
1_1 &  &&   \\
2_2 & 3_2 & &  \\
3_3 & 4_3 & 5_3 & \\
4_4 & 5_4 & 6_4 & 7_4  \\
\end{array}
\begin{array}{cccc}
5_1 &  &&   \\
6 _2& 7_2 & &  \\
7_3 & 8_3 & 9_3 & \\
8_4 & 9_4 & 10_4 & 11_4  \\
\end{array}
\begin{array}{cccc}
9_1  &  &&   \\
10 _2 & 11 _2& &  \\
11_3 &  12_3& 13_3 & \\
12_4& 13_4 & 14_4& 15_4  \\
\end{array}
\dots,
\end{equation}
so that numbers in the first row have color 1, numbers in the second row have color 2, and so on.
In general we consider a disjoint  union $\mathcal D_n$  of integers in $2n$ colors, say $m_1,  m_2, \dots,  m_{2m}$,
satisfying the congruence conditions
\begin{equation}\label{E: conjecture 8}
\begin{aligned}
&\{m_1\mid m\geq 1, m\equiv 1\,\textrm{mod}\,2n\}, \\
&\{m_2\mid m\geq 2, m\equiv 2,3\,\textrm{mod}\,2n\}, \\
&\{m_3\mid m\geq 3, m\equiv 3,4,5\,\textrm{mod}\,2n\}, \\
&\qquad\dots\\
&\{m_{2n}\mid m\geq 2n, m\equiv 2n,2n+1,\dots, 4n-1\,\textrm{mod}\,2n\} \\
\end{aligned}
\end{equation}
and arranged in a sequence of triangles.

For fixed $m$ and $b$ parameters $a$ and $j$ are completely determined. We see this easily for the last row
$$
2n_{2n},\dots,(4n-1)_{2n}; 4n_{2n},\dots,(6n-1)_{2n}; 6n_{2n},\dots \ ,
$$
and then for all the other rows as well. So instead of $m_b$ we may write $m_{ab}(-j)$.

\begin{theorem} For every positive integer $N$ the number of partitions
$$
N=\sum_{m\geq 1}mf_{m}
$$
with congruence conditions \
$f_m=0$ \ if \ $m\equiv 0, \pm 2\,\textrm{mod}\,2n+4$ \ equals the number of colored partitions
\begin{equation}\label{E: colored partitions}
N=\sum_{m_b\in\mathcal D_n}|m_b|f_{m_b}
\end{equation}
with difference conditions
\ $f_{m_b}+f_{m'_{b'}}\leq 1$ \ if
\begin{itemize}
\item  $m_b=m_{ab}(-j-1)$ and $m'_b=m'_{a'b'}(-j)$ such that $b\geq a'$, or
\item $m_b=m_{ab}(-j)$ and $m'_b=m'_{a'b'}(-j)$ such that $b\leq b'$, $a\geq a'$.
\end{itemize}
\end{theorem}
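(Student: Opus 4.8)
The plan is to obtain the identity by computing the principally specialized character of the basic module $L(\Lambda_0)$ of $\tilde{\mathfrak g}$ of type $C_n^{(1)}$ in two ways and equating the resulting power series in $q$; here ``principal specialization'' means $e^{-\alpha_i}\mapsto q$ for $i=0,1,\dots,n$.

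First I would feed Theorem~\ref{T: a basis of basic module} into the character argument of Proposition~\ref{P:10.1}: since the $\mathcal{RR}$-monomials form a basis of $L(\Lambda_0)$, formula (\ref{E:10.5}) at level one gives $\text{ch\,}L(\Lambda_0)=\sum_{\pi\in\mathcal{RR}}e^{\text{wt\,}\pi}$, where $\mathcal D$ and $\mathcal{RR}$ are built from the level one leading terms of Lemma~\ref{L: leading terms}. Applying the principal specialization together with the degree formula $|x_{ab}(-j)|=a+b-1+2n(j-1)$ from Section~12 — which I would derive from $|x_{ab}(-1)|=a+b-1$ (the height in the re-indexed scheme) and from the fact that $\delta$ has principal degree $2n$ for $C_n^{(1)}$, so $|x_{ab}(-j)|$ increases by $2n$ with each unit increase of $j$ — one checks that the assignment $x_{ab}(-j)\mapsto m_{ab}(-j)$, with $m=a+b-1+2n(j-1)$ and ``color'' $b$, is a degree- and color-preserving bijection from $\{x_{ab}(-j)\mid b=1,\dots,2n,\ a=1,\dots,b,\ j\ge1\}$ onto the colored set $\mathcal D_n$ cut out by the congruences (\ref{E: conjecture 8}); here one uses that for fixed $(m,b)$ the pair $(a,j)$ is uniquely recovered, as noted in the excerpt after (\ref{E: conjecture 8}). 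Because the order $\preceq$ on $\bar B$ compares first by degree and then by color, this bijection extends compatibly with $\preceq$ to a bijection of $\mathcal P_{<0}$ with colored partitions drawn from $\mathcal D_n$, and in particular carries embeddings to embeddings.

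Next I would translate the defining property of $\mathcal{RR}$ — that $\pi$ contains no leading term of $\bar R$ as a factor — into the difference conditions of the theorem. By Lemma~\ref{L: leading terms} the leading terms are exactly the equal-degree pairs $x_{a_1b_1}(-j)x_{a_2b_2}(-j)$ with $b_2\preceq b_1$, $a_2\succeq a_1$, and the consecutive pairs $x_{a_1b_1}(-j-1)x_{a_2b_2}(-j)$ with $b_1\succeq a_2$ (equivalently, by the remark following the lemma, with $x_{a_1b_1}\in\triangle_r$ and $x_{a_2b_2}\in{}^r\!\triangle$ for some $r$). A colored partition lies in $\mathcal{RR}$ iff no two of its parts form such a pair. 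Rewriting the order $\preceq$ and the triangles $\triangle_r$, ${}^r\!\triangle$ in the matrix re-indexation (\ref{reindexation}) and transporting everything along the bijection of the previous paragraph, this ``no forbidden pair'' requirement becomes precisely the statement that $f_{m_b}+f_{m'_{b'}}\le1$ whenever $m_b$ and $m'_{b'}$ satisfy one of the two bulleted conditions of the theorem. Hence the principal specialization of $\text{ch\,}L(\Lambda_0)$ equals $\sum_N c(N)q^N$, where $c(N)$ counts the colored partitions (\ref{E: colored partitions}) subject to those difference conditions.

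On the other side, Lepowsky's product formula for the principally specialized character of $L(\Lambda_0)$ of $C_n^{(1)}$ identifies this same specialization with the product (\ref{E: conjecture 2}). Expanding that product as a single Euler product $\prod_{m\in S}(1-q^m)^{-1}$, the odd-exponent factors supply every odd $m$, while the factors $(1-q^{2j})^{-1}$ with $j\not\equiv0,\pm1\pmod{n+2}$ supply exactly the even $m=2j$ with $2j\not\equiv0,\pm2\pmod{2n+4}$, so $S=\{m\ge1:m\not\equiv0,\pm2\pmod{2n+4}\}$ and the coefficient of $q^N$ is the number $p(N)$ of partitions $N=\sum_{m\ge1}mf_m$ with $f_m=0$ whenever $m\equiv0,\pm2\pmod{2n+4}$. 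Equating the two computations gives $p(N)=c(N)$, which is the assertion. The hard part is the combinatorial bookkeeping of the two middle paragraphs: getting the principal degree and the $2n$ residue families (\ref{E: conjecture 8}) exactly right, and — the most delicate point — verifying that the order conventions of Lemma~\ref{L: leading terms}, after the re-indexation and the row/column relabeling, turn verbatim into the two difference conditions without dropping any boundary case (equal parts, and the diagonal ``coroot'' colors $x_{i\underline{i}}$). The representation theory — that the $\mathcal{RR}$-monomials are a basis and that the specialized character is Lepowsky's product — is already in hand.
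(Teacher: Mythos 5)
Your proposal follows the paper's own route exactly: Section~12 \emph{is} this two-sided computation of the principally specialized character of $L(\Lambda_0)$ --- Theorem~\ref{T: a basis of basic module} together with the character formula (\ref{E:10.5}) giving the generating function for colored partitions in $\mathcal{RR}$ on one side, Lepowsky's product (\ref{E: conjecture 2}) rewritten as a single Euler product over $m\not\equiv 0,\pm2 \pmod{2n+4}$ on the other, linked by the dictionary $x_{ab}(-j)\mapsto m_{ab}(-j)$ with $m=a+b-1+2n(j-1)$ and the translation of Lemma~\ref{L: leading terms} into the two difference conditions. The delicate point you single out (tracking the inequality directions through the re-indexation $\underline{k}\mapsto 2n-k+1$) is indeed the one place demanding care: transporting $b_1\succeq a_2$ gives $b\leq a'$, which matches the paper's own description of the first condition via the triangles $\triangle_r$ and ${}^r\!\triangle$ in (\ref{E: conjecture 9}), so the inequality as printed in the theorem statement should be read accordingly.
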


For adjacent triangles corresponding to
$$
\dots,\quad X_{ab}(-j), ab\in B,\quad X_{ab}(-j-1), ab\in B,\quad\dots
$$
in  (\ref{E: conjecture 5}) and a fixed row $r\in\{1,\dots,2n\}$ we consider the corresponding two triangles: ${}\sp r\!\triangle$
on the left and $\triangle_r$
on the right. For example, for $n=2$ and the third row we have $r=3$ and two triangles denoted by bullets:
\begin{equation}\label{E: conjecture 9}
\dots
\begin{array}{cccc}
\cdot &  &&   \\
\cdot & \cdot & & \\
\cdot &  \cdot&  \cdot&  \\
 \cdot &  \cdot& \cdot & \cdot  \\
\end{array}
\begin{array}{cccc}
\cdot &  &&   \\
\cdot & \cdot & & \\
\cdot &  \cdot&  \bullet  &  \\
 \cdot &  \cdot& \bullet  & \bullet   \\
\end{array}
\begin{array}{cccc}
\bullet  &  &&   \\
\bullet  & \bullet  & & \\
\bullet  &  \bullet  &  \bullet  &  \\
 \cdot &  \cdot& \cdot & \cdot  \\
\end{array}
\begin{array}{cccc}
\cdot &  &&   \\
\cdot & \cdot & & \\
\cdot &  \cdot&  \cdot&  \\
 \cdot &  \cdot& \cdot & \cdot  \\
\end{array}
\dots
\end{equation}
are \ ${}\sp{3}\triangle$ on the left and $\triangle_{3}$ on
the right.

Then the first difference condition {\bf does not allow} two parts
in a colored partition (\ref{E: colored partitions}) such that
$$
m'_b=m'_{a'b'}(-j)\in {}\sp r\!\triangle\quad\text{and}\quad m_b=m_{ab}(-j-1)\in \triangle\sb r.
$$
On the other hand, the second difference condition {\bf does not allow} two parts
in a colored partition (\ref{E: colored partitions}) such that
$$
m'_b=m'_{a'b'}(-j),\quad m_b=m_{ab}(-j)
$$
to be in any rectangle such as:
\begin{equation}\label{E: conjecture 10}
\dots
\begin{array}{cccc}
\cdot &  &&   \\
\cdot & \cdot & & \\
\cdot &  \cdot&  \cdot&  \\
 \cdot &  \cdot& \cdot & \cdot  \\
\end{array}
\begin{array}{cccc}
\cdot &  &&   \\
\cdot & \cdot & & \\
\bullet &  \bullet&  \bullet  &  \\
 \bullet &  \bullet& \bullet  & \cdot   \\
\end{array}
\begin{array}{cccc}
\cdot  &  &&   \\
\cdot  & \cdot  & & \\
\cdot  &  \cdot  &  \cdot  &  \\
 \cdot &  \cdot& \cdot & \cdot  \\
\end{array}
\begin{array}{cccc}
\cdot &  &&   \\
\cdot & \cdot & & \\
\cdot &  \cdot&  \cdot&  \\
 \cdot &  \cdot& \cdot & \cdot  \\
\end{array}
\dots
\end{equation}



\begin{thebibliography}{FLM}

\bibitem[B]{B} R. E. Borcherds,
\textit{Vertex algebras, Kac-Moody algebras, and the Monster},
 Proc. Natl. Acad. Sci. USA  83 (1986), 3068--3071.

\bibitem [Bou]{Bou}    N. Bourbaki,
\textit{Alg\`ebre commutative}, Hermann, Paris, 1961.

\bibitem [C]{C}   S. Capparelli,
\textit{On some representations of twisted affine Lie algebras and
combinatorial identities}, J. Algebra  \textbf{154} (1993),
335--355.

\bibitem [DL]{DL}    C. Dong and J. Lepowsky,
\textit{Generalized vertex algebras and relative vertex
operators}, Progress in Mathematics \textbf{112}, Birkh\"{a}user,
Boston, 1993.

\bibitem [FF]{FF}
B. Feigin and E. Frenkel, \textit{Affine Kac-Moody algebras at the
critical level and Gelfand-Dikii algebras}, Intern. Jour. of
Modern Physics \textbf{A7},  Suppl. 1A (1992), 197--215.

\bibitem  [FHL]{FHL}
 I. B. Frenkel, Y.-Z. Huang and J. Lepowsky,
\textit{On axiomatic approaches to vertex operator algebras and
modules}, Memoirs of the Amer. Math. Soc. \textbf{104}, No. 494
(1993).

\bibitem [FLM]{FLM}
I. B. Frenkel, J. Lepowsky and A. Meurman,  \textit{Vertex
operator algebras and the Monster}, Pure and Applied Math.,
Academic Press, San Diego, 1988.

\bibitem [FZ]{FZ}
I. B. Frenkel and Y. Zhu, \textit{Vertex operator algebras
associated to representations of affine and Virasoro algebras},
Duke Math. J. \textbf{66} (1992), 123--168.

\bibitem [GL]{GL}
H. Garland and J. Lepowsky,  \textit{Lie algebra homology and the
Macdonald-Kac formulas}, Invent. Math. \textbf{34} (1976), 37--76.

\bibitem [H]{H}
T. Hayashi, \textit{Sugawara operators and Kac-Kazhdan
conjecture}, Invent. Math. \textbf{94} (1988),  13--52.

\bibitem [K]{K}
V. G. Kac, \textit{Infinite-dimensional Lie algebras} 3rd ed,
Cambridge Univ. Press, Cambridge, 1990.

\bibitem [KL]{KL}
M. Karel and H. Li,  \textit{Certain generating subspaces for
vertex operator algebras}, J. Algebra \textbf{217} (1999),
393--421.

\bibitem [LW]{LW}
J. Lepowsky and R. L. Wilson,  \textit{The structure of standard
modules, I: Universal algebras and the Rogers-Ramanujan
identities}, Invent. Math. \textbf{77} (1984),  199--290;
\textit{II: The case $A_1^{(1)}$, principal gradation}, Invent.
Math. \textbf{79} (1985),  417--442.

\bibitem [L]{L}
H.-S. Li, \textit{Local systems of vertex operators, vertex
superalgebras and modules}, J. of Pure and Appl. Alg. \textbf{109}
(1996), 143--195.

\bibitem [MP1]{MP1}
A. Meurman and M. Primc,  \textit{Annihilating fields of standard
modules of ${\mathfrak sl}(2,\mathbb C)\,\widetilde{}$ and
combinatorial identities}, Memoirs of the Amer. Math. Soc.
\textbf{137}, No. 652 (1999).

\bibitem [MP2]{MP2}
 A. Meurman and M. Primc,
\textit{A basis of the basic ${\mathfrak sl}(3,\mathbb
C)\,\widetilde{}$\,-module}, Commun. Contemp. Math. \textbf{3} (2001), 593--614.

\bibitem[M]{M} K.C.~Misra, {\em Realization of the level one standard
$\widetilde C_{2k+1}$-modules}, Trans. Amer. Math. Soc. \textbf{321}
(1990), 483-504.

\bibitem [P1]{P1} M. Primc,
\textit{Relations for annihilating fields of standard modules for affine Lie algebras},
Vertex Operator Algebras in Mathematics and Physics, Fields Institute Communications \textbf{39}, American Mathematical Society,
Providence R.I., (2003), 169-187.

\bibitem [P2]{P2} M. Primc,
\textit{Generators of relations for annihilating fields}, Kac-Moody
              Lie Algebras and Related Topics, Contemporary Mathematics
            \textbf{343}, American Mathematical Society, Providence R.I.,
            (2004), 229-241.

\bibitem [P\v{S}]{PS} M. Primc and T. \v Siki\' c,
\textit{Leading terms of relations for standard modules of affine Lie algebras $C_n\sp{(1)}$}, arXiv:1506.05026.

\bibitem [RW]{RW} A. Rocha-Caridi and N. R. Wallach,
\textit{Highest weight modules over graded Lie algebras:
resolutions, filtrations and character formulas}, Transactions of
the Amer. Math. Soc. \textbf{277} (1983), 133--162.

\bibitem [S]{S}
I. Siladi\'{c},  \textit{Twisted ${\mathfrak sl}(3,\mathbb
C)\,\widetilde{}$\,-modules and combinatorial identities}, arXiv:math/0204042.

\end{thebibliography}
\end{document}